\theoremstyle{plain}
\newtheorem{theorem}{Theorem}[section]
\newtheorem{lemma}[theorem]{Lemma}
\newtheorem{proposition}[theorem]{Proposition}
\newtheorem{observation}[theorem]{Observation}
\newtheorem{definition}[theorem]{Definition}
\newtheorem{claim}{Claim}[theorem]
\newenvironment{claimproof}{\noindent\textit{Proof of Claim \theclaim:}}{\hfill$\lrcorner$} 
\newcommand{\Ram}{\mathsf{Ram}}
\renewcommand{\epsilon}{\varepsilon}
\newcommand{\N}{\mathbb{N}}
\newcommand{\Oh}{\mathcal{O}}
\newcommand{\Ind}{\mathsf{Ind}}
\newcommand{\cC}{\mathcal{C}}
\newcommand{\col}[1]{{#1}-\textsc{Coloring}\xspace}
\newcommand{\lcol}[1]{\textsc{List} {#1}-\textsc{Coloring}\xspace}
\newcommand{\listcol}{\textsc{List} \textsc{Coloring}\xspace}
\newcommand{\sat}[1]{{#1}-\textsc{Sat}\xspace}
\newcommand{\wave}{
\begin{tikzpicture}[every node/.style={fill=black, circle, inner sep=0.5pt, outer sep = 0}, scale=0.2]
    \node (a1) at (0,0) {};
    \node (b1) at (1,0) {};
    \node (a2) at (2,0) {};    
    \node (b2) at (3,0) {};    
    \path[draw] (a1) to[bend left = 60] (a2);
    \path[draw] (b1) to[bend left = 60] (b2);
\end{tikzpicture}
}
\newcommand{\rainbow}{
\begin{tikzpicture}[every node/.style={fill=black, circle, inner sep=0.5pt, outer sep = 0}, scale=0.2]
    \node (a1) at (0,0) {};
    \node (b1) at (1,0) {};
    \node (b2) at (2,0) {};    
    \node (a2) at (3,0) {};    
    \path[draw] (a1) to[bend left = 60] (a2);
    \path[draw] (b1) to[bend left = 45] (b2);
\end{tikzpicture}
}
\newcommand{\jj}{
\begin{tikzpicture}[every node/.style={fill=black, circle, inner sep=0.5pt, outer sep = 0}, scale=0.2]
    \node (x) at (0,0) {};
    \node (y) at (1,0) {};
    \node (z) at (2,0) {};    
    \path[draw] (x) to[bend left = 45] (y);
    \path[draw] (x) to[bend left = 60] (z);
\end{tikzpicture}
}
\newcommand{\jjrev}{
\begin{tikzpicture}[every node/.style={fill=black, circle, inner sep=0.5pt, outer sep = 0}, scale=0.2]
    \node (x) at (0,0) {};
    \node (y) at (1,0) {};
    \node (z) at (2,0) {};    
    \path[draw] (y) to[bend left = 45] (z);
    \path[draw] (x) to[bend left = 60] (z);
\end{tikzpicture}
}
\newcommand{\lj}{_{\ell}\jj}
\newcommand{\jp}{\jj_{1}}
\newcommand{\pthree}{
\begin{tikzpicture}[every node/.style={fill=black, circle, inner sep=0.5pt, outer sep = 0}, scale=0.2]
    \node (x) at (0,0) {};
    \node (y) at (1,0) {};
    \node (z) at (2,0) {};    
    \path[draw] (x) to[bend left = 45] (y);
    \path[draw] (y) to[bend left = 45] (z);
\end{tikzpicture}
}
\newcommand{\ttriangle}{
\begin{tikzpicture}[every node/.style={fill=black, circle, inner sep=0.5pt, outer sep = 0}, scale=0.2]
    \node (x) at (0,0) {};
    \node (y) at (1,0) {};
    \node (z) at (2,0) {};    
    \path[draw] (x) to[bend left = 45] (y);
    \path[draw] (y) to[bend left = 45] (z);
    \path[draw] (x) to[bend left = 60] (z);
\end{tikzpicture}
}
\newcommand{\jw}{
\begin{tikzpicture}[every node/.style={fill=black, circle, inner sep=0.5pt, outer sep = 0}, scale=0.2]
    \node (x) at (0,0) {};
    \node (y) at (3,0) {};
    \node[draw=none,fill=none] (l1) at (-0.5,-0.2) {\footnotesize{$\ell$}};
     \node[draw=none,fill=none] (l1) at (1.5,-0.2) {\footnotesize{$\ell$}};
      \node[draw=none,fill=none] (l1) at (3.5,-0.2) {\footnotesize{$\ell$}};
    \path[draw] (x) to[bend left = 60] (y);
\end{tikzpicture}
}
\newcommand{\jwp}{
\begin{tikzpicture}[every node/.style={fill=black, circle, inner sep=0.5pt, outer sep = 0}, scale=0.2]
    \node (x) at (0,0) {};
    \node (y) at (3,0) {};
     \node[draw=none,fill=none] (l1) at (1.5,-0.2) {\footnotesize{$\ell$}};
    \path[draw] (x) to[bend left = 60] (y);
\end{tikzpicture}
}
\newcommand{\picJw}{
\begin{tikzpicture}[every node/.style={fill=black, circle, inner sep=0.9pt, outer sep = 0}, scale=0.2]
    \node (a1) at (0,0) {};
    \node (a2) at (6,0) {};    
    \path[draw] (a1) to[bend left = 60] (a2);
    \node[draw=gray,fill=none] (b1) at (-4,0) {};
    \node[draw=gray,fill=none] (b2) at (-3,0) {};
    \node[draw=gray,fill=none] (b3) at (-2,0) {};
    \node[draw=gray,fill=none] (b4) at (2,0) {};
    \node[draw=gray,fill=none] (b5) at (3,0) {};
    \node[draw=gray,fill=none] (b6) at (4,0) {};
    \node[draw=gray,fill=none] (b7) at (8,0) {};
    \node[draw=gray,fill=none] (b8) at (9,0) {};
    \node[draw=gray,fill=none] (b9) at (10,0) {};
    
\end{tikzpicture}
}
\newcommand{\picChordal}{
\begin{tikzpicture}[every node/.style={fill=black, circle, inner sep=0.9pt, outer sep = 0}, scale=0.2]
    \node (a1) at (0,0) {};
    \node (a2) at (3,0) {}; 
    \node (a3) at (6,0) {};
    \path[draw] (a1) to[bend left = 60] (a2);
    \path[draw] (a1) to[bend left = 60] (a3);
\end{tikzpicture}
}
\newcommand{\picChordalx}{
\begin{tikzpicture}[every node/.style={fill=black, circle, inner sep=0.9pt, outer sep = 0}, scale=0.2]
    \node (a1) at (0,0) {};
    \node (a2) at (3,0) {}; 
    \node (a3) at (6,0) {};
    \node (a4) at (-1,0) {};
    \path[draw] (a1) to[bend left = 60] (a2);
    \path[draw] (a1) to[bend left = 60] (a3);
    \node[draw=gray,fill=none] (b1) at (-4,0) {};
    \node[draw=gray,fill=none] (b2) at (-3,0) {};
    \node[draw=gray,fill=none] (b3) at (-2,0) {};
    \node[draw=none,fill=none] (x) at (10,0) {};
\end{tikzpicture}
}
\newcommand{\picChordaly}{
\begin{tikzpicture}[every node/.style={fill=black, circle, inner sep=0.9pt, outer sep = 0}, scale=0.2]
    \node (a1) at (0,0) {};
    \node (a2) at (3,0) {}; 
    \node (a3) at (6,0) {};
    \node (a4) at (7,0) {};
    \path[draw] (a1) to[bend left = 60] (a2);
    \path[draw] (a1) to[bend left = 60] (a3);
    \node[draw=gray,fill=none] (b1) at (-4,0) {};
    \node[draw=gray,fill=none] (b2) at (-3,0) {};
    \node[draw=gray,fill=none] (b3) at (-2,0) {};
    \node[draw=gray,fill=none] (b4) at (8,0) {};
    \node[draw=gray,fill=none] (b5) at (9,0) {};
    \node[draw=gray,fill=none] (b6) at (10,0) {};
\end{tikzpicture}
}
\newcommand{\mseven}{
\begin{tikzpicture}[every node/.style={fill=black, circle, inner sep=0.9pt, outer sep = 0}, scale=0.2]
    \node (a1) at (0,0) {};
    \node (b1) at (1.5,0) {};
    \node (b2) at (4.5,0) {};    
    \node (a2) at (6,0) {};    
    \path[draw] (a1) to[bend left = 60] (a2);
    \path[draw] (b1) to[bend left = 45] (b2);
    \node[draw=gray,fill=none] (c1) at (-4,0) {};
    \node[draw=gray,fill=none] (c2) at (-3,0) {};
    \node[draw=gray,fill=none] (c3) at (-2,0) {};
    \node[draw=gray,fill=none] (c4) at (8,0) {};
    \node[draw=gray,fill=none] (c5) at (9,0) {};
    \node[draw=gray,fill=none] (c6) at (10,0) {};
\end{tikzpicture}
}
\newcommand{\meight}{
\begin{tikzpicture}[every node/.style={fill=black, circle, inner sep=0.9pt, outer sep = 0}, scale=0.2]
    \node (a1) at (0,0) {};
    \node (b1) at (1.5,0) {};
    \node (d) at (3,0) {};
    \node (b2) at (4.5,0) {};    
    \node (a2) at (6,0) {};    
    \path[draw] (a1) to[bend left = 60] (a2);
    \path[draw] (b1) to[bend left = 45] (b2);
    \node[draw=gray,fill=none] (c1) at (-4,0) {};
    \node[draw=gray,fill=none] (c2) at (-3,0) {};
    \node[draw=gray,fill=none] (c3) at (-2,0) {};
    \node[draw=gray,fill=none] (c4) at (8,0) {};
    \node[draw=gray,fill=none] (c5) at (9,0) {};
    \node[draw=gray,fill=none] (c6) at (10,0) {};
\end{tikzpicture}
}
\newcommand{\msix}{
\begin{tikzpicture}[every node/.style={fill=black, circle, inner sep=0.9pt, outer sep = 0}, scale=0.2]
    \node (a1) at (0,0) {};
    \node (b1) at (3,0) {};
    \node (a2) at (6,0) {};    
    \node (b2) at (9,0) {};    
    \path[draw] (a1) to[bend left = 60] (a2);
    \path[draw] (b1) to[bend left = 60] (b2);
    \node[draw=gray,fill=none] (c1) at (-3,0) {};
    \node[draw=gray,fill=none] (c2) at (-2,0) {};
    \node[draw=gray,fill=none] (c3) at (-1,0) {};
    \node[draw=gray,fill=none] (c4) at (1,0) {};
    \node[draw=gray,fill=none] (c5) at (2,0) {};
    \node[draw=gray,fill=none] (c6) at (4,0) {};
    \node[draw=gray,fill=none] (c7) at (5,0) {};
    \node[draw=gray,fill=none] (c8) at (7,0) {};
    \node[draw=gray,fill=none] (c9) at (8,0) {};
    \node[draw=gray,fill=none] (c10) at (10,0) {};
    \node[draw=gray,fill=none] (c11) at (11,0) {};
    \node[draw=gray,fill=none] (c12) at (12,0) {};
\end{tikzpicture}
}
\date{}
\begin{document}

\title{List coloring ordered graphs with forbidden induced subgraphs}

\author[1]{Marta Piecyk}
\author[2,3]{Pawe{\l} Rz\k{a}\.zewski\thanks{Supported by the National Science Centre grant number 2024/54/E/ST6/00094.}}

\affil[1]{CISPA Helmholtz Center for Information Security}
\affil[2]{Warsaw University of Technology}
\affil[3]{University of Warsaw}

\maketitle

\begin{abstract}
In the \textsc{List $k$-Coloring} problem we are given a graph whose every vertex is equipped with a list, which is a subset of $\{1,\ldots,k\}$.
We need to decide if $G$ admits a proper coloring, where every vertex receives a color from its list.

The complexity of the problem in classes defined by forbidding induced subgraphs is a widely studied topic in algorithmic graph theory.
Recently, Hajebi, Li, and Spirkl [SIAM J. Discr. Math. 38 (2024)] initiated the study of  \textsc{List $3$-Coloring} in \emph{ordered graphs}, i.e., graphs with  fixed linear ordering of vertices.
Forbidding ordered induced subgraphs allows us to investigate the boundary of tractability more closely.

We continue this direction of research, focusing mostly on the case of  \textsc{List $4$-Coloring}.
We present several algorithmic and hardness results,
which altogether provide an almost complete dichotomy for classes defined by forbidding one fixed ordered graph: our investigations leave one minimal open case.
\end{abstract}

\section{Introduction}
Coloring is one of the best studied problems in graph theory, both from structural and from algorithmic point of view.
In this paper we are interested in the \emph{list} variant of the problem.
For a fixed integer $k$, an instance of the \lcol{$k$} problem is a pair $(G,L)$, where $G$ is a graph and $L$ is a function mapping each vertex of $G$ to a subset of $\{1,\ldots,k\}$ called \emph{list}.
We ask if $G$ admits a proper coloring where every vertex $v$ receives a color from its list $L(v)$.
Clearly, this problem is \textsf{NP}-hard for all $k \geq 3$ as it generalizes the \col{$k$} problem.
However, the presence of lists makes it hard even for very restricted instances, like bipartite graphs.

A popular approach for such problem is to explore the complexity on restricted instances, in order to understand the boundary of tractability.
One of typical sources of such restricted instances comes from considering graphs excluding certain substructures, most notably, as induced subgraphs.
For a graph $H$, we say that a graph $G$ is \emph{$H$}-free if it does not contain $H$ as an induced subgraph.

The complexity of \lcol{$k$} in $H$-free graphs is quite well-understood. Let us introduce some notation. For an integer $t$, by $P_t$ we denote the $t$-vertex path.
We use `$+$' to denote disjoint union of graphs, so $H_1 + H_2$ denotes the graph with two components, one isomorphic to $H_1$ and the other one to $H_2$.
We also write $rH$ to denote the disjoint union of $r$ copies of $H$.

Classic hardness reductions imply that if $H$ is not a linear forest (i.e., a forest of paths), then already \lcol{3} is \textsf{NP}-hard for $H$-free graphs~\cite{DBLP:journals/cpc/Emden-WeinertHK98,DBLP:journals/siamcomp/Holyer81a,DBLP:journals/jal/LevenG83}.
On the other hand, for every $k$, the \lcol{$k$} problem in $H$-free graphs is polynomial-time solvable if (a) $H$ is an induced subgraph of $rK_3$, for some $r$~\cite{DBLP:journals/corr/abs-2505-00412,DBLP:journals/combinatorica/ChudnovskyHS24,DBLP:journals/siamdm/HajebiLS22}, or (b) $H$ is an induced subgraph of $P_5 + rP_1$, for some $r$~\cite{DBLP:journals/algorithmica/HoangKLSS10,DBLP:journals/algorithmica/0001GKP15}.

For $k \geq 5$, all remaining cases are \textsf{NP}-hard~\cite{DBLP:journals/iandc/GolovachPS14,DBLP:journals/ejc/Huang16,DBLP:journals/algorithmica/0001GKP15}.
The \lcol{4} problem is \textsf{NP}-hard for $P_6$-free graphs~\cite{DBLP:journals/iandc/GolovachPS14,DBLP:journals/ejc/Huang16}, which leaves a number of open cases for disconnected $H$.

The case of \lcol{3} is much more elusive. 
It is known to be in \textsf{P} for $P_7$-free~\cite{DBLP:journals/combinatorica/BonomoCMSSZ18} and in $(P_6+rP_3)$-free graphs~\cite{DBLP:journals/algorithmica/ChudnovskyHSZ21}.
The general belief in the community is that \lcol{3} in $H$-free graphs should solvable in polynomial time for any linear forest $H$.
This belief is supported by the existence of a \emph{quasipolynomial-time} algorithm for all that cases~\cite{DBLP:conf/sosa/PilipczukPR21}.
This is a strong indication that the problem is not \textsf{NP}-hard. However, we seem to be very far from improving the quasipolynomial algorithm to a polynomial one.

Motivated by the notorious open case of $k=3$, Hajebi, Li, and Spirkl~\cite{DBLP:journals/siamdm/HajebiLS24} considered a slightly different setting.
An \emph{ordered graph} is a graph with a fixed linear order of vertices. For ordered graphs $G$ and $H$, we say that $H$ is an induced subgraph of $G$ if it can be obtained from $G$ by deleting vertices. Thus, the relative ordering of vertices of $H$ must coincide with the relative ordering of their images in $G$.
Now, the notion of $H$-free (ordered) graphs is a natural one: we forbid $H$ as an induced (ordered) subgraph.

We remark that this setting allows us to understand the distinction between easy and hard cases even better.
Indeed, excluding $H$ as an unordered induced subgraph is equivalent to excluding all possible orderings of $H$.
But what if we exclude just one, or few possible orderings?

As a motivating example, consider the case that $H = P_3$.
The class of unordered $P_3$-free graphs is very simple: every connected component of such a graph is a clique.
Consequently, \lcol{$k$} is polynomial-time-solvable for every $k$.

Now let us look at the ordered setting. Up to isomorphism, there are three possible orderings of $P_3$, i.e., $\jj$, $\jjrev$, and $\pthree$.
It turns out that forbidding at least one of $\jj$, $\jjrev$ leads to a problem that is in \textsf{P} for every fixed $k$.
(As observed by Hajebi et al.~\cite{DBLP:journals/siamdm/HajebiLS24}, the instances of such a problem are chordal, see~\cref{prop:chordal}).
However, \lcol{$k$} is \textsf{NP}-hard in $\pthree$-free ordered graphs for all $k \geq 3$~\cite{DBLP:journals/siamdm/HajebiLS24}.

The results obtained by Hajebi et. al~\cite{DBLP:journals/siamdm/HajebiLS24} are listed in \cref{tab:summary}.
Let us make a few comments about implications between results.
First, if $H$ is an unordered graph, then hardness for $H$-free graphs implies hardness when we exclude any ordering of $H$.
Second, since we consider the list problem,
an algorithm for \lcol{$k$} implies an algorithm for \lcol{$(k-1)$} and the hardness for \lcol{$(k-1)$} implies the hardness for \lcol{$k$}.
Third, if $H'$ is an induced subgraph of $H$,
then the algorithm for $H$-free graphs implies the algorithm for $H'$-free graphs,
and hardness for $H'$-free graphs implies hardness for $H$-free graphs.
Finally, if $H'$ is an ordered graph obtained from $H$ by reversing the ordering of vertices, the complexity in $H$-free and $H'$-free graphs is the same.

\begin{table}[t]
    \centering
    \begin{tabular}{|cccc|}
    \hline
         Forbidden $H$  & $k=3$ & $k=4$ & $k \geq 5$\\ \hline
           \picJw & \cellcolor{green!50} \cite{DBLP:journals/siamdm/HajebiLS24} & \cellcolor{green!50}  & \cellcolor{green!50} \cref{thm:poly-oneedge} \\
          \picChordal & \cellcolor{green!50}  & \cellcolor{green!50}  & \cellcolor{green!50} \cite{DBLP:journals/siamdm/HajebiLS24} $+$ folklore \\
          \picChordalx & \cellcolor{green!50} \cite{DBLP:journals/siamdm/HajebiLS24} & \cellcolor{green!50} \cref{thm:4colorsj16} & ? \\
         \picChordaly & \cellcolor{green!50} \cite{DBLP:journals/siamdm/HajebiLS24} & \cellcolor{red!50} \cref{thm:jp} & \cellcolor{red!50}  \\
          \mseven & ? & \cellcolor{red!50} \cref{thm:m7} & \cellcolor{red!50}  \\
          \meight & ? & \cellcolor{red!50}  & \cellcolor{red!50}  \\
          \msix & ? & ? & ? \\
          other & \cellcolor{red!50} \cite{DBLP:journals/siamdm/HajebiLS24} & \cellcolor{red!50}  & \cellcolor{red!50}  \\
         \hline
    \end{tabular}
    \caption{Complexity of \lcol{$k$} for $H$-free ordered graphs: state of the art. Green/red cells indicate that the problem is in \textsf{P}/\textsf{NP}-hard. Empty dots indicate vertices that might, but do not have to exist (the number of such vertices is arbitrary).}
    \label{tab:summary}
\end{table}

In this paper we consider the complexity of \lcol{$k$} for $k \geq 4$ in ordered $H$-free graphs.
Let us briefly discuss our results, see also \cref{tab:summary}.

We show that if $H$ has one edge, then \lcol{$k$} is polynomial-time-solvable in $H$-free graphs, for any fixed $k$; see \cref{thm:poly-oneedge}.
This extends earlier result of Hajebi et al.~\cite{DBLP:journals/siamdm/HajebiLS24} for $k=3$.
Even though our algorithm works for every $k$, it is significantly simpler and has better complexity bound.
Our approach is based on branching which allows us to obtain a family of ``cleaned'' instances that can be efficiently solved by dynamic programming.

We also show that \lcol{4} can be solved in polynomial time for graphs that exclude a graph consisting of a copy of $\jj$ preceded by an arbitrary number of isolated vertices; see \cref{thm:4colorsj16}. This algorithm is much more involved than the one from \cref{thm:poly-oneedge}  and is the main algorithmic contribution of the paper.
It consists of a few phases of branching that again lead us to a ``cleaned'' instance. In such an instance we remove certain edges.
We remark that in general such an operation is not safe in $H$-free graphs as it might take us outside the class.
However, we argue that in our case we can indeed do it.
Finally, we reduce the problem to solving \lcol{4} for a chordal graph, which can be done in polynomial time.

In stark contrast, in \cref{thm:jp} we show that if $H$ is a graph consisting of a copy of $\jj$, \emph{followed} by a single isolated vertex, then \lcol{4} is \textsf{NP}-hard in $H$-free graphs.
We remark that this is also where the complexity of \lcol{4} differs from the complexity of \lcol{3}. Indeed, the latter problem is in \textsf{P} in classes defined by forbidding any graph obtained by adding isolated vertices before and after a copy of $\jj$.

Finally, in \cref{thm:m7} we show that \lcol{4} is \textsf{NP}-hard for $\rainbow$-free graphs; this is one of the open cases for \lcol{3}.

We also look at the closely related \listcol problem, where the number of colors is not bounded; see \cref{thm:unbounded}. Unsurprisingly, this problem turns out to be \textsf{NP}-hard in $H$-free graphs for all graphs $H$ with at least three vertices. If $H$ has two vertices, the problem is actually trivial.

The paper is concluded with several open questions and possible directions for further research.

\section{Preliminaries}
For a positive integer $n$, by $[n]$ we denote the set $\{1,\ldots,n\}$.
For two positive integers $i,j$ such that $i<j$ by $[i,j]$ we denote $\{i,i+1,\ldots,j\}$.
For a set $X$, by $2^X$ we denote the set of all subsets of $X$, and by $\binom{X}{k}$, where $k \in N$, we denote the set of all $k$-element subsets of $X$.

For $k,\ell\in \N$, by $\Ram(k,\ell)$ we denote the \emph{Ramsey number of $k$ and $\ell$}, i.e., the minimum number $n$ such that every graph on $n$ vertices contains either a clique on $k$ vertices or an independent set on $\ell$ vertices. It is known that for every $k,\ell\in \N$, the number $\Ram(k,\ell)$ exists.

\paragraph{Ordered graphs.} An ordered graph $G$ is a graph given with a linear ordering of its vertices.
For two vertices $u,v$, we write $u \prec v$ if $u$ appears the ordering earlier than $v$.

We say that $u$ is a \emph{forward neighbor} (resp., \emph{backward neighbor}) of $v$ if $uv \in E(G)$ and 
$v \prec u$ (resp., $u \prec v$).
The set of forward (resp., backward) neighbors of $v$ is denoted by $N^+(v)$ (resp., $N^-(v)$).

\paragraph{Special graphs.} Let us define some special ordered graphs that will be crucial for us.
\begin{description}
\item[\jwp] For a non-negative integer $\ell$, the vertex set of \jwp consists of $\ell+2$ vertices $v_1,\ldots,v_{\ell+2}$, ordered so that $v_i\prec v_j$ for $i<j$, and the edge set consists of one edge $v_{1}v_{\ell+2}$.
\item[\jw] For a non-negative integer $\ell$, the vertex set of \jw consists of $3\ell+2$ vertices $v_1,\ldots,v_{3\ell+2}$, ordered so that $v_i\prec v_j$ for $i<j$, and the edge set consists of one edge $v_{\ell+1}v_{2\ell+2}$.
\item [\jj] The vertex set of \jj consists of three vertices $v_1,v_2,v_3$ ordered so that $v_1\prec v_2\prec v_3$, and the edge set consists of two edges $v_1v_2,v_1v_3$.
\item [$\jp$] The vertex set of $\jp$ consists of four vertices $v_1,v_2,v_3,v_4$ ordered so that $v_1\prec v_2\prec v_3\prec v_4$, and the edge set consists of two edges $v_1v_2,v_1v_3$.
\item[$\lj$] For a non-negative integer $\ell$, the vertex set of $\lj$ consists of $\ell+3$ vertices $v_1,\ldots,v_{\ell+3}$ ordered so that ordered so that $v_i\prec v_j$ for $i<j$, and the edge set consists of two edges $v_{\ell+1}v_{\ell+2},v_{\ell+1}v_{\ell+3}$.
\item [\rainbow] The vertex set of \rainbow consists of four vertices $v_1,v_2,v_3,v_4$, ordered so that $v_1\prec v_2\prec v_3\prec v_4$, and the edge set consists of two edges $v_1v_4,v_2v_3$.
\end{description}

 A graph $G$ is chordal if it contains no cycle on at least $4$ vertices as an induced subgraph. The following observation by Hajebi et al.~\cite{DBLP:journals/siamdm/HajebiLS24} will be useful.
\begin{proposition}[Hajebi et al.~\cite{DBLP:journals/siamdm/HajebiLS24}]\label{prop:chordal}
 A graph $G$ is \emph{chordal} if and only if there exists a linear ordering $\prec$ of $V(G)$ such that $G$ (as an ordered graph) is $\jj$-free.
\end{proposition}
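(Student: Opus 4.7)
The plan is to observe that $\jj$-freeness of an ordering is exactly the statement that for every vertex $v$ the set $N^{+}(v)$ of forward neighbors of $v$ induces a clique. Indeed, the forbidden pattern is a vertex $v_1$ together with two forward neighbors $v_2,v_3$ which are non-adjacent. This matches the classical notion of a \emph{perfect elimination ordering} (PEO), read from right to left, so the proposition is essentially a restatement of the Dirac--Fulkerson--Gross--Rose characterization of chordal graphs.

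For the forward direction, I would proceed by induction on $|V(G)|$. If $G$ is chordal, Dirac's theorem yields a simplicial vertex $v$, i.e., one whose neighborhood $N(v)$ is a clique. Place $v$ at the beginning of $\prec$; then $N^{+}(v) = N(v)$ is a clique, so no copy of $\jj$ can have $v$ as its leftmost vertex. Since $G - v$ is chordal, the inductive hypothesis provides a $\jj$-free ordering of $G-v$, and concatenating it after $v$ produces the required ordering of $G$: any hypothetical $\jj$ would be centered either at $v$ (impossible by the above) or entirely inside $G-v$ (impossible by induction).

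For the reverse direction, suppose $G$ admits a $\jj$-free ordering $\prec$ but contains, for contradiction, an induced cycle $C = u_1 u_2 \dots u_k u_1$ with $k \ge 4$. Let $u$ be the $\prec$-minimum vertex of $C$. Its two cycle-neighbors both lie in $N^{+}(u)$, yet they are non-adjacent in $G$ because $C$ is induced and has length at least $4$. These three vertices form an induced copy of $\jj$, contradicting the assumption on $\prec$. Hence $G$ has no induced cycle of length $\ge 4$, i.e., $G$ is chordal.

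There is no essential obstacle: the only care needed is to be consistent about the direction of the ordering (the convention of forward versus backward neighborhoods) so that ``$N^{+}(v)$ is a clique for all $v$'' corresponds cleanly to a PEO of the reversed ordering. Once this is fixed, both implications follow immediately from the standard theory of chordal graphs and need only a few lines each.
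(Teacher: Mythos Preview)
Your argument is correct: $\jj$-freeness of an ordering is precisely the condition that every forward neighborhood is a clique, which is the reverse of a perfect elimination ordering, and both directions then follow from the classical Dirac--Fulkerson--Gross characterization of chordal graphs. Note, however, that the paper does not supply its own proof of this proposition; it is quoted from Hajebi, Li, and Spirkl and simply cited, so there is no in-paper argument to compare against. Your write-up is exactly the standard justification one would give.
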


\paragraph{List coloring.} 
The instance of the \listcol problem is $(G,L)$, where $G$ is a graph and $L : V(G) \to 2^{\N}$ is a \emph{list function}.
We ask whether $G$ admits a proper coloring such that every vertex receives a color from its list.

For a fixed integer $k$, the \lcol{$k$} is a restriction of \listcol, where
every list is a subset of~$[k]$. 

The following result of Edwards~\cite{DBLP:journals/tcs/Edwards86} will be useful for us.

\begin{theorem}[Edwards~\cite{DBLP:journals/tcs/Edwards86}]\label{thm:edwards}
 For every $k\in \N$, there is a polynomial-time algorithm that solves every instance $(G,L)$ of \lcol{$k$} such that for every $v\in V(G)$, it holds $|L(v)|\leq 2$.
\end{theorem}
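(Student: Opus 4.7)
The plan is to reduce the problem to \textsc{2-Sat}, which is solvable in linear time by standard algorithms. First I would preprocess the instance: if some $L(v) = \emptyset$, reject; if $|L(v)| = 1$, fix the forced color of $v$ and remove it from the lists of its neighbors, repeating until every remaining vertex has list of size exactly $2$. This preprocessing is polynomial and preserves solvability; note that it can only shrink lists, so the invariant $|L(v)| \le 2$ is maintained.

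Next, for every remaining vertex $v$ with $L(v) = \{a_v, b_v\}$, introduce a Boolean variable $x_v$, interpreted as $x_v = \text{true}$ meaning ``color $v$ with $a_v$'' and $x_v = \text{false}$ meaning ``color $v$ with $b_v$''. For each edge $uv \in E(G)$ and each color $c \in L(u) \cap L(v)$, add the $2$-clause that forbids both endpoints from receiving color $c$ (e.g., if $c = a_u = a_v$, add $\neg x_u \vee \neg x_v$). Each edge contributes at most two clauses, so the resulting 2-CNF formula $\Phi$ has $|V(G)|$ variables and $O(|E(G)|)$ clauses.

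I would then verify the straightforward equivalence: a satisfying assignment of $\Phi$ decodes into a proper list coloring of $(G,L)$ (the clauses forbid exactly the monochromatic edges), and conversely any proper list coloring induces a satisfying assignment. Since 2-\textsc{Sat} can be solved in linear time (e.g., via Aspvall--Plass--Tarjan's strongly connected components algorithm on the implication graph), and the reduction itself is polynomial, the whole procedure runs in polynomial time for every fixed $k$ (in fact for unbounded $k$ as well, as long as lists have size at most $2$).

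The only subtlety, and thus the main ``obstacle,'' is the bookkeeping of the preprocessing step: one must make sure that after propagating singletons, either some list becomes empty (in which case the instance is a no-instance) or every list has exactly two elements, and that this is done in polynomial time. This is routine, since each propagation strictly decreases $\sum_v |L(v)|$ and costs time proportional to the degree of the propagated vertex.
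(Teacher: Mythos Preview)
Your proof is correct: the reduction to \textsc{2-Sat} is the standard argument, and all steps (preprocessing, clause generation, equivalence, running time) are sound.

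As for comparison with the paper, note that the paper does not actually prove this statement --- it is quoted as a known result of Edwards and simply cited. So there is no ``paper's own proof'' to compare against; your write-up supplies exactly the kind of self-contained argument one would expect here, and it matches the folklore proof attributed to Edwards.
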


\section{Polynomial-time algorithms}
In both algorithm presented in this section we will use two simple reduction rules.
Let $(G,L)$ be an instance of \lcol{$k$} for some $k$.
\begin{enumerate}[(R1)]
    \item If there is $v\in V(G)$ such that $L(v)=\emptyset$, then return \textsf{NO}.
    \item For $v\in V(G)$ such that $L(v)=\{a\}$, remove $a$ from lists of all neighbors of $v$, and remove $v$ from $G$.    
\end{enumerate}
Clearly, the reduction rules are safe and their exhaustive application can be performed in polynomial time.

\subsection{$H$-free graphs if $|E(H)|=1$}

In this section we prove the following result.
\begin{theorem}\label{thm:poly-oneedge}
Let $H$ be a fixed ordered graph with one edge.
For every $k$, the \lcol{$k$} problem is polynomial-time solvable in $H$-free ordered graphs.
\end{theorem}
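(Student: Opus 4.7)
Since $H$ has exactly one edge, it decomposes as: $a$ isolated vertices, followed by an edge endpoint $u^\star$, followed by $b$ isolated vertices, followed by the other endpoint $w^\star$, followed by $c$ isolated vertices, for some non-negative integers $a,b,c$ (so $|V(H)|=a+b+c+2$). Given an instance $(G,L)$ of \lcol{$k$} with $G$ being $H$-free, the plan is to combine a constant-depth branching with a dynamic-programming solver, following the ``preprocess, branch, then DP'' template announced in the introduction.

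The first step is to apply the reduction rules (R1) and (R2) exhaustively and to verify $\omega(G)\le k$ in time $n^{O(k)}$; if any list is emptied or a $K_{k+1}$ is detected, report \textsf{NO}. Now every remaining list has size between $2$ and $k$. The key structural observation, used in both the branching and the DP, uses $H$-freeness as follows: for every edge $uw\in E(G)$ with $u\prec w$, write $A(u,w)$, $B(u,w)$, $C(u,w)$ for the sets of vertices non-adjacent to both $u$ and $w$ lying, respectively, before $u$, strictly between $u$ and $w$, and after $w$. An induced copy of $H$ at $uw$ is precisely an independent set $I\subseteq A(u,w)\cup B(u,w)\cup C(u,w)$ with $|I\cap A(u,w)|\ge a$, $|I\cap B(u,w)|\ge b$, $|I\cap C(u,w)|\ge c$. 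Combined with $\omega(G)\le k$ via Ramsey's theorem, forbidding such $I$ strongly restricts these three regions once some ``extremal'' witness vertices have been fixed.

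The branching phase iteratively builds a set $S\subseteq V(G)$ of size $O_{H,k}(1)$ and guesses the colors of $S$, spawning $k^{|S|}$ sub-instances. The set $S$ is built by the following invariant: as long as there exists an edge $uw$ whose non-neighbor structure witnesses the heavy configuration ruled out by $H$-freeness, append carefully chosen representatives from the three regions to $S$, branch on their colors, and propagate via (R2). The Ramsey-based structural bound ensures the process terminates in constantly many steps. In each resulting cleaned instance, either (i) every list has size at most $2$ and \cref{thm:edwards} finishes the job, or (ii) the remaining graph has a bounded ``forward interface'' -- the set of vertices already visited that still have a neighbor to be visited -- which enables a standard left-to-right DP whose state is simply the coloring of this interface.

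The main obstacle is the precise formulation of the structural lemma that powers the branching. A naive attempt to bound the forward interface outright fails: for example, $K_{m,m}$ with one side preceding the other is $H$-free yet has an unbounded forward interface. Hence the cleaned notion must be engineered so that after branching either the interface is genuinely bounded, or else its effect on the future is captured by a polynomial-size equivalence summary of partial colorings. Making this work uniformly across all parameter regimes $(a,b,c)$, including the degenerate cases where some of $a,b,c$ vanish, is the principal technical content of the proof.
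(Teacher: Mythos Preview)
Your proposal is a plan rather than a proof, and the gap you yourself flag is real and unresolved. You correctly observe that a ``forward interface'' DP fails outright (your $K_{m,m}$ example), and you then promise an unspecified ``polynomial-size equivalence summary of partial colorings'' without saying what it is. Likewise, your branching step is described only as ``append carefully chosen representatives \ldots\ the Ramsey-based structural bound ensures the process terminates,'' with no concrete invariant, no termination argument, and no indication of what the cleaned instance actually guarantees. As written, nothing in the proposal pins down either the branching rule or the DP state, so there is no proof to check.

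The paper's argument is quite different from what you sketch and supplies exactly the missing pieces. First, it reduces to $H=\jw$ and inducts on $k$. The branching is \emph{not} edge-driven: one simply guesses, for each color $i\in[k]$, the first $\ell$ and the last $\ell$ vertices receiving color $i$ (after first disposing of the case where some color is used fewer than $2\ell$ times, which drops to \lcol{$(k-1)$}). The payoff of this guess is the key structural claim: in the resulting instance, for every $i$ the set $X_i$ of vertices whose list still contains $i$ induces a $\jwp$-free graph, because any such $\jwp$ could be padded on both sides by the guessed $A_i,B_i$ to form a $\jw$ in $G$. The DP state is then \emph{per color}, not per graph interface: for each $i$, record the last $\leq \ell$ vertices colored $i$ so far. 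When extending to a new vertex $v_j$ with color $t$, one only needs to check adjacency against these $\ell$ vertices, since any earlier $t$-colored neighbor $y$ of $v_j$ would, together with the $\ell$ intermediate $t$-colored vertices, induce a $\jwp$ inside $X_t$---a contradiction. This is precisely the ``equivalence summary'' you were looking for, and it does not require Ramsey, the clique bound $\omega(G)\le k$, or any edge-based iterative refinement.
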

\begin{proof}
Observe that every graph $H$ with one edge is an induced subgraph of \jw, for some $\ell \leq |V(H)|$. Thus, it is sufficient to consider the case that $H = \jw$ . We can also assume that $\ell \geq 1$, as for $\ell=0$ the problem is trivial: all instances are edgeless.

We proceed by induction on $k$; the case $k \leq 2$ is obvious.
Thus suppose that $k \geq 3$ and the claim holds for $k-1$.
Let $(G,L)$ be an $n$-vertex instance of \lcol{$k$} such that $G$ is \jw-free ordered graph.

\paragraph{Branching.}
We can first verify if there is a list coloring $c: (G,L)\to [k]$ such that some color $i\in [k]$ is used on at most $2\ell-1$ vertices; for every $i\in [k]$, and for every $A_i\subseteq V(G)$ of size at most $2\ell-1$, we create an instance $(G_1,L_1)$ from $(G,L)$ as follows:
\begin{enumerate}
    \item for every $v\in A_i$, we remove from $L(v)$ all colors but $i$,
    \item for every $v\in V(G)\setminus A_i$, we set its list to $L(v) \setminus \{i\}$,
    \item we exhaustively apply reduction rules (R1) and (R2).
\end{enumerate}
Note that now $(G_1,L_1)$ can be seen as an instance of \lcol{$(k-1)$}, and thus, it can be solved in polynomial time by the inductive assumption.

So since now we can assume that for every list coloring $c: (G,L)\to [k]$, every color $i\in [k]$ is used on at least $2\ell$ vertices.
Now we branch on the choice of first and last $\ell$ vertices in each color, i.e., for every $(2k)$-tuple $\mathsf{A}=(A_1,\ldots,A_k,B_1,\ldots,B_k)$ of pairwise disjoint sets, each of size $\ell$, where $A_i$ precedes $B_i$, we create from $(G,L)$ an instance $(G_{\mathsf{A}},L_{\mathsf{A}})$ as follows:
\begin{enumerate}
    \item for every $i\in [k]$, for $v\in A_i\cup B_i$, we remove from $L(v)$ all colors but $i$,
    \item for every $i\in [k]$, for every $v\in V(G)\setminus (A_i\cup B_i)$, if $v$ precedes the last vertex of $A_i$ or the first vertex of $B_i$ precedes $v$, then we remove $i$ from $L(v)$,
    \item we exhaustively apply reduction rules.
\end{enumerate}
Consider such an instance $(G_{\mathsf{A}},L_{\mathsf{A}})$.
Let $i\in [k]$, and let $X_i$ be the set of vertices $v$ in $G_{\mathsf{A}}$ with $i\in L_{\mathsf{A}}(v)$.

\begin{claim}\label{clm:xi-bridge-free}
For every $i\in [k]$, the graph $G_{\mathsf{A}}[X_i]$ is \jwp-free.
\end{claim}
\begin{claimproof}
Observe that $X_i$ is non-adjacent (in $G$) to $A_i\cup B_i$ since $i$ was not removed from the lists of $X_i$ by (R2).
Moreover, by the definition of $(G_{\mathsf{A}},L_{\mathsf{A}})$, the vertices of $A_i$ precede the vertices of $X_i$, which in turn precede the vertices of $B_i$.
Furthermore, since (R1) did not return \textsf{NO}, we have that the set $A_i\cup B_i$ is independent.
Therefore, if the graph $G_{\mathsf{A}}[X_i]$ contains an induced copy of \jwp, then $G$ contains an induced copy of \jw, a contradiction.
\end{claimproof}

\paragraph{Dynamic programming.} So since now, we assume that we are dealing with an instance  that satisfies the property from \cref{clm:xi-bridge-free}.
For simplicity, let us denote the current instance $(G,L)$, omitting  the subscript $\mathsf{A}$.

Let $v_1 \prec \ldots \prec v_n$ denote the vertices of $G$.
Moreover, for $j\in [n]$, we define $V_j=\{v_1,\ldots,v_j\}$.

Let $j \in [n]$, let $\mathcal{C}_j$ denote the family of all $k$-tuples of pairwise disjoint independent subsets of $V_j$, each of size at most $\ell$.
We say that a list coloring $c: (G[V_j],L)\to [k]$ is \emph{compatible with $(C_1,\ldots,C_k) \in \mathcal{C}_j$} if, for every $i\in [k]$, the following hold:
if $|C_i|\leq \ell-1$, then $c^{-1}(i)=C_i$,
and if $|C_i|=\ell$, then $C_i$ is the set of $\ell$ last vertices of $V_j$ colored with $i$.

For every $j\in [n]$, we will construct a table  $\mathsf{Tab}_j$ of entries, indexed by all elements of $\mathcal{C}_j$; note that the total size of of all tables is bounded by $n \cdot n^{kw}$.
We will set  $\mathsf{Tab}_j[(C_1,\ldots,C_k)]$ to true if and only if there exists a list coloring of $(G[V_j],L)$ that is compatible with $(C_1,\ldots,C_k)$.
Clearly, $(G,L)$ is a yes-instance of \lcol{$k$} if and only if $\mathsf{Tab}_n$ contains at least one true entry.
So it remains to show how to compute the entries efficiently.

First, we set $\mathsf{Tab}_1[C_1,\ldots,C_k]$ to true if and only if there is $i\in L(v_1)$ such that $C_i=\{v_1\}$ and $C_t=\emptyset$ for $t\neq i$.
So now assume that $j \geq 2$, we computed all entries for $\mathsf{Tab}_{j-1}$, and let $(C_1,\ldots,C_k) \in \mathcal{C}_j$.
If there is no $t\in L(v_{j})$ such that $v_{j}\in C_t$, we set $\mathsf{Tab}_{j}[C_1,\ldots,C_k]$ to false.
So now assume that such $t$ exists; clearly it is unique.
We set $\mathsf{Tab}_{j}[(C_1,\ldots,C_k)]$ to true if and only if at least one of following conditions is satisfied.
\begin{enumerate}
    \item $\mathsf{Tab}_{j-1}[(C_1,\ldots,C_{t}\setminus \{v_j\},\ldots,C_k)]$ is true.
    \item $|C_t|=\ell$ and there exists $x\in V_{j-1}$ such that: (i) $x$ precedes all the vertices of $C_t$, (ii) $x$ is non-adjacent to $C_t$, and (iii) for $C'_t=C_t\setminus \{v_j\}\cup \{x\}$, we have that $\mathsf{Tab}_{j-1}[(C_1,\ldots,C_{t-1},C'_{t},C_{t+1},\ldots,C_k]$ is true.
\end{enumerate}
Clearly, all table entries can be computed in polynomial time in $n$.

\paragraph{Correctness.} Let us verify that $\mathsf{Tab}_j$ satisfies the desired conditions.
We proceed by induction on $j\in [n]$.
For $j=1$, the conditions are clearly satisfied.
So now assume that $j \geq 2$ and the table $\mathsf{Tab}_{j-1}$ is filled properly.
Let $(C_1,\ldots,C_k) \in \mathcal{C}_j$.
Since, for each $i \in [k]$, the set $C_i$ is meant to contain the last $|C_i|$ vertices colored with $i$, then there always must be $t\in L(v_j)$ such that $v_j\in C_t$ as $v_j$ has to receive some color from $L(v_j)$ and it is the last vertex in $V_j$.

Suppose first that there exists a list coloring $c$ of $V_j$ that is compatible with $(C_1,\ldots,C_k)$.
If $|c^{-1}(t)|\leq \ell$, then $t$ is used by $c$ precisely $|c^{-1}(t)|-1<\ell$ times on $V_{j-1}$, and thus $\mathsf{Tab}_{j-1}[(C_1,\ldots,C_t\setminus\{v_j\},\ldots,C_k)]$ is true, as this value is certified by $c$ restricted to $V_{j-1}$.
In this case, we set $\mathsf{Tab}_j[(C_1,\ldots,C_t,\ldots,C_k)]$ to true, as desired.
If $|c^{-1}(t)|\geq \ell+1$, then $t$ is used by $c$ at least $\ell$ times on $V_{j-1}$.
Thus, for the set $C'_t$ of last $\ell$ vertices of $V_{j-1}$ colored with $t$ by $c$, the entry $\mathsf{Tab}_j[(C_1,\ldots,C'_t,\ldots,C_k)]$ is set to true.
Since $C'_t$ must be of the form $C'_t=C_t\setminus \{v_j\}\cup \{x\}$ for some $x\in V_{j-1}$, which precedes all vertices of $C_t$ and is non-adjacent to $C_t$, we set $\mathsf{Tab}_j[C_1,\ldots,C_t,\ldots,C_k]$ to true, as desired.

So now suppose that $\mathsf{Tab}_j[(C_1,\ldots,C_k)]$ is set to true, and let $t\in L(v_j)$ be such that $v_j\in C_t$.

Consider two cases, corresponding to two conditions in the algorithm.

\subparagraph{Case 1. $\mathsf{Tab}_{j-1}[(C_1,\ldots,C_t\setminus\{v_j\},\ldots,C_k)]$ is true.}
    By the inductive assumption, there exists a list coloring $c$ of $(G[V_{j-1}],L)$ compatible with $(C_1,\ldots,C_t\setminus\{v_j\},\ldots,C_k)$.
    Note that since $|C_t|\leq \ell$, we have $|C_t\setminus \{v_j\}|\leq \ell-1$, so $t$ is used fewer than $\ell$ times on $V_{j-1}$ and thus $C_t\setminus \{v_j\}$ is the set of all vertices of $V_{j-1}$ colored with $t$.
    Since we assumed that $C_t$ is an independent set, we can safely extend $c$ to $G[V_j]$ by setting $c(v_j)=t$.

\subparagraph{Case 2. $\mathsf{Tab}_{j-1}[(C_1,\ldots,C_t\setminus\{v_j\},\ldots,C_k)]$ is false.}
    Since we set  $\mathsf{Tab}_j[(C_1,\ldots,C_k)]$  to true, this means that  $|C_t|=\ell$ and there exists $x\in V_{j-1}$ which precedes all vertices of $C_t$ and is non-adjacent to $C_t$, and for $C'_t=C_\ell\setminus \{v_j\}\cup \{x\}$, the entry $\mathsf{Tab}_{j-1}[(C_1,\ldots,C'_{t},\ldots,C_k)]$ is  true.
    
    By the inductive assumption there exists a list coloring $c$ of $V_{j-1}$ compatible with $(C_1,\ldots,C'_{t},\ldots,C_k)$.
    We claim that we can extend $c$ to $G[V_j]$ by setting $c(v_j)=t$.

    Suppose it is not possible, i.e., there is $y\in V_{j-1}$ adjacent to $v_j$ such that $c(y)=t$.
    Since $C_t$ is an independent set and $x$ is non-adjacent to $C_t$ (so in particular to $v_j$), we have that $y\notin C'_t$.
    As $C'_t$ is the set of $\ell$ last vertices of $V_{j-1}$ colored with $t$, the vertex $y$ must precede all vertices of $C'_t$.
    Furthermore, since all the vertices of $C'_t\cup \{y\}$ received the same color, the set $C'_t \cup \{y\}$ must be independent.
    Therefore, the set $\{y\}\cup C'_t \cup \{v_j\}$ induces a copy of \jwp.
    Finally, since all the vertices of $\{y\}\cup C'_t \cup \{v_j\}$ contain $t$ on their list, they are all in $X_t$, which contradicts \cref{clm:xi-bridge-free}.
This completes the proof.
\end{proof}

\subsection[$J_{16}$-free graphs]{$\lj$-free graphs}
\begin{theorem}\label{thm:4colorsj16}
    For every fixed $\ell$, \lcol{4} can be solved in polynomial time on $\lj$-free ordered graphs.
\end{theorem}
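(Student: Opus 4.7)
My plan is to follow the three-phase strategy outlined in the introduction: first branch to produce a structurally restricted ``cleaned'' sub-instance; then delete certain edges to obtain a chordal ordered graph (possibly leaving the $\lj$-free class); and finally solve \lcol{4} on the resulting chordal graph. The third step is standard: a chordal graph admits a tree decomposition (its clique tree) whose bags are its maximal cliques, so if the maximum clique has size at most $4$ -- which must hold on any yes-instance of \lcol{4} -- its treewidth is at most $3$, and a bag-based dynamic programming solves \lcol{4} in polynomial time.

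For the branching phase I would mimic the approach from \cref{thm:poly-oneedge}, exploiting the fact that the $\ell$ extra vertices of $\lj$ all appear \emph{before} the $\jj$. For each color $i\in[4]$, I either enumerate the (at most $\ell-1$) vertices that receive color $i$, reducing to \lcol{3} on an $\lj$-free graph -- polynomial-time solvable by Hajebi et al.~\cite{DBLP:journals/siamdm/HajebiLS24} -- or guess the set $A_i$ of the first $\ell$ vertices to be colored $i$ in the given ordering. In the latter case I set $L(v):=\{i\}$ for every $v\in A_i$, delete $i$ from $L(v)$ for every $v$ that precedes the last vertex of $A_i$ but is not in $A_i$, and exhaustively apply reduction rules (R1) and (R2). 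This produces $n^{O(\ell)}$ sub-instances. In each of them, letting $X_i:=\{v : i\in L(v)\}$, the set $A_i$ is an independent set of size $\ell$ that entirely precedes $X_i\setminus A_i$ and, by (R2), is non-adjacent to $X_i\setminus A_i$. Hence any induced $\jj$ inside $G[X_i]$ would combine with $A_i$ to produce an induced $\lj$ in $G$, a contradiction; by \cref{prop:chordal}, $G[X_i]$ is chordal.

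To turn such a cleaned instance into a globally chordal graph I would first delete every edge $uv$ with $L(u)\cap L(v)=\emptyset$, which is always sound. After this step every remaining edge lies inside some $G[X_i]$, so for any induced $\jj$ $(v,b,c)$ that remains the two apex-edges $vb$ and $vc$ must lie in \emph{different} $G[X_i]$ and $G[X_j]$ with $i\neq j$ (because each $G[X_i]$ is chordal). Consequently $L(v)\cap L(b)\cap L(c)=\emptyset$, while both $L(v)\cap L(b)$ and $L(v)\cap L(c)$ are nonempty, and together with $|L(v)|,|L(b)|,|L(c)|\le 4$ this pins down a short list of possible patterns on any such $\jj$. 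The main obstacle -- and the second part of the plan -- is to show that for every remaining $\jj$ one of its apex-edges can be safely deleted, possibly after a further polynomial branching step guessing the color of a distinguished vertex of the $\jj$, while preserving list-colorability. The delicate point is that these deletions may take us outside the $\lj$-free class, so the structural guarantees from the first phase must be exploited at the moment of deletion rather than after; verifying that the final graph is chordal and that no spurious list colorings are introduced is where I expect the technical effort to concentrate.
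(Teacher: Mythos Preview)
Your first phase is exactly right and matches the paper: guess the first $\ell$ vertices in each color class (or fall back to \lcol{3} if a color is used at most $\ell-1$ times), and after reductions every induced $\jj$ on vertices $x\prec y\prec z$ satisfies $L(x)\cap L(y)\cap L(z)=\emptyset$. Your observation that each $G[X_i]$ is $\jj$-free is equivalent to this, and your subsequent deletion of edges with disjoint endpoint-lists is sound.

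The gap is everything after that. You write that the remaining $\jj$'s ``pin down a short list of possible patterns'' and that you hope to delete one apex-edge of each after ``a further polynomial branching step guessing the color of a distinguished vertex of the $\jj$'', but you give no mechanism, and this is where the real content of the theorem lies. Two concrete obstacles: first, there can be $\Theta(n^2)$ apex vertices of such $\jj$'s, so branching on one vertex per $\jj$ is exponential; second, deleting an apex-edge $vb$ drops the constraint $f(v)\neq f(b)$, and nothing in your setup guarantees this constraint is redundant. The paper does \emph{not} proceed by local edge deletions on arbitrary $\jj$'s. Instead it introduces substantial further machinery that your outline does not anticipate: a classification of vertices into ``good'' (at most $3\ell+15$ forward neighbors) and ``bad'' (list of size exactly $2$, many forward neighbors all with the complementary list), a Ramsey-type argument bounding non-adjacencies among bad vertices with different lists, and then three more branching phases. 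Phase~2 branches on the first $\Ram(5,\ell)$ good vertices and their forward neighbors, splitting the graph into a prefix $P$ of bad vertices and a suffix $S$ that is already $\jj$-free. Phase~3 branches inside $P$ so that at most two complementary two-element lists survive on $P$. Only in Phase~4, after one more branching on safe forward neighbors of designated sets, is an edge-deletion step performed, and it is restricted to edges with one endpoint in $P$ and the two endpoints having the complementary lists $\{1,2\}$ and $\{3,4\}$; chordality of the result is then argued using all the accumulated structure. None of these ideas is present in your proposal, and I do not see how to bypass them along the lines you sketch.
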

\begin{proof}
Let $G$ be an ordered $n$-vertex $\lj$-free graph, given with a list function $L: V(G)\to 2^{[4]}$.
We start with checking if $G$ contains a $K_5$ and, if so, we immediately reject the instance.
This can clearly be done in polynomial time by brute force.

First, suppose there is a list 4-coloring of $G$ such that for some $i \in [4]$, at most $\ell-1$ vertices receive color $i$.
We might exhaustively guess such an $i$ and the set of vertices colored $i$, remove them from the graph, and remove $i$ from the lists of all remaining vertices. This way we reduced the problem to solving a polynomial number of instances of \lcol{3}, each of which can be solved in polynomial time by using the result of Hajebi et al.~\cite[Theorem 22]{DBLP:journals/siamdm/HajebiLS24}, mentioned in \cref{tab:summary}.

So, from now on let us focus on looking for a list 4-coloring, where each color appears at least $\ell$ times. The algorithm consists of four main phases.

\paragraph{Phase 1.} For each $i \in [4]$, we exhaustively guess the first $\ell$ vertices that will receive color $i$. More precisely, for every 4-tuple $\mathcal{A}=(A_1,A_2,A_3,A_4)$ of pairwise disjoint independent sets, each of size $\ell$, we create an instance as follows.
    \begin{enumerate}
        \item For every $i \in [4]$ and every vertex $v\in A_i$, we set the list of $v$ to $L(v) \cap \{i\}$.
        \item For every $i\in[4]$, we remove $i$ from lists of all vertices not in $A_i$ that precede the last vertex of $A_i$.
        \item We exhaustively apply reduction rules.
    \end{enumerate}

Observe that as we assumed that in any list 4-coloring of $G$ there are at least $\ell$ vertices in each color, $(G,L)$ is a yes-instance if and only if at least one of the created instances is a yes-instance.
Furthermore, the number of branches in Phase 1 is at most $n^{4\ell}$, which is polynomial.
Consider one such branch for $\mathcal{A}=(A_1,A_2,A_3,A_4)$ and let $(G_1,L_1)$ be the current instance.
Before we proceed to Phase 2, let us analyze the properties of $(G_1,L_1)$.

\begin{claim}\label{claim:common-color}
 Let $x,y,z\in V(G_1)$ be such that $\{x,y,z\}$ induces a $\jj$, where $x \prec y \prec z$.
 Then $L_1(x)\cap L_1(y)\cap L_1(z)=\emptyset$.   
\end{claim}
\begin{claimproof}
    Suppose that there is $i\in[4]$ such that $i\in L_1(x)\cap L_1(y)\cap L_1(z)$.
    Note that then there are no edges between $A_i$ and $x,y,z$, as otherwise (R2) would remove $i$ from lists of $x,y,z$.
    Moreover all vertices of $A_i$ precede $x,y,z$ as otherwise $i$ would also be removed from the lists.
    Thus, $A_i\cup\{x,y,z\}$ induces a $\lj$ in $G$, a contradiction.
\end{claimproof}

Let us classify forward neighbors of a vertex $v$.
We say that a forward neighbor $u$ of $v$ is  \emph{safe (in $(G_1,L_1)$)} if $L_1(u)\cap L_1(v)
\neq \emptyset$, and otherwise it is \emph{dangerous}.
Now, for every vertex we can bound the number of its safe forward neighbors.

\begin{claim}\label{claim:good-neighbors}
Let $v\in V(G_1)$ and let $i \in L_1(v)$.
Then $v$ has at most $3$ forward neighbors whose list contains $i$.
Consequently, $v$ has at most $12$ safe forward neighbors.
\end{claim}
\begin{claimproof}
    Fix $i \in L_1(v)$.
    By \cref{claim:common-color}, for any two forward neighbors $u,w$ of $v$ with $i \in L_1(u) \cap L_1(w)$, we have $uw\in E(G_1)$.

    On the other hand, as $G$ -- and thus $G_1$ as an induced subgraph of $G$ -- is $K_5$-free, there is no $K_4$ in the neighborhood of $v$.
    Therefore, $v$ can have at most three forward neighbors whose list contains $i$.

    Summing up over all choices of $i$, we conclude that the total number of forward neighbors $u$ of $v$ such that $L_1(u)\cap L_1(v)\neq \emptyset$ is at most $12$, which completes the proof of the claim.
\end{claimproof}

We say that a vertex $v$ is \emph{bad} if it has at least $3\ell+4$ dangerous forward neighbors.
If $v$ is not bad, then it is \emph{good}.
Let $\mathsf{Bad}$ and $\mathsf{Good}$ denote, respectively, the sets of bad and good vertices.

Let us make a few comments about bad and good vertices.
First, every $v \in \mathsf{Bad}$ has list of size $2$.
Indeed, the reduction rules assert that there are no vertices with lists of size at most one, and if the list of $v$ has at least three elements, then there are no vertices with lists disjoint with $L_1(v)$ (and in particular $v$ has no dangerous forward neighbors).

Second, for analogous reasons, all dangerous forward neighbors of a bad vertex $v$ have the same list, i.e., $[4] \setminus L_1(v)$.
 
Finally, if $v$ is good, \cref{claim:good-neighbors} implies that $v$ has at most $12+(3\ell+3)=3\ell+15$ forward neighbors.

\begin{claim}\label{claim:bad-non-neighbors}
    Let $v$ be a bad vertex.
    Let $B$ be the set of these $u \in \mathsf{Bad}$ with $u\prec v$ for which $L_1(u)\neq L_1(v)$.
    Then $v$ is non-adjacent to at most $\Ram(5,\ell)-1$ vertices of $B$.
\end{claim}
\begin{claimproof}
    Suppose there is a subset $B'\subseteq B$ such that $v$ is non-adjacent to $B'$ and $|B'|=\Ram(5,\ell)$.
    
    Since $v$ is bad, it has at least $3\ell+4$ dangerous forward neighbors, and all of them have list $K = [4] \setminus L_1(v)$.    
    Since $G$ is $K_5$-free, there is an independent set $B'' \subseteq B'$ of size $\ell$.
    
    Note that for any $u\in B''$, we have $L_1(u) \cap K \neq \emptyset$ since $L_1(u)\neq L_1(v)$ and both $L_1(u)$ and $K$ are of size $2$.    
    Therefore, by \cref{claim:good-neighbors}, $u$ can have at most $3$ forward vertices with list $K$.
    Summing up over all vertices of $B''$, the neighborhood of $B''$ among dangerous forward neighbors of $v$ is of size at most $3\ell$.
    Consequently, there exists a set $X$ of $4$ forward neighbors of $v$ non-adjacent to any vertex of $B''$ (see \cref{fig:algoJ16}). 
    Since we do not have $K_5$ in $G$, there is no $K_4$ in $X$, and thus there is at least one pair of vertices $x,y\in X$ such that $xy\notin E(G)$.
    But then $B''\cup \{v,x,y\}$ induces a copy of $\lj$ in $G$, a contradiction.
\end{claimproof}

\begin{center}
\begin{figure}[t]
    \centering
   \begin{tikzpicture}[every node/.style={draw,circle,fill=white,inner sep=0pt,minimum size=8pt},every loop/.style={}]

\draw (0,-0.5)--(4,-0.5)--(4,0.5)--(0,0.5)--(0,-0.5);
\node[draw=none,fill=none] (b) at (2,0) {$B''$};

\node[fill=blue,opacity=0.5,label=below:\footnotesize{$v$}] (v) at (6,0) {};

\foreach \k in {0,1,2,3,4,5,6}
{
\node[fill=orange,opacity=0.5] (x\k) at (\k+8,0) {};
\draw (v) to [bend left] (x\k);
}

\foreach \k in {0,1,2,3,4}
{
\foreach \j in {0,2,3,6}
{
\draw[color=red] (2,0.5) to [bend left] (x\j);
}
}
\draw[color=red] (2,0.5) to [bend left] (v);

\node[fill=blue,opacity=0.5,label=right:\footnotesize{$\{1,2\}$}] (l1) at (0,2.5) {};
\node[fill=orange,opacity=0.5,label=right:\footnotesize{$\{3,4\}$}] (l1) at (0,2) {};

\end{tikzpicture}
    
     \caption{Bad vertex $v$, its dangerous forward neighbors, and independent set $B''$ of bad vertices which: precede $v$, are non-adjacent to $v$, and have list other than $L(v)$. There are at least $4$ dangerous forward neighbors of $v$ non-adjacent to $B''$. Red edges denote non-edges.}
    \label{fig:algoJ16}
\end{figure}
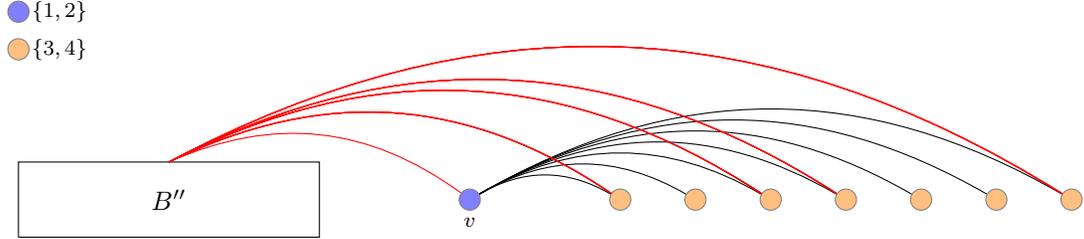
\end{center}

\paragraph{Phase 2.} We proceed further with the instance $(G_1,L_1)$.
We consider two cases depending on the size of $\mathsf{Good}$.
First, if $|\mathsf{Good}| < \Ram(5,\ell)$, then we can exhaustively guess coloring on $\mathsf{Good}$.
More precisely, for each $c: \mathsf{Good} \to [4]$ we create a corresponding instance  by setting the list of every $v\in \mathsf{Good}$ to $L_1(v) \cap \{ c(v)\}$ and then exhaustively applying reduction rules.

The number of instances created this way is at most $4^{\Ram(5,\ell)-1}$, which is a constant as $\ell$ is fixed.
Since every bad vertex has list of size $2$, in any produced instance, every vertex must have list of size $2$.
Here we emphasize that the vertices in the newly created instances might become good, but they were bad before the guessing and their lists could only shrink.
Therefore, applying  \cref{thm:edwards}, we can solve each instance in polynomial time. This completes the proof in this case.

So since now, we can assume that $|\mathsf{Good}| \geq \Ram(5,\ell)$.
Let $D$ be the set of first $\Ram(5,\ell)$ good vertices.
We exhaustively guess the coloring of $D\cup N^+(D)$, i.e., for every $c: D \cup N^+(D) \to [4]$, we create a corresponding instance  by setting the list of every $v \in D\cup N^+(D)$ to $L_1(v) \cap \{c(v)\}$ and applying reduction rules exhaustively. 

Recall that every good vertex has at most $3\ell+15$ forward neighbors, and thus the size of $D\cup N^+(D)$ is bounded by $|D|+|D|\cdot(3\ell+15)=\Ram(5,\ell)\cdot (3\ell+16)$. Therefore, the number of instances created in Phase 2 is at most $4^{\Ram(5,\ell)\cdot (3\ell+16)}$, which is again constant for fixed $\ell$.

Consider one such instance and denote it by $(G_2,L_2)$.
We partition the vertex set of $G_2$ into sets $P,S$ called \emph{prefix} and \emph{suffix} so that in $P$ are all vertices that in $G_1$ precede the last vertex of $D$, and $S$ contains the remaining vertices of $G_2$.
Observe that all vertices in $P$ were bad at the end of Phase 1, i.e., before we guessed a coloring on $D \cup N^+(D)$.
In the instance $(G_2,L_2)$ they do not have to be bad, but we do not care if they are bad now.
We will only use the facts that (i) they have lists of size 2 and (ii)  \cref{claim:bad-non-neighbors} holds for all vertices of $P$. Indeed, note that reduction rules did not increase the size of lists or the number of non-neighbors.

Moreover, observe that the graph induced by $S$ is $\jj$-free.
Indeed, since all vertices of $S$ were not removed by reduction rules applied after the branching in Phase 2, there are no edges between $D$ and $S$.
Moreover, since $G$ has no $K_5$, there is an independent set $D'\subseteq D$ of size $\ell$.
Consequently, if $G[S]$ contained an induced $\jj$, then, together with $D'$, it would form an induced $\lj$ in $G$, a contradiction.
We proceed to the third phase.

\paragraph{Phase 3.} 
Let $(G_2,L_2)$ be an instance produced in Phase 2.
For $X  \in \binom{[4]}{2}$, by $P_X$ we denote the set of vertices of $P$ with list $X$; recall that each vertex of $P$ is bad and thus has list of size $2$.
If, for some $X$, the set $P_X$ does not induce a bipartite graph, we can immediately terminate the current call and reject.
So assume that, for each $X$, the graph $G[P_X]$ is bipartite.
We consider two cases.
    
If $|P_X|< 2\ell$, then we exhaustively guess the coloring of $P_X$, i.e., for every $c: P_X \to X$, we create a corresponding instance by setting the list of every $v \in P_X$ to $L_2(v) \cap \{c(v)\}$ and applying reduction rules exhaustively.
The number of instances created in this case is at most $2^{2\ell-1}$, i.e., a constant.

Now let us assume that $|P_X|\geq 2\ell$.
Denote $X = \{i,j\}$, and suppose that we are dealing with a yes-instance, i.e., there is a coloring $c$ of $G_2$ respecting lists $L_2$.
Observe that  one of the following holds.
\begin{enumerate}[(C1)]
    \item For some $\iota \in \{i,j\}$, at most $\ell-1$ vertices of $P_X$ receive color $\iota$ in $c$.
    \item Each of colors $i,j$ appears at least $\ell$ times on $P_X$ in $c$.
\end{enumerate}
We create the following instances, corresponding to the cases above.
\begin{enumerate}[({I}1)]
\item For every $\iota \in X$ and every set $U \subseteq P_X$ of size at most $\ell-1$, we create a corresponding instance by setting the list of every vertex $v\in U$ to $L_2(v) \cap \{\iota\}$, and the list of every vertex $v\in P_X\setminus U$ to $L_2(v)\setminus \{\iota\}$.
\item For every pair $(U_{X,i},U_{X,j})$, where $U_{X,i},U_{X,j}$ are disjoint independent sets contained in $P_X$, each of size $\ell$, we create a corresponding instance in the following way:
            (i) for every vertex $v\in U_{Xi}$, we set its list to $L_2(v) \cap \{i\}$,
            (ii) for every vertex $v\in U_{X,j}$, we set its list to $L_2(v) \cap \{j\}$, and
            (iii) for every vertex $u$ of $P_X \setminus U_{X,i}$ preceding the last vertex of $U_{X,i}$, we remove $i$ from $L_2(u)$, and
            (iv) and for every vertex $u$ of $P_X\setminus U_{X,j}$ preceding the last vertex of $U_{X,j}$,  we remove $j$ from $L_2(u)$. 
\end{enumerate}  
We exhaustively apply reduction rules to all created instances.
The number of instances created in this case is at most $n^{2\ell}$.

The intended role of the set $U_{X,i}$ (resp., $U_{X,j}$) in (I2) is that it contains the first $\ell$ vertices of $P_X$ colored $i$ (resp., $j$).
If $(G_2,L_2)$ is a yes-instance, and there is coloring satisfying (C1), then at least one of the instances created in (I1) is a yes-instance, and if there is a coloring satisfying (C2), then  at least one of the instances created in (I2) is a yes-instance. 
Thus, if $(G_2,L_2)$ is a yes-instances, then at least one of the instances created in Phase 3 is a yes-instance.
The total number of instances created in this phase is at most $\binom{4}{2} \cdot n^{2\ell}$, i.e., polynomial in $n$.

Let us analyze an instance $(G_3,L_3)$ created in Phase 3.
Recall that all vertices of $P \cap V(G_3)$ have lists of size 2.

\begin{claim}\label{claim:two-lists}
    For any two vertices $v,v' \in P \cap V(G_3)$, either $L_3(v) = L_3(v')$ or $L_3(v) \cap L_3(v') = \emptyset$.
    In particular, at most two distinct lists might appear among the vertices of $P$ in $(G_3,L_3)$.
\end{claim}
\begin{claimproof}
For contradiction suppose otherwise.
By symmetry, assume that $P_{\{1,2\}} \cap V(G_3) \neq \emptyset$ and $P_{\{1,3\}} \cap V(G_3) \neq \emptyset$.

Note that this means that the considered instance was created in a branch corresponding to (I2), both for $X = \{1,2\}$ and for $X = \{1,3\}$.
In particular, we have defined sets $U_{\{1,2\},1} \subseteq P_{\{1,2\}}$ and $U_{\{1,3\},1} \subseteq P_{\{1,3\}}$.
Let $u_{\{1,2\}}$ (resp., $u_{\{1,3\}}$) be the last vertex from $U_{\{1,2\},1}$ (resp.,  $U_{\{1,3\},1}$). 
Note that, since the call was not terminated by (R1), we have $1 \in L_3(u_{\{1,2\}}) \cap L_3(u_{\{1,3\}})$ and these two vertices were removed from the graph by (R2).

Pick any $v_{\{1,2\}} \in P_{\{1,2\}} \cap V(G_3)$ and $v_{\{1,3\}} \in P_{\{1,3\}} \cap V(G_3)$.
As $v_{\{1,2\}}$ was not removed by reduction rules, we have $L_3(v_{\{1,2\}}) = \{1,2\}$.
We claim that $v_{\{1,2\}} \prec u_{\{1,3\}}$.
Suppose otherwise, i.e., all vertices from $U_{\{1,3\}}$ precede $v_{\{1,2\}}$.
By \cref{claim:bad-non-neighbors}, $v_{\{1,2\}}$ is non-adjacent to at most $\Ram(5,\ell)-1$ vertices with list $\{1,3\}$, so in particular, there is $u \in U_{\{1,3\},1}$ which is adjacent to $v_{\{1,2\}}$.
Recall that in the branch defined in (I2) we set the list of $u$ to  $L_3(u) \cap \{1\}$, and since (R1) did not terminate the call, (R2) removed $1$ from the list of $v_{\{1,2\}}$, a contradiction.
Thus, by symmetry, we obtain
\begin{equation} 
    \begin{split} \label{eq:part1}
        v_{\{1,2\}} & \prec u_{\{1,3\}} \\
        v_{\{1,3\}} & \prec u_{\{1,2\}}.
    \end{split}
\end{equation}

Note that if $v_{\{1,2\}} \prec u_{\{1,2\}}$, then condition (iii) or (iv) in (I2) would remove 1 from the list of $v_{\{1,2\}}$. So, again using symmetry, we conclude that
\begin{equation} 
    \begin{split} \label{eq:part2}
        u_{\{1,2\}} & \prec v_{\{1,2\}} \\
        u_{\{1,3\}} & \prec v_{\{1,3\}}.
    \end{split}
\end{equation}
However, inequalities \eqref{eq:part1} and \eqref{eq:part2} are contradictory, so the claim indeed holds.
\end{claimproof}

We proceed to Phase 4.
We emphasize that sets $P$ and $\textsf{Good}$ are not redefined with respect to the current instance.

\paragraph{Phase 4.} Let $(G_3,L_3)$ be an instance given by Phase 3.
By \cref{claim:two-lists}, there are at most two distinct lists on $P\cap V(G_3)$, and by symmetry, we can assume that every vertex of $P$ in $(G_3,L_3)$ has list either $\{1,2\}$ or $\{3,4\}$.
Furthermore, if for $X=\{i,j\}\in \{\{1,2\},\{3,4\}\}$, we have $P_X\cap V(G_3)\neq \emptyset$, then $(G_3,L_3)$ corresponds to a branch of type (I1), i.e., a branch for the sets $U_{X,i},U_{X,j}$.
Let $Y$ be the set of vertices in $G_3$ that consists of safe forward neighbors of $U_{\{1,2\},1}$ (if $P_{\{1,2\}}\cap V(G_3)\neq \emptyset$) and safe forward neighbors of $U_{\{3,4\},3}$ (if $P_{\{3,4\}}\cap V(G_3)\neq \emptyset$).


For every $c: Y \to [4]$, we create a corresponding instance as follows:
\begin{enumerate}
    \item for every $v\in  Y$, we set the list of $v$ to $L_3(v) \cap \{c(v) \}$,
    \item  we exhaustively apply reduction rules,
    \item  we remove all edges whose one endpoint has list $\{1,2\}$ and the other $\{3,4\}$, and at least one endpoint is in $P$.
\end{enumerate}

By \cref{claim:good-neighbors}, each vertex has at most 12 safe forward neighbors and thus, we guess a coloring on at most $ 12\cdot 2\ell=24\ell$ vertices. 
Therefore, the number of instances created in this phase is at most $4^{24\ell}$. 
Note that the edges removed in the final step do not matter for coloring, so we obtain an equivalent instance.
Let $(G_4,L_4)$ be an instance obtained in this phase. 
We point out that in general, if we remove edges in a (ordered) graph which is $F$-free for some graph $F$, it does not have to stay $F$-free.
However, in our case, we will show that the resulting graph is even $\jj$-free.

\begin{claim}
    $G_4$ is $\jj$-free.
\end{claim}
\begin{claimproof}
    Suppose there is an induced $\jj$ on vertices $x,y,z$ with $x \prec y \prec z$.
    Since $G[S]$ is $\jj$-free and we did not remove the edges inside $S$,
    we must have $x\in P$.    
    By symmetry, assume that $L_4(x)=\{1,2\}$.
    
    In particular, $P_{\{1,2\}}\cap V(G_4)\neq \emptyset$.
    As observed before, $(G_4,L_4)$ corresponds to a branch of type (I2) for sets $U_{\{1,2\},1}$, $U_{\{1,2\},2}$.
    By the definition of branch (I2), since the list of $x$ was not reduced to size 1, for every $x' \in U_{\{1,2\},1}$ it holds that $x' \prec x$.
    Since edges $xy, xz$ are not removed, the lists of $y$ and $z$ are not equal to $\{3,4\}$. 
    In particular, if any $v \in U_{\{1,2\},1}$ is adjacent to any $w \in \{x,y,z\}$, then $w$ is a safe forward neighbor of $v$. Consequently, the color of $w$ was guessed and this vertex was removed by reduction rules.
    This means that $U_{\{1,2\},1}$ is non-adjacent to $\{x,y,z\}$.
    Note that $U_{\{1,2\},1}$ was the set of vertices whose lists were all set to $\{1\}$, and (R1) did not return no, so $U_{\{1,2\},1}$ is an independent set of size $\ell$.

    We aim to show that $U_{\{1,2\},1} \cup \{x,y,z\}$ induces $\lj$ in $G$, which would give a contradiction and finish the proof of the claim.
    If this is not the case, it must happen that the edge $yz$ exists in $G_3$, but we removed it in Phase 4. 
    However, this cannot happen as the lists of both $y$ and $z$ are not equal to $\{3,4\}$ and we only removed edges with one endvertex with list $\{3,4\}$. This completes the proof of claim.
\end{claimproof}

Summing up, the instances obtained in Phase 4 are $\jj$-free and thus chordal. Chordal graphs of bounded clique number have bounded treewidth, and thus, each obtained instance can be solved in polynomial time.
This completes the proof.
\end{proof}

Let us conclude this section with brief discussion why the approach from \cref{thm:4colorsj16} fails for more than 4 colors.
In general, in \lcol{$k$}, it is beneficial if adjacent vertices have intersecting lists.
This way deciding on a color of one vertex allows to shrink the list of the other, see e.g.~\cite{DBLP:conf/sosa/PilipczukPR21,DBLP:journals/algorithmica/BonnetR19}.
On the other hand, adjacent vertices with disjoint lists of size at least 2 often appear in hardness proofs of \lcol{k} for $k \geq 4$, see \cref{sec:hardness} or, e.g.,~\cite{DBLP:journals/algorithmica/BonnetR19}.

In the proof of \cref{thm:4colorsj16}, we called (forward) neighbors of $v$ with list disjoint from $L(v)$ \emph{dangerous} and dealing with them was the most technically involved part of the argument.
The crucial property that was very useful here is that every dangerous forward neighbor of $v$ has the same list, i.e., $[4] \setminus L(v)$. This enforces many constraints on possible lists of vertices.

Already for \lcol{5}, a vertex $v$ can have several types of forward neighbors with list disjoint from $L(v)$, and this is the main obstacle in generalizing our approach to more than 4 colors.

\section{Hardness results}\label{sec:hardness}
\subsection[]{$\jp$-free graphs}
\begin{theorem}\label{thm:jp}
 \lcol{4}  is \textsf{NP}-hard on $\jp$-free ordered graphs.
\end{theorem}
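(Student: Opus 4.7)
The plan is to prove Theorem~\ref{thm:jp} by polynomial reduction from \sat{3}. Given a 3-CNF formula $\phi$ with variables $x_1,\ldots,x_n$ and clauses $C_1,\ldots,C_m$, I would construct an ordered graph $G$ and a list function $L:V(G)\to 2^{[4]}$ such that $\phi$ is satisfiable iff $(G,L)$ is a yes-instance of \lcol{4}, and $G$ is $\jp$-free under the chosen ordering.

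The construction uses two kinds of gadgets. For each variable $x_i$, I would introduce a pair of adjacent vertices with list $\{1,2\}$, encoding the truth assignment by which of the two receives color $1$. For each clause $C_j$, I would introduce a constant-size clause gadget whose internal vertices carry lists contained in $\{3,4\}$ together with a few ``bridging'' vertices with lists like $\{2,3\}$ or $\{1,4\}$ linked to the literal vertices, designed so that the gadget admits a list coloring extending the assignment iff at least one literal of $C_j$ is true. Using the fourth color in the clause gadget is essential, since \lcol{3} is polynomial-time solvable in $\jp$-free graphs (as noted in the introduction).

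The delicate step is choosing the linear ordering so that $G$ is $\jp$-free. I would place the variable gadgets first (as a matching, so they introduce no $\jj$ pattern among themselves) and then the clause gadgets. The naive ordering fails: a single literal vertex $v$ occurring in two clauses has two forward neighbors in different clause gadgets, which typically form a $\jj$ with $v$, and a subsequent clause gadget contains vertices non-adjacent to all three, yielding a forbidden $\jp$. To fix this I would exploit the disjoint-list trick: variable lists lie in $\{1,2\}$ and clause-internal lists in $\{3,4\}$, so extra edges between such vertex classes can be added ``for free'' without affecting the set of valid colorings, and I would use these free edges to ensure every later vertex dominates every earlier $\jj$. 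An alternative is to duplicate each literal occurrence into a separate vertex, linked via equality gadgets whose forward degree is at most one, thereby eliminating the $\jj$ altogether.

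The main obstacle is precisely the tension between the combinatorial richness required to encode \sat{3} and the rigid structural constraint of $\jp$-freeness. I expect the correctness of the coloring-to-assignment translation to be routine once the gadget is designed. The real technical content will be the case analysis verifying that no induced $\jj$ of $G$ is followed by a vertex non-adjacent to all three of its vertices. This will require distinguishing $\jj$s arising inside a single clause gadget, $\jj$s centered at a literal vertex with forward neighbors in two different clause gadgets, and $\jj$s spanning two clause gadgets, and checking in each case that every subsequent vertex in the ordering is adjacent to at least one of the three vertices of the $\jj$.
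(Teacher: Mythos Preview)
Your high-level plan---reduce from \sat{3}, use list $\{1,2\}$ for variables and $\{3,4\}$ for clause internals, and exploit that edges between disjoint-list vertices are ``free''---matches the paper's approach, and you have correctly identified the free-edge trick as the crux. However, your concrete sketch has a gap: the bridging vertices you mention with lists such as $\{2,3\}$ or $\{1,4\}$ share a color with \emph{both} $\{1,2\}$ and $\{3,4\}$, so you cannot freely saturate them with edges to either side. Hence the fix ``add free edges so that every later vertex dominates every earlier $\jj$'' cannot be applied to those vertices, and with your proposed ordering (variables first) the case analysis you outline would not close.

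The paper avoids this by a different ordering and a sharper use of the biclique. It introduces \emph{one} vertex $x_i$ per variable (list $\{1,2\}$) and one vertex $y_{i,j}$ per literal \emph{occurrence} (list $\{3,4\}$), makes $X\cup Y$ a \emph{complete} bipartite graph, and places the blocks in the order $Z\prec Y\prec C\prec X$, with the variable vertices \emph{last}. The bridging vertices $z^1_{i,j},z^2_{i,j}$ (lists in $\{\{1,3\},\{1,4\},\{2,3\},\{2,4\}\}$) come first; each has degree exactly two with neighbors $x_i,y_{i,j}$ which are adjacent, so no $\jj$ is rooted there. Vertices in $C\cup X$ have at most one forward neighbor, so no $\jj$ is rooted there either. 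The only possible root of a $\jj$ is some $y\in Y$, but then the trailing isolated vertex $w_4$ of a $\jp$ would have to lie in $X$, and the complete bipartite edge $yw_4$ kills it. This yields a three-line verification of $\jp$-freeness rather than the multi-case analysis you anticipated. Your ``alternative'' of duplicating literal occurrences is in fact what the paper does, but the equality between $x_i$ and all its $y_{i,j}$ is enforced not by a path-like equality gadget but by the pair $z^1_{i,j},z^2_{i,j}$ together with the direct edge $x_iy_{i,j}$.
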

\begin{proof}
    Let $\Phi$ be an instance of \sat{3} with variables $v_1,\ldots,v_n$ and clauses $C_1,\ldots,C_m$; we may assume that each clause contains exactly three variables.
    We will construct an ordered graph $G$ with lists $L: V(G)\to 2^{[4]}$ such that $(G,L)$ is a yes-instance of \lcol{4} if and only if $\Phi$ is satisfiable.

    First, for each variable $v_i$ we introduce a vertex $x_i$ with list $\{1,2\}$.
    For an occurrence of a variable $v_i$ in a clause $C_j$ we introduce a vertex $y_{i,j}$ with list $\{3,4\}$.
    Let $X$ denote the set of all vertices of type $x_i$, and $Y$ denote the set of all vertices of type $y_{i,j}$.
    We add all edges between $X$ and $Y$.
    
    Furthermore, for every $i\in [n]$, and every $j\in [m]$ such that $C_j$ contains $v_i$, we introduce vertices $z_{i,j}^1,z_{i,j}^2$ and we add edges $x_iz_{i,j}^1,x_iz_{i,j}^2,y_{i,j}z_{i,j}^1,y_{i,j}z_{i,j}^2$.
    The lists of $z_{i,j}^1,z_{i,j}^2$ are respectively $\{1,4\}$ and $\{2,3\}$ if the occurrence of $v_i$ in $C_j$ is positive, and $\{1,3\}$ and $\{2,4\}$ otherwise. The set of all vertices of type $z_{i,j}^1$ or $z_{i,j}^2$ is denoted by $Z$.
        
    Moreover, for each clause $C_j$, we fix arbitrarily an ordering of its variables. For every $j\in [m]$ we add vertices $a_j,b_j,c_j,d_j$ with lists $L(a_j)=\{1,4\}$, $L(b_j)=\{2,4\}$, $L(c_j)=\{3,4\}$, and $L(d_j)=\{1,2,3\}$. Furthermore, we add edges $a_jd_j,b_jd_j,c_jd_j$, and $a_jy_{i_1,j},b_jy_{i_2,j},c_jy_{i_3,j}$, where $v_{i_1},v_{i_2},v_{i_3}$ are the consecutive variables of $C_j$.
    We denote $C=\{a_j,b_j,c_j,d_j \ | \ j\in[m]\}$.
    This completes the construction of $(G,L)$ (see \cref{fig:hard-J16}).
    Note that $|V(G)|=|X|+|Y|+|Z|+|C|=n+3m+3m+4m=n+10m$, so $|V(G)|=\Oh(n+m)$.

    Now, let us define the ordering of vertices of $G$.
    All vertices of $Z$ precede all vertices of $Y$, which precede all vertices of $C$, which in turn precede all vertices in $X$.
    The ordering within each of the sets is arbitrary, with one exception:
    for any $j\in [m]$, the vertex $d_j$ appears after $a_j,b_j$, and $c_j$.
    This completes the definition of the ordering of $V(G)$ (see \cref{fig:hard-J16}).
    
    \begin{center}
\begin{figure}[t]
    \centering
   \begin{tikzpicture}[every node/.style={draw,circle,fill=white,inner sep=0pt,minimum size=8pt},every loop/.style={}]

\node[fill=yellow,opacity=0.5] (z11) at (0,0) {};
\node[fill=gray,opacity=0.6] (z12) at (1,0) {};
\node[fill=green,opacity=0.5] (z21) at (2,0) {};
\node[fill=red,opacity=0.6] (z22) at (3,0) {};
\node[fill=yellow,opacity=0.5] (z31) at (4,0) {};
\node[fill=gray,opacity=0.6] (z32) at (5,0) {};

\node[fill=orange,opacity=0.5] (y1) at (7,0) {};
\node[fill=orange,opacity=0.5] (y2) at (8,0) {};
\node[fill=orange,opacity=0.5] (y3) at (9,0) {};

\node[fill=green,opacity=0.5] (c1) at (11,0) {};
\node[fill=gray,opacity=0.6] (c2) at (12,0) {};
\node[fill=orange,opacity=0.5] (c3) at (13,0) {};
\node (c4) at (14,0) {};

\node[fill=blue,opacity=0.5] (x1) at (16,0) {};
\node[fill=blue,opacity=0.5] (x2) at (17,0) {};
\node[fill=blue,opacity=0.5] (x3) at (18,0) {};

\foreach \j in {1,2,3}
{
\foreach \k in {1,2,3}
{
\draw (y\j) to [bend right] (x\k);
}
}

\foreach \j in {1,2,3}
{
\draw (z\j1) [bend left] to (x\j);
\draw (z\j1) [bend right] to (y\j);
\draw (z\j2) [bend left] to (x\j);
\draw (z\j2) [bend right] to (y\j);
\draw (y\j) [bend left] to (c\j);
\draw (c\j) [bend left] to (c4);
}

\node[draw=none,fill=none] (Z) at (2.5,-2) {$Z$};
\node[draw=none,fill=none] (Y) at (8,-2) {$Y$};
\node[draw=none,fill=none] (C) at (12.5,-2) {$C$};
\node[draw=none,fill=none] (X) at (17,-2) {$X$};

\node[fill=blue,opacity=0.5,label=right:{$\{1,2\}$}] (l) at (0,4) {};
\node[fill=orange,opacity=0.5,label=right:{$\{3,4\}$}] (l) at (0,3.5) {};
\node[fill=green,opacity=0.5,label=right:{$\{1,4\}$}] (l) at (0,3) {};

\node[fill=yellow,opacity=0.5,label=right:{$\{1,3\}$}] (l) at (1.5,4) {};
\node[fill=red,opacity=0.6,label=right:{$\{2,3\}$}] (l) at (1.5,3.5) {};
\node[fill=gray,opacity=0.6,label=right:{$\{2,4\}$}] (l) at (1.5,3) {};

\node[label=right:{$\{1,2,3\}$}] (l) at (3,4) {};

\end{tikzpicture}
    
     \caption{Construction from \cref{thm:jp} for a clause $(\neg v_1\lor v_2\lor \neg v_3)$.}
    \label{fig:hard-J16}
\end{figure}
\end{center}

    Now let us verify the equivalence of instances.
    
    \begin{claim}
        $(G,L)$ is a yes-instance of \lcol{4}  if and only if $\Phi$ is satisfiable.
    \end{claim}
    \begin{claimproof}
        First assume that there is a satisfying assignment $\psi: \{v_1,\ldots,v_n\} \to \{\textsf{true}, \textsf{false}\}$ of $\Phi$.
        Let us define a 4-coloring $f$ of $G$ as follows.
        For every $i\in [n]$, if $\psi(v_i)=\textsf{true}$, we set $f(x_i)=1$, $f(y_{i,j})=3$ if the occurrence of $v_i$ in $C_j$ is positive, and $f(y_{i,j})=4$ if the occurrence of $v_i$ in $C_j$ is negative.
        Otherwise, we set $f(x_i)=2$, $f(y_{i,j})=4$ if the occurrence of $v_i$ in $C_j$ is positive, and $f(y_{i,j})=3$ if the occurrence of $v_i$ in $C_j$ is negative.
        Note that $f(y_{i,j}) = 3$ if and only if $v_i$ appears in $C_j$ as a true literal.
        So far, we used disjoint sets of colors to color both sides of the biclique induced by $X \cup Y$, so $f$ respects the edges with both endpoints already colored. 
        
        Now consider a vertex $z$ of $Z$ -- note that it has one neighbor $x \in X$ and one neighbor $y \in Y$.
        If $z$ corresponds to a positive occurrence, then $L(z) \in \{ \{1,4\}, \{2,3\} \}$, and the colors of $x,y$ are either $1,3$ or $2,4$.
        Analogously, if $z$ corresponds to a negative occurrence, then $L(z) \in \{ \{1,3\}, \{2,4\} \}$, and the colors of $x,y$ are either $1,4$ or $2,3$.
        In all cases, we can always find a color for $z$ that is different from colors of its neighbors.
                
        It remains to extend $f$ to the vertices of $C$. Let $j\in[m]$. Since $\psi$ is a satisfying assignment of $\phi$, there is at least one true literal in  $C_j$. Therefore, for some $i$, the vertex $y_{i,j}$ is colored with $3$.
        Now depending on the order of $v_i$ among the variables of $C_j$, one of the vertices $a_j,b_j,c_j$ (the one adjacent to $y_{i,j}$) can be colored with $4$, so we can extend $f$ first to $a_j,b_j,c_j$ so that at least one of them is colored with $4$, and thus there will be a color left for $d_j$ whose list is $\{1,2,3\}$. This completes the definition of $f$.

        So now assume there is an coloring $f$ of $G$ that respects $L$.
        We define an assignment $\psi$ of $\Phi$, so that $\psi(v_i)=\textsf{true}$ if and only if $f(x_i)=1$.
        Let us verify that $\psi$ satisfies $\Phi$.
        Consider $j\in [m]$. Since $d_j$ is colored with one of $1,2,3$, at least one of $a_j,b_j,c_j$, whose lists are respectively $\{1,4\},\{2,4\}$ and $\{3,4\}$, has to be colored with $4$, let $u_j\in \{a_j,b_j,c_j\}$ be a vertex colored by $f$ with $4$. The neighbor $y_{i,j}$ of $u_j$ in $Y$, which corresponds to an occurrence of $v_i$ in $C_j$, has to be colored with $3$. 
        Now consider vertices $z_{i,j}^1$ and $z_{i,j}^2$.
        If the occurrence of $v_i$ in $C_j$ is positive, these two vertices from $Z$ have lists $\{1,4\}$ and $\{2,3\}$, and thus we must have $f(x_i)=1$, so $\psi(v_i)=\textsf{true}$ and $v_i$ satisfies $C_j$.
        If the occurrence of $v_i$ in $C_j$ is negative, the two vertices from $Z$ have lists $\{1,3\}$ and $\{2,4\}$, and thus we must have $f(x_i)=2$, so $\psi(v_i)=\textsf{false}$ and $v_i$ satisfies $C_j$.

        This completes the proof of claim.
    \end{claimproof}

I remains to show that $G$ is $\jp$-free.
    \begin{claim}
    $G$ is $\jp$-free.
    \end{claim}
    \begin{claimproof}
        Suppose that $G$ contains an induced subgraph isomorphic to $\jp$, and denote the vertices of this induced subgraph as $w_1,w_2,w_3,w_4$, where $w_1 \prec w_2 \prec w_3 \prec w_4$. Let us analyze where these four vertices might appear in $G$.
        Let us start with $w_1$.
        Note that it cannot be in $C\cup X$, as each of the vertices in $C\cup X$ has at most one forward neighbor.
        Suppose that $w_1 \in Z$. Note that each vertex in $Z$ has degree $2$ and its neighbors are adjacent. Since $w_2,w_3$ are non-adjacent, we observe that $w_1 \notin Z$.
        Consequently, we must have $w_1 \in Y$. Each vertex of $Y$ has one neighbor in $C$ and its remaining forward neighbors are in $X$.
        In particular, $w_4$ must be in $X$. However, this is a contradiction, as $w_1$ and $w_4$ are non-adjacent.
    \end{claimproof}
    
    This completes the proof of the theorem.
\end{proof}
\subsection[]{$\rainbow$-free graphs}
In this section we show the following hardness result.

\begin{theorem}\label{thm:m7}
\lcol{4} is \textsf{NP}-hard on $\rainbow$-free graphs.
\end{theorem}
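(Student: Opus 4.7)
The plan is to reduce from $\sat{3}$, following the overall strategy of \cref{thm:jp} but with a different ordering and with gadgets modified so that no induced $\rainbow$ appears. Given a formula $\Phi$ with variables $v_1,\dots,v_n$ and clauses $C_1,\dots,C_m$, I will construct an ordered graph $G$ with list function $L\colon V(G)\to 2^{[4]}$ such that $(G,L)$ is a yes-instance of \lcol{4} iff $\Phi$ is satisfiable, and such that $G$ is $\rainbow$-free.

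I plan to reuse the building blocks from the proof of \cref{thm:jp}. For each variable $v_i$ and each clause $C_j$ in which it appears, introduce a fresh variable copy $x_i^j$ with list $\{1,2\}$, a literal vertex $y_{i,j}$ with list $\{3,4\}$, and two $z$-vertices whose lists are $\{1,4\},\{2,3\}$ if the occurrence is positive and $\{1,3\},\{2,4\}$ if it is negative; the four vertices form a $C_4$ that forces $y_{i,j}$ to be colored $3$ precisely when the literal is true. For each clause $C_j$ introduce the clause gadget on $a_j,b_j,c_j,d_j$ with the same lists as in \cref{thm:jp}, forcing at least one of $y_{i_1,j},y_{i_2,j},y_{i_3,j}$ to receive color $3$. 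Different copies of the same variable will be synchronized by length-$2$ paths whose middle vertex has list $\{1,2\}$, which forces the two endpoints to be equal.

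The key departure from \cref{thm:jp} is the ordering. The biclique between all variable and literal vertices used there cannot be reused: any pair of ``crossing'' edges $x_1 y_2$, $x_2 y_1$ with $x_1\prec x_2\prec y_1\prec y_2$ would directly exhibit a $\rainbow$. Instead, I will arrange the vertices into clause blocks placed in the order $C_1,C_2,\dots,C_m$, with synchronization bridges in between, and order the vertices inside each clause block so that every propagation $C_4$ is placed contiguously in the $\rainbow$-free order $x_i^j,z_{i,j}^1,z_{i,j}^2,y_{i,j}$ (in this $C_4$ the endpoints of the potentially nesting pair $x_i^j\prec y_{i,j}$ are non-adjacent, so no induced $\rainbow$ is created). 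If necessary, I will also add the missing edges to turn the clause gadget into a $K_4$ on $a_j,b_j,c_j,d_j$: this preserves the list-colorability analysis and guarantees that any four vertices involving three of $a_j,b_j,c_j,d_j$ induce too many edges to be a $\rainbow$.

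The correctness of the reduction will follow from essentially the same argument as in \cref{thm:jp}: a satisfying assignment extends to a list $4$-coloring by propagating truth values through the $C_4$ gadgets and coloring the clause gadget using a true literal, and conversely a list $4$-coloring determines a truth assignment by reading the color of each $x_i^j$. The main obstacle will be verifying $\rainbow$-freeness of the full ordered graph: I must check, by a somewhat tedious case analysis over $4$-tuples $w_1\prec w_2\prec w_3\prec w_4$, that the induced edge set is never exactly $\{w_1w_4,w_2w_3\}$. The hardest sub-cases arise where a long edge (either a clause-to-literal edge $a_jy_{i_1,j}$, $b_jy_{i_2,j}$, $c_jy_{i_3,j}$, or a synchronization bridge between clause blocks) spans a propagation $C_4$ or another gadget; these must be ruled out either by showing that the interior contains no edge, or by exhibiting an additional edge that prevents the induced subgraph from being an induced $\rainbow$.
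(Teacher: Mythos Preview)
Your proposal has a genuine gap. You correctly flag that ``long'' edges spanning other gadgets are the dangerous cases, but the construction as sketched \emph{does} create induced copies of $\rainbow$ there, and neither of the two escape routes you list (``interior contains no edge'' / ``an additional edge'') is available.

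Concretely, take a clause $C_j$ with literals on $v_{i_1},v_{i_2},v_{i_3}$ and order the block as
\[
[x_{i_1}^j,z_{i_1,j}^1,z_{i_1,j}^2,y_{i_1,j}]\;[x_{i_2}^j,z_{i_2,j}^1,z_{i_2,j}^2,y_{i_2,j}]\;[x_{i_3}^j,\dots]\;[a_j,b_j,c_j,d_j].
\]
The clause edge $y_{i_1,j}a_j$ spans the entire second propagation $C_4$. Inside that $C_4$ you have the edge $x_{i_2}^jz_{i_2,j}^1$. Now the four vertices
$y_{i_1,j}\prec x_{i_2}^j\prec z_{i_2,j}^1\prec a_j$
carry exactly the two edges $y_{i_1,j}a_j$ and $x_{i_2}^jz_{i_2,j}^1$: the two literal gadgets are vertex-disjoint and non-adjacent, and making $\{a_j,b_j,c_j,d_j\}$ a $K_4$ adds no edge among these four. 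This is an induced $\rainbow$. The same obstruction hits your synchronization bridges: if $v_i$ occurs in $C_{j_1}$ and $C_{j_2}$ with $j_1+1<j_2$, any edge of the bridging $P_3$ spans an intermediate clause block, which contains edges and is non-adjacent to the bridge. There is no local patch: whenever an edge nests strictly over a disjoint gadget that has at least one internal edge, you get an induced $\rainbow$.

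The paper avoids this by a structurally different mechanism. All gadgets are built as \emph{links}: ordered $\rainbow$-free graphs whose first $\ell$ and last $\ell'$ vertices are independent ``interface'' sets. Links are combined only by identifying the output interface of one with the input interface of the next (\cref{obs:m7-combining}), so no edge of one gadget can ever span an interior edge of another. Because the basic indicator and NOT-$cc$ links can only constrain \emph{consecutive} inputs, the paper additionally builds permutation links (composing rotation gadgets, \cref{lem:rotation-gadget}) to reshuffle inputs between stages. The final graph is a chain of such links encoding a Positive NAE-3-SAT instance. This chaining discipline is precisely what your ``clause blocks with bridges'' layout lacks.
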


\subsubsection{The construction}

First, let us describe the  main building blocks in our reduction.
For positive integers $\ell,\ell'$, a \emph{link} is  a tuple $(F,(x_1,\ldots,x_\ell),(y_1,\ldots,y_{\ell'}))$, where $F$ is an ordered graph, and $(x_1,\ldots,x_\ell)$ and $(y_1,\ldots,y_{\ell'})$ are tuples of vertices of $F$, such that:
\begin{enumerate}
    \item $x_1,\ldots,x_\ell$ are, in this order, the first $\ell$ vertices of $F$,
    \item $y_1,\ldots,y_{\ell'}$ are, in this order, the last $\ell'$ vertices of $F$,
    \item sets $\{x_1,\ldots,x_\ell\}$ and $\{y_1\ldots,y_{\ell'}\}$ are disjoint and independent,
    \item $F$ is $\rainbow$-free.    
\end{enumerate}
The vertices $x_1,\ldots,x_\ell$ (resp., $y_1,\ldots,y_{\ell'})$) are called \emph{input} (resp., \emph{output}) vertices of the link.

The properties of links allow us combine them, without creating the forbidden subgraph.
This operation, that we call \emph{chaining}, is formally described in the following lemma.

\begin{lemma}\label{obs:m7-combining}
Let $(F^1,(x^1_1,\ldots,x^1_\ell),(y^1_1,\ldots,y^1_{\ell'}))$
and $(F^2,(x^2_1,\ldots,x^2_{\ell'}),(y^2_1,\ldots,y^2_{\ell''}))$ be two links on disjoint sets of vertices.
The ordered graph $F$ obtained from $F_1$ and $F_2$ by identifying  $y^1_i$ with $x^2_i$ for each $i\in [\ell']$  is a link.
\end{lemma}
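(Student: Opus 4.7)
The plan is to verify each of the four conditions in the definition of a link for the chained graph $F$, using the natural ordering obtained by placing $V(F^1)$ first (in its original order), followed by the vertices of $V(F^2) \setminus \{x^2_1,\ldots,x^2_{\ell'}\}$ (in their order from $F^2$). This ordering is consistent with both $F^1$'s and $F^2$'s orderings because the identified vertices $z_i := y^1_i = x^2_i$ are simultaneously the last $\ell'$ of $F^1$ and the first $\ell'$ of $F^2$. I would partition $V(F)$ into three consecutive blocks: $A = V(F^1) \setminus \{y^1_1,\ldots,y^1_{\ell'}\}$, $B = \{z_1,\ldots,z_{\ell'}\}$, and $C = V(F^2) \setminus \{x^2_1,\ldots,x^2_{\ell'}\}$, so that $A \prec B \prec C$ in $F$.

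Conditions (1)--(3) are mostly bookkeeping. By property (3) of links applied to $F^1$, the inputs $x^1_1,\ldots,x^1_\ell$ of $F^1$ are disjoint from $B$ and thus lie in $A$; since they are the first $\ell$ vertices of $F^1$ and $A \cup B$ comes first in $F$, they remain the first $\ell$ vertices of $F$. Analogously, $y^2_1,\ldots,y^2_{\ell''} \subseteq C$ are the last $\ell''$ vertices of $F$. Since these sets are contained in the disjoint blocks $A$ and $C$, they are disjoint. Independence is preserved because the chaining introduces no new edges: $E(F) = E(F^1) \cup E(F^2)$ (after identification), so the induced subgraph on the inputs is $F^1[\{x^1_1,\ldots,x^1_\ell\}]$ (edgeless), and similarly for outputs.

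The main content is condition (4), $\rainbow$-freeness of $F$. The key structural observation is that every edge of $F$ lies inside $A \cup B = V(F^1)$ or inside $B \cup C = V(F^2)$; in particular, there is no edge between $A$ and $C$. I would argue by contradiction: assume $F$ contains an induced $\rainbow$ on vertices $w_1 \prec w_2 \prec w_3 \prec w_4$ with edges $w_1w_4$ and $w_2w_3$. Three cases: if $w_4 \in A \cup B$, then $w_1,w_2,w_3,w_4 \in V(F^1)$ (they all precede or equal $w_4$), giving an induced $\rainbow$ in $F^1$, contradicting its $\rainbow$-freeness. If $w_1 \in B \cup C$, then symmetrically all four vertices lie in $V(F^2)$, contradiction. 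The remaining case forces $w_1 \in A$ and $w_4 \in C$, but then the required edge $w_1w_4$ cannot exist.

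I do not foresee any significant obstacle: the argument is purely structural and the potentially delicate point, namely that no $\rainbow$ can ``straddle'' the gluing interface $B$, reduces cleanly to the observation that both endpoints of the ``outer'' edge $w_1w_4$ must lie on the same side of the interface, which in turn forces all four vertices to one side.
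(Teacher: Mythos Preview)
Your proposal is correct and follows essentially the same approach as the paper: the paper dismisses properties (1)--(3) as obvious and, for $\rainbow$-freeness, observes that any induced $\rainbow$ must have $v_1 \in V(F^1)\setminus V(F^2)$ and $v_4 \in V(F^2)\setminus V(F^1)$ (otherwise all four vertices lie in one of the two links), contradicting the absence of edges between these two sets. Your three-case split is just a more explicit rendering of the same argument.
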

\begin{proof}
The first three properties of a link are obvious, so, for contradiction, suppose that $F$ contains an induced copy of $\rainbow$. Let $v_1,v_2,v_3,v_4$ be its consecutive vertices.
Since both $F_1$ and $F_2$ are $\rainbow$-free, it must hold that $v_1\in V(F_1)\setminus V(F_2)$ and $v_4\in V(F_2)\setminus V(F_1)$.
However, $v_1v_4$ is an edge, but, by the construction of $F$, there are no edges between $V(F_1)\setminus V(F_2)$ and $V(F_2)\setminus V(F_1)$, a contradiction.
\end{proof}

In our reduction we will use gadgets that are links enriched by a list function.
We will also speak about chaining gadgets -- in such a situation, we always ensure that the lists of vertices that are identified are equal. Thus, the operation is straightforward.

The main gadget used in our hardness proof is a \emph{NAE gadget}.
\begin{definition}[NAE gadget]
    Let $n \in\N$ and let $I\subseteq \binom{[n]}{3}$ be a set of pairwise disjoint subsets of $[n]$, each of size $3$.
    A \emph{NAE gadget} (for $I$) is a tuple $(C,(x_1,\ldots,x_n),(y_1,\ldots,y_n),L)$, where $(C,(x_1,\ldots,x_n),(y_1,\ldots,y_n))$ is a link and $L:V(C)\to 2^{[4]}$ is a list function, such that:
    \begin{enumerate}[(C1)]
        \item For every $i\in[n]$, it holds that $L(x_i)=L(y_i)=\{1,2\}$.
        \item For every $f: \{x_1,\ldots,x_n\}\to \{1,2\}$ such that for every $\{i,j,k\}\in I$, it holds that $\{f(x_i),f(x_j),f(x_k)\}=\{1,2\}$, we can extend $f$ to a coloring of $(C,L)$.
        \item For every coloring $f$ of $(C,L)$, it holds that:
        \begin{enumerate}
            \item for every $i\in [n]$, $f(x_i)=f(y_i)$,
            \item for every $\{i,j,k\}\in I$, it holds that $\{f(x_i),f(x_j),f(x_k)\}=\{1,2\}$
        \end{enumerate}
    \end{enumerate}    
\end{definition}
For simplicity of notation, we will denote the gadget by $C$.

Intuitively, the gadget plays two roles. First, it transfers the coloring of input vertices to their corresponding output vertices.
Second, it makes sure that for every triple in $I$, the input/output vertices corresponding to that triple are not monochromatic or, in other words, \emph{not all equal} (NAE).
The following lemma is the main technical ingredient of our hardness proof.

\begin{lemma}\label{lem:clause-gadget}
Let $n\in\N$ and let $I\subseteq \binom{[n]}{3}$ be a set of pairwise disjoint subsets of $[n]$, each of size $3$.
In time polynomial in $n$, we can construct a NAE gadget for $I$.
\end{lemma}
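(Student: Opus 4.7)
The plan is to build the gadget modularly: for each triple $T \in I$ I will construct a \emph{single-triple atom} $C_T$, which is a link with $n$ input and $n$ output vertices (all with list $\{1,2\}$) that (i) propagates each input's color to its corresponding output and (ii) enforces NAE on the three inputs indexed by $T$. The full NAE gadget is then obtained by chaining the atoms $C_T$ for $T \in I$ in some order using \cref{obs:m7-combining}. Chaining is compatible with (C1) since identified vertices share the list $\{1,2\}$; condition (C3)(a) is preserved as transmission composes; (C3)(b) is enforced by the atom responsible for each triple; and (C2) holds by greedy extension, atom by atom. The disjointness of the triples in $I$ plays no essential role here, but ensures that each coordinate is subject to a non-trivial constraint in at most one atom, which simplifies the completeness argument.

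\paragraph{Single-triple atom.} To build $C_T$ for $T = \{p,q,r\}$, the key ingredient is a NAE sub-gadget on three inputs with list $\{1,2\}$. I would design it using the clause-gadget technique of \cref{thm:jp}. For each $i \in T$, I introduce an auxiliary vertex $\alpha_i$ with list $\{3,4\}$ and a \emph{polarity converter}: two enforcer vertices with lists $\{1,4\}$ and $\{2,3\}$, each adjacent to $x_i$ and $\alpha_i$. A straightforward case check shows this forces $f(\alpha_i) = 3 \iff f(x_i) = 1$. Then I attach two clause gadgets modeled on the $(a_j,b_j,c_j,d_j)$-structure of \cref{thm:jp}, one of which forces at least one $\alpha_i$ to equal $3$ and the other forces at least one $\alpha_i$ to equal $4$. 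Translated back through the polarity converters, these two constraints are exactly "some $f(x_i) = 1$" and "some $f(x_i) = 2$" over $i \in T$, i.e.\ NAE. Propagation of all $n$ inputs to the outputs is achieved by a sequence of layers of $n$ vertices each with list $\{1,2\}$, with edges between consecutive layers connecting corresponding tracks, arranged so that the length of each transmission path is even.

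\paragraph{Ordering, $\rainbow$-freeness, and main obstacle.} Vertices are linearly ordered as inputs first, then several transmission layers, with the NAE sub-gadget embedded into one specific layer so that its auxiliary vertices occupy the positions of tracks $p$, $q$, $r$ in that layer, and the outputs last. Under this order, all transmission edges between two consecutive layers have the same length and therefore cross one another but never nest; the auxiliary NAE edges are short and, by virtue of being embedded into one of the layers rather than lying strictly between layers, are not enclosed by any transmission edge. The main obstacle will be precisely this layout: one must verify case-by-case that no edge of $C_T$ lies strictly inside another, particularly at the interface where the polarity converters and the clause gadgets attach to the embedding layer. Once $\rainbow$-freeness is established, (C2) and (C3) are verified by tracing the forced values through the polarity converters and clause constraints in a manner analogous to the soundness/completeness proof of \cref{thm:jp}, and the lemma follows by combining $|I|$ atoms as described above.
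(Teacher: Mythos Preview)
Your modular strategy of building one atom per triple and chaining is sound, and the logical core of your NAE sub-gadget (polarity converters plus two clause gadgets borrowed from \cref{thm:jp}) does enforce the right constraint on colours. The gap is entirely in the $\rainbow$-freeness argument, and it is not just a matter of ``case-by-case'' checking: as stated, the layout cannot work.

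First, the NAE sub-gadget for a single triple has at least $17$ auxiliary vertices (three $\alpha_i$, six enforcers, two clause quadruples), so the phrase ``occupy the positions of tracks $p,q,r$ in that layer'' is not meaningful. Second, the claim that ``the auxiliary NAE edges are short'' is false: the edges $\alpha_p a$, $\alpha_q b$, $\alpha_r c$ must reach a common clause cell, so at least one of them spans from column $p$ to beyond column $r$. For a triple like $\{1,2,n\}$ that is essentially the whole width. Third, and most concretely, with two clause gadgets attached to the same three $\alpha$-vertices you get unavoidable nestings unless you carefully interleave: e.g.\ if $b' \prec a$ then $\alpha_p \prec \alpha_q \prec b' \prec a$ with the non-edges $\alpha_p\alpha_q$, $\alpha_p b'$, $\alpha_q a$, $ab'$ gives an induced $\rainbow$. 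One can repair this particular nesting by decreeing $\{a,a'\}\prec\{b,b'\}\prec\{c,c'\}\prec d\prec d'$, but then one must still thread the $n-3$ transmission tracks and the six enforcer vertices through the same layer without any of their edges being enclosed, and your sketch gives no mechanism for this. Recall also that the gadget of \cref{thm:jp} is built to be $\jp$-free, not $\rainbow$-free, so its ordering cannot be reused.

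The paper sidesteps exactly this difficulty with a different decomposition. It first splits NAE into NOT-$111$ chained with NOT-$222$. Each NOT-$ccc$ gadget is then built as a chain: a \emph{permutation gadget} brings the first two indices of every triple to consecutive positions; an \emph{indicator gadget} creates, for each such pair, a fresh track whose colour records whether both are $c$; a second permutation gadget makes that fresh track consecutive with the third index; a \emph{NOT-$cc$ gadget} then forbids both of these from being $c$; a final permutation restores the original order. The point is that the indicator and NOT-$cc$ gadgets only ever need to interact with \emph{adjacent} tracks, which makes their $\rainbow$-freeness easy (edges between consecutive layers are monotone). All the long-range rearrangement is delegated to the permutation gadgets, and those are built from rotation gadgets in which a single vertex is made universal in its layer --- this universality is precisely what kills any would-be $\rainbow$ involving the long internal edges. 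That idea, ``permute first so that constraints only touch neighbours'', is the missing ingredient in your approach.
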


Let us postpone the proof of \cref{lem:clause-gadget}, and first let us show \cref{thm:m7} assuming that \cref{lem:clause-gadget} holds.

\begin{proof}[Proof of \cref{thm:m7}]
We reduce from \textsc{Positive NAE-$3$-Sat}. An instance of this problem consists of a set of boolean variables and a set of clauses, each containing exactly three variables (none of them is negated).
We ask if there exists a truth assignment in which each clause contains a true and a false variable, i.e., for each clause, the variables in it are not all equal.

Let $\Phi$ be an instance of \textsc{Positive NAE-$3$-Sat} with variables $v_1,\ldots,v_n$ and clauses $C_1,\ldots,C_m$. We can assume that every variable appears at most four times~\cite{DBLP:journals/corr/abs-1908-04198}.
In polynomial time, we will construct an instance $(G,L)$ of \lcol{4} such that:
\begin{enumerate}
    \item $G$ is an ordered $\rainbow$-free graph,
    \item $(G,L)$ admits a proper coloring if and only if $\Phi$ is satisfiable.
\end{enumerate}

As each clause has three variables and each variable appears in at most four clauses,
we can greedily partition the set of clauses into 10 pairwise disjoint subsets; denote them by $\cC_1,\ldots,\cC_{10}$ (we can think of this as a greedy edge coloring of a hypergraph: for each hyperedge, each its vertex blocks at most three colors). 
For $s\in[10]$, we define $I_s\subseteq \binom{[n]}{3}$ so that $\{i,j,k\}\in I_s$, if and only if there is a clause $\{v_i,v_j,v_k\}$ in $\cC_s$.

Now we are ready to construct the instance $(G,L)$.
We start with introducing NAE gadgets $C_{I_1},\ldots,C_{I_{10}}$, for, respectively, $I_1,\ldots,I_{10}$.
For $s\in [10]$, let us denote the input (resp., output) vertices of $C_{I_s}$ by by $(x_1^s,\ldots,x_n^s)$  (resp., $(y_1^s,\ldots,y_n^s)$).
Next, we chain gadgets $C_{I_1},\ldots,C_{I_{10}}$, i.e., for every $s\in [9]$, we identify the output vertices of $C_{I_s}$ with input vertices of $C_{I_{s+1}}$.
This completes the construction of $(G,L)$.

As each gadget is in particular a link, a repeated application of \cref{obs:m7-combining} yields that $G$ is $\rainbow$-free.
Thus, we are left with proving the equivalence of instances.

Suppose that $\Phi$ is satisfiable and let $\psi:\{v_1,\ldots,v_n\}\to \{\textsf{true},\textsf{false}\}$ be a satisfying assignment.
For every $i\in [n]$, we set $f(x_i^1)=1$ if $\psi(v_i)=\textsf{true}$, and $f(x_i^1)=2$ if $\psi(v_i)=\textsf{false}$.
Since for every clause $\{v_i,v_j,v_k\}\in \cC_1$, we have $\{\psi(v_i),\psi(v_j),\psi(v_k)\} = \{\textsf{true}, \textsf{false} \}$, and thus, for every $\{i,j,k\}\in I_1$ we have $\{f(x_i^1),f(x_j^1),f(x_k^1)\} = \{1,2\}$,
we can extend this coloring to a coloring of the whole gadget $C_{I_1}$.
Since for every $i\in[n]$, we have that $f(x_i^1)=f(y_i^1)=f(x_i^2)$, we can repeat the reasoning inductively for every $s\in [10]$ and extend $f$ to all vertices of $G$.

Now suppose that there is a proper coloring $f$ of $(G,L)$.
We define $\psi: \{v_1,\ldots,v_n\}\to \{\textsf{true},\textsf{false}\}$ so that $\psi(v_i)=\textsf{true}$ if $f(x_i^1)=1$ and $\psi(v_i)=\textsf{false}$ if $f(x_i^1)=2$.
Let us verify that $\psi$ satisfies $\Phi$.
Suppose that there is a clause $C_q = \{v_i,v_j,v_k\}$ which is not satisfied,
i.e., all $v_i,v_j,v_k$ are either set $\textsf{true}$ or set \textsf{false} by $\psi$.
By property (C3)~(a), this means that for every $s\in [10]$, we have  $f(x_i^s)=f(x_j^s)=f(x_k^s)$.
As $C_q \in \cC_s$ for some $s \in [10]$, we obtain a contradiction with property (C3)~(b) of a NAE gadget,
This completes the proof.
\end{proof}

\subsubsection{From NOT-$ccc$ gadgets to NAE gadgets}

We will construct NAE gadgets in several steps.
First, let us show that in order to construct a NAE gadget, it is sufficient to construct its restricted variant, called \emph{NOT-$ccc$ gadget}.

\begin{definition}[NOT-$ccc$ gadget]\label{def:notccc}
    Let $c \in \{1,2\}$.
    Let $n \in\N$ and let $I\subseteq \binom{[n]}{3}$ be a set of pairwise disjoint subsets of $[n]$, each of size $3$.
    A \emph{NOT-$ccc$ gadget} (for $I$) is a tuple $(C^c,(x_1,\ldots,x_n),(y_1,\ldots,y_n),L)$, where $(C^c,(x_1,\ldots,x_n),(y_1,\ldots,y_n))$ is a link and $L:V(C^c)\to 2^{[4]}$ is a list function, such that:
    \begin{enumerate}[(C1)]
        \item For every $i\in[n]$, it holds that $L(x_i)=L(y_i)=\{1,2\}$.
        \item For every $f: \{x_1,\ldots,x_n\}\to \{1,2\}$ such that for every $\{i,j,k\}\in I$, it holds that $\{f(x_i),f(x_j),f(x_k)\} \neq \{c\}$, we can extend $f$ to a coloring of $(C^c,L)$.
        \item For every coloring $f$ of $(C^c,L)$, it holds that:
        \begin{enumerate}
            \item for every $i\in [n]$, $f(x_i)=f(y_i)$,
            \item for every $\{i,j,k\}\in I$, it holds that $\{f(x_i),f(x_j),f(x_k)\} \neq \{c\}$
        \end{enumerate}
    \end{enumerate}    
\end{definition}
As usual, we will simply denote the gadget by $C^c$.

Let us emphasize the difference between a NAE gadget and a NOT-$ccc$ gadget.
The first one is responsible for ensuring that for each triple in $I$, the colors assigned to the corresponding vertices are \emph{not all 1} and \emph{not all 2}.
The role of a NOT-$ccc$ gadget is to ensure that the vertices corresponding to each triple in $I$ are not all colored $c$, where $c \in \{1,2\}$.
Thus, a NAE gadget is simultaneously a NOT-111 gadget and a NOT-222 gadget.

As the gadget not only forbids some colorings, but also copies the coloring of the input to the output,
we immediately obtain the following observation.

\begin{observation}\label{obs:from-nai-to-nae}
    Chaining a NOT-111 gadget and a NOT-222 yields a NAE gadget.
\end{observation}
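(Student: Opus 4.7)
The plan is to verify the three defining properties of a NAE gadget for the ordered graph $C$ obtained by chaining the NOT-111 gadget $C^1$, with inputs $(x^1_1,\ldots,x^1_n)$ and outputs $(y^1_1,\ldots,y^1_n)$, with the NOT-222 gadget $C^2$, with inputs $(x^2_1,\ldots,x^2_n)$ and outputs $(y^2_1,\ldots,y^2_n)$, where $y^1_i$ is identified with $x^2_i$ for every $i\in[n]$. The inputs of $C$ are then $(x^1_1,\ldots,x^1_n)$ and its outputs are $(y^2_1,\ldots,y^2_n)$. That $C$ is a link follows directly from \cref{obs:m7-combining}; property (C1) is immediate, since the inputs and outputs of $C$ inherit their lists from the corresponding vertices of $C^1$ and $C^2$, and the identifications are consistent because both gadgets assign list $\{1,2\}$ to the shared vertices.

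For property (C3), I would chain the corresponding properties of $C^1$ and $C^2$. Part (a) -- that $f(x^1_i)=f(y^2_i)$ for any valid coloring $f$ -- follows from two applications: (C3)(a) of $C^1$ gives $f(x^1_i)=f(y^1_i)=f(x^2_i)$, and then (C3)(a) of $C^2$ gives $f(x^2_i)=f(y^2_i)$. For part (b), fix $\{i,j,k\}\in I$; property (C3)(b) of $C^1$ says the values on $x^1_i, x^1_j, x^1_k$ are not all $1$, and property (C3)(b) of $C^2$ (applied to $x^2_i,x^2_j,x^2_k$) combined with part (a) of $C^1$ says those same values are not all $2$. Since the lists are $\{1,2\}$, the set of values must equal $\{1,2\}$.

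For property (C2), I would extend the coloring in two stages. Given $f\colon\{x^1_1,\ldots,x^1_n\}\to\{1,2\}$ satisfying the NAE condition on every triple of $I$, property (C2) of $C^1$ (using that NAE implies ``not all~$1$'') extends $f$ to a coloring $f_1$ of $C^1$. By (C3)(a) of $C^1$, the values of $f_1$ on $y^1_i=x^2_i$ coincide with $f(x^1_i)$, so they still satisfy the NAE condition on each triple of $I$ and in particular satisfy ``not all~$2$''; thus (C2) of $C^2$ extends further to a coloring $f_2$ of $C^2$. The colorings $f_1$ and $f_2$ agree at the identified vertices and together yield the required coloring of $C$.

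The main point to watch is that the two-stage extension in (C2) is consistent at the interface: the values $f_1$ produces on the outputs of $C^1$ must be exactly the values that $C^2$ expects on its inputs, so that the NAE hypothesis on the original input can be reused to invoke (C2) of $C^2$. This is exactly what (C3)(a) of $C^1$ guarantees, so the argument is essentially bookkeeping, and no further structural analysis is required since the $\rainbow$-freeness of the chained graph is already handled by \cref{obs:m7-combining}.
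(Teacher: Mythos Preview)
Your proposal is correct and follows the same approach as the paper, which treats the observation as immediate from the fact that each NOT-$ccc$ gadget both copies the input coloring to the output and forbids the all-$c$ coloring on each triple. You have simply spelled out in detail the bookkeeping the paper leaves implicit.
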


Thus, in order to prove \cref{lem:clause-gadget}, it is sufficient to build a NOT-$ccc$ gadget for $c \in \{1,2\}$.

\subsubsection{Permutation gadgets}

On the way to the proof of \cref{lem:clause-gadget}, we will first construct another type of a gadget.

\begin{definition}[Permutation gadget]\label{def:permutation-gadget}
Let $\ell\in \N$, and let $\sigma:[\ell]\to [\ell]$ be a permutation.
A \emph{permutation gadget} (for $\sigma$) is a tuple $(P_{\sigma},(x_1,\ldots,x_\ell),(y_1,\ldots,y_\ell),L)$, where $(P_{\sigma},(x_1,\ldots,x_\ell),(y_1,\ldots,y_\ell))$ is a link and $L : V(P_\sigma)\to 2^{[4]}$ is a list function, such that:
\begin{enumerate}[(P1)]
 \item For every $i\in[\ell]$, it holds that $L(x_i)=L(y_i)=\{1,2\}$.
 \item Every mapping $f: \{x_1,\ldots,x_\ell\}\to \{1,2\}$ can be extended to a coloring of $(P_\sigma,L)$.
 \item For every coloring $f$ of $(P_\sigma,L)$, for every $i\in [\ell]$, it holds that $f(x_i)=f(y_{\sigma(i)})$.
\end{enumerate} 
\end{definition}
Again, we will shortly denote a permutation gadget for $\sigma$ by $P_\sigma$.
Intuitively, the role of a permutation gadget is to transfer the coloring of the input to the output, but after applying $\sigma$ on it.
We will show that we can efficiently construct permutation gadgets.

\begin{lemma}\label{lem:permutation-gadget}
Let $\ell\in \N$, and let $\sigma:[\ell]\to [\ell]$ be a permutation.
In time polynomial in $\ell$, we can construct a permutation gadget for $\sigma$.
\end{lemma}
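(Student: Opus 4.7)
The plan is to realize $P_\sigma$ by decomposing $\sigma$ into a sequence of adjacent transpositions and chaining individual transposition gadgets via \cref{obs:m7-combining}. Writing $\sigma=\tau_m\cdots\tau_1$ as a bubble-sort product (so $m\le\binom{\ell}{2}$), I build a link $P_{\tau_k}$ for each adjacent transposition $\tau_k$ and chain $P_{\tau_1},\ldots,P_{\tau_m}$ along their input/output vertices; since all identified vertices carry the list $\{1,2\}$, the combined object is again a link with a consistent list function, and property (P3) applied layer-by-layer shows that it realizes $\sigma$.

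The identity component is easy to describe. For $\sigma=\mathrm{id}$ (and for every position $j$ not moved by the current $\tau_k$), I introduce an intermediate $z_j$ of list $\{1,2\}$ with edges $x_jz_j,z_jy_j$, and order the vertices as $x_1,\ldots,x_\ell,z_1,\ldots,z_\ell,y_1,\ldots,y_\ell$. Each length-$2$ path with $\{1,2\}$-lists forces $f(y_j)=f(x_j)$, giving (P2) and (P3). A direct inspection shows that for every edge $x_jz_j$ or $z_jy_j$, the set strictly between its endpoints contains no other edge of the graph (the remaining edges are parallel length-$2$ paths in disjoint index ranges), so no induced $\rainbow$ appears.

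The main technical step is the swap sub-gadget inside $P_{\tau_k}$ for the two swapped positions $i,i{+}1$, where we need $f(y_i)=f(x_{i+1})$ and $f(y_{i+1})=f(x_i)$. The naive attempt---length-$2$ paths $x_i-v-y_{i+1}$ and $x_{i+1}-u-y_i$ with $u,v$ of list $\{1,2\}$---already fails: in either ordering of $u,v$, one of the 4-tuples $\{x_i,x_{i+1},u,v\}$ or $\{u,v,y_i,y_{i+1}\}$ carries exactly the two nested swap-arcs and no further edge, which is precisely an induced $\rainbow$; worse, each of the three candidate "breaking" edges among those four vertices forces $f(x_i)=f(x_{i+1})$ or $f(x_i)\ne f(x_{i+1})$, violating (P2). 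My approach is to combine the forcing intermediates $u,v$ of list $\{1,2\}$ with auxiliary vertices of list $\{1,2,3\}$ placed between them in the ordering; the auxiliaries contribute additional edges that insert a third edge into every would-be rainbow 4-tuple of the sub-gadget, and whenever a $\{1,2\}$-only choice is blocked they escape to color $3$, preserving feasibility of every input coloring in $\{1,2\}^2$.

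I expect the main obstacle to be the joint verification of the swap sub-gadget: simultaneously checking (a) that the $\{1,2\}$-forcing intermediates still impose $f(y_i)=f(x_{i+1})$ and $f(y_{i+1})=f(x_i)$ after the auxiliary edges are added; (b) that every 4-subset of the sub-gadget whose only edges would be two nested arcs now carries a third edge inside it; and (c) that no new rainbow crosses the boundary between the swap sub-gadget and the identity sub-gadgets on positions $j\notin\{i,i{+}1\}$. Since the sub-gadget has constant size, each of these reduces to a finite case analysis on 4-tuples, but (b) and (c) interact delicately and will require a careful choice of ordering for the auxiliaries; this case analysis is the most technical part of the argument.
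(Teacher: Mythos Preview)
Your high-level strategy---decompose $\sigma$ into elementary permutations and chain the corresponding gadgets via \cref{obs:m7-combining}---is exactly the paper's approach (\cref{obs:compose-permutation-gadgets} and \cref{obs:compose-rotations}). The paper uses rotations $\langle\ell;j,k\rangle$ (selection sort) rather than adjacent transpositions (bubble sort), but either decomposition is fine and both lead to a gadget of size $\Theta(\ell^2)$.

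The genuine gap is the construction of the elementary swap gadget, which you never actually give. Your own analysis correctly shows that with two $\{1,2\}$-listed intermediates $u,v$ there is always a rainbow $4$-tuple (e.g.\ $\{v,u,y_i,y_{i+1}\}$ when $v\prec u$), and that any edge inserted \emph{within that $4$-tuple} forces a relation between $f(x_i)$ and $f(x_{i+1})$, violating (P2). You then propose auxiliary vertices of list $\{1,2,3\}$ ``placed between $u$ and $v$''. But such an auxiliary is a new vertex; its incident edges are not edges among $\{v,u,y_i,y_{i+1}\}$, so they do nothing to that induced $4$-tuple. If instead you mean to replace $u$ or $v$ by a $\{1,2,3\}$-listed vertex so that the offending edge can be added, the $\{1,2\}$-path forcing $f(y_i)=f(x_{i+1})$ is lost. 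As stated, the proposal does not resolve the obstruction it identifies, and the ``finite case analysis'' you defer is not a mere bookkeeping step---it is the whole construction.

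The paper's rotation gadget (\cref{lem:rotation-gadget}) uses a genuinely different device that exploits all four colors. The central vertex $s^3_k$ carries list $\{3,4\}$ and is made adjacent to every other vertex in its layer $S_3$ (all with list $\{1,2\}$); since these lists are disjoint, this universality imposes no coloring constraint yet kills every potential rainbow with two vertices in $S_3$. The value $f(s^1_k)\in\{1,2\}$ is then transported to $s^3_k\in\{3,4\}$ via two parallel length-$2$ paths whose intermediates have lists $\{1,4\}$ and $\{2,3\}$: if $f(s^1_k)=1$ the $\{1,4\}$-intermediate is forced to $4$, pushing $s^3_k$ to $3$; if $f(s^1_k)=2$ the $\{2,3\}$-intermediate is forced to $3$, pushing $s^3_k$ to $4$. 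The symmetric trick on the output side decodes back into $\{1,2\}$ at the target position. This re-encoding into the disjoint pair $\{3,4\}$ is the missing idea; a single extra color $3$ cannot provide a two-state encoding disjoint from $\{1,2\}$, which is why your $\{1,2,3\}$-auxiliaries are unlikely to be made to work as described.
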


Let us break the proof of \cref{lem:permutation-gadget} in three steps.

First, we observe that  that chaining permutation gadgets yields a permutation gadget for the composition of permutations: this follows directly from \cref{obs:m7-combining} and the definition of a permutation gadget.

\begin{observation}\label{obs:compose-permutation-gadgets}
For $\ell \in \N$, let $\sigma,\sigma' : [\ell] \to [\ell]$ be two permutations and let $P_{\sigma},P_{\sigma'}$ be their corresponding permutation gadgets.
Let $P$ be obtained by chaining $P_{\sigma}$ with $P_{\sigma'}$.
Then, $P$ is a permutation gadget for $\sigma' \circ \sigma$.
\end{observation}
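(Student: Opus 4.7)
The plan is to verify each of the three properties (P1)--(P3) in the definition of a permutation gadget for the chained graph $P$, together with the fact that $P$ is a link. The latter is immediate from \cref{obs:m7-combining} applied to the two underlying links. By construction, the input vertices of $P$ are $(x_1,\ldots,x_\ell)$ coming from $P_\sigma$ and its output vertices are $(y'_1,\ldots,y'_\ell)$ coming from $P_{\sigma'}$, where we write $(x'_1,\ldots,x'_\ell)$ and $(y'_1,\ldots,y'_\ell)$ for the input and output vertices of $P_{\sigma'}$. Property (P1) for $P$ then follows at once from (P1) applied to $P_\sigma$ and $P_{\sigma'}$.

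For (P2), I would take an arbitrary $f:\{x_1,\ldots,x_\ell\}\to\{1,2\}$ and extend it twice. First, by (P2) applied to $P_\sigma$, we obtain an extension $f_\sigma$ coloring all of $P_\sigma$. By (P3) of $P_\sigma$, for every $i$ we have $f_\sigma(y_i)\in\{1,2\}$. Since the chaining identifies $y_i$ with $x'_i$, this yields a well-defined mapping $\{x'_1,\ldots,x'_\ell\}\to\{1,2\}$, which by (P2) of $P_{\sigma'}$ extends to a coloring $f_{\sigma'}$ of $P_{\sigma'}$. The two colorings $f_\sigma$ and $f_{\sigma'}$ agree on the identified vertices by construction, and they use disjoint internal vertices, so together they give a valid coloring of $(P,L)$ extending $f$.

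For (P3), given any coloring $f$ of $(P,L)$, its restrictions to $P_\sigma$ and $P_{\sigma'}$ are colorings of the respective gadgets. By (P3) of $P_\sigma$, for every $i\in[\ell]$ we have $f(x_i)=f(y_{\sigma(i)})=f(x'_{\sigma(i)})$. By (P3) of $P_{\sigma'}$ applied with $j=\sigma(i)$, we have $f(x'_{\sigma(i)})=f(y'_{\sigma'(\sigma(i))})$. Composing these equalities yields $f(x_i)=f(y'_{(\sigma'\circ\sigma)(i)})$, which is exactly the condition required for $P$ to be a permutation gadget for $\sigma'\circ\sigma$.

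There is no real obstacle here; the verification is entirely routine. The only thing to be careful about is bookkeeping: ensuring that the composition order is $\sigma'\circ\sigma$ rather than $\sigma\circ\sigma'$ (which follows from the direction of chaining, where inputs of $P_{\sigma'}$ are identified with outputs of $P_\sigma$), and noting that the list condition $L(y_i)=\{1,2\}$ is what allows the extension in (P2) to be invoked a second time.
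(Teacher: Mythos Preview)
Your proof is correct and follows exactly the approach the paper intends: the paper simply states that the observation ``follows directly from \cref{obs:m7-combining} and the definition of a permutation gadget,'' and your write-up is a faithful unpacking of that sentence, verifying the link property via \cref{obs:m7-combining} and then checking (P1)--(P3) componentwise. The bookkeeping on the composition order is also handled correctly.
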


Thus, in order to show \cref{lem:permutation-gadget}, it is enough to construct permutation gadgets for some basic permutations that can be composed into an arbitrary permutation.
These basic permutations are \emph{rotations}.
For integers $j \leq k \leq \ell$, a rotation, denoted by $\langle \ell; j,k\rangle$ is a permutation of $\ell$ such that:
\[
\langle \ell; j,k\rangle(i) = \begin{cases}
i & \text{ if }  i < j \text{ or } i > k\\
i+1 & \text{ if }  j \leq i < k\\
j & \text{ if }  i=k.
\end{cases}
\]
In other words, $\langle \ell; j,k\rangle$ performs a cyclic shift on elements $\{j,\ldots,k\}$, leaving the remaining ones untouched. In particular, if $j=k$, then $\langle \ell; j,k \rangle$ is an identity.

\begin{observation}\label{obs:compose-rotations}
    Every permutation $\sigma : [\ell] \to [\ell]$ can be written as the composition of $\ell-1$ rotations.
\end{observation}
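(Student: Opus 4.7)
The plan is to prove this by induction on $\ell$, using the classical ``fix the last coordinate and recurse'' trick, but with rotations instead of transpositions. The base case $\ell=1$ is trivial, as the only permutation of $[1]$ is the identity, which can be taken to be the empty composition (or, equivalently, the ``trivial'' rotation $\langle 1;1,1\rangle$ allowed by the formula when $j=k$).

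For the inductive step, given $\sigma : [\ell] \to [\ell]$, I would let $k := \sigma(\ell)$ and introduce the rotation $r := \langle \ell; k, \ell\rangle$ (which is the identity when $k = \ell$). A direct inspection of the piecewise definition of $\langle \ell; k, \ell \rangle$ gives $r^{-1}(k) = \ell$, so the permutation $\tau := r^{-1} \circ \sigma$ satisfies $\tau(\ell) = r^{-1}(\sigma(\ell)) = r^{-1}(k) = \ell$, i.e.\ $\tau$ fixes $\ell$ and therefore restricts to a permutation of $[\ell-1]$. By the inductive hypothesis, $\tau|_{[\ell-1]}$ can be written as a composition of $\ell-2$ rotations of $[\ell-1]$.

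To finish, I would observe that each rotation $\langle \ell-1; j', k'\rangle$ on $[\ell-1]$ is, verbatim, a rotation $\langle \ell; j', k'\rangle$ on $[\ell]$ (one just enlarges the ambient set, and the new element $\ell$ lies in the region fixed by the formula since $k' \leq \ell-1 < \ell$). Hence the $\ell-2$ rotations supplied by induction also realize $\tau$ on the whole of $[\ell]$; prepending $r$ then expresses $\sigma = r \circ \tau$ as a composition of $\ell-1$ rotations of $[\ell]$.

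There is no substantive obstacle; the only care required is the bookkeeping around the degenerate cases $k = \ell$ and $\ell = 1$, which are handled uniformly by allowing $\langle \ell; j,j\rangle$ to denote the identity (as the definition in the excerpt already does). The key computational insight, namely that $r^{-1}$ sends $k$ back to $\ell$ and thus neutralizes the action of $\sigma$ on the last coordinate, is what makes one rotation per inductive step suffice.
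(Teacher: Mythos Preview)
Your proof is correct and uses essentially the same idea as the paper's: fix one position with a single rotation and recurse on the rest --- selection sort with rotations in place of transpositions. The only cosmetic difference is that the paper runs left-to-right iteratively (at step $i$ it applies $\langle \ell; i, k\rangle$ to place the correct element at position $i$), whereas you run right-to-left inductively (first apply $\langle \ell; k, \ell\rangle$ to place the correct element at position $\ell$, then recurse on $[\ell-1]$).
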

\begin{proof}
    The construction mimics the selection sort algorithm.
    For $i = 1,2,\ldots,\ell-1$ we find, in the current permutation, the element $\sigma(i)$, i.e., the element that should be placed on position $i$ in the final permutation.
    Suppose it is on position $k$.
    Then, we apply the rotation $\langle \ell; i,k \rangle$ bringing $\sigma(i)$ to its desired position.
    Note that after this step, elements on positions $1,\ldots,i$ are already as they should be and we never need to touch them again. Thus, we increase $i$ by 1 and repeat.

    Let us describe it more formally.
    Let $\sigma_0$ be the identity permutation.
    Now, for $i \in [\ell-1]$, we define
    \begin{align*}
        k = \ & (\sigma_{i-1} \circ \ldots \circ \sigma_0\circ \sigma^{-1})(i)\\
        \sigma_i = \ & \langle \ell; i, k \rangle.
    \end{align*}
    
    It is straightforward to verify that $\sigma = \sigma_{\ell-1} \circ \ldots \circ \sigma_1$.
\end{proof}

Thus, in order to prove \cref{lem:permutation-gadget}, it is sufficient to show how to construct permutation gadgets for rotations.
We do this in the next lemma, consult also~\cref{fig:rotation-gadget}.

\begin{center}
\begin{figure}[t]
    \centering
   
    \begin{tikzpicture}[every node/.style={draw,circle,fill=white,inner sep=0pt,minimum size=8pt},every loop/.style={}]


\foreach \k in {1,2,3,4,5,6}
{
\node[fill=blue,opacity=0.5,label=below:{\k}] (s1\k) at (\k*0.5,0) {};
}

\node[fill=blue,opacity=0.5,label=below:{$1$}] (s51) at (15+1*0.5,0) {};
\node[fill=blue,opacity=0.5,label=below:{$2$}] (s52) at (15+2*0.5,0) {};
\node[fill=blue,opacity=0.5,label=below:{$5$}] (s53) at (15+3*0.5,0) {};
\node[fill=blue,opacity=0.5,label=below:{$3$}] (s54) at (15+4*0.5,0) {};
\node[fill=blue,opacity=0.5,label=below:{$4$}] (s55) at (15+5*0.5,0) {};
\node[fill=blue,opacity=0.5,label=below:{$6$}] (s56) at (15+6*0.5,0) {};


\foreach \k in {1,2,3,4,6}
{
\node[fill=blue,opacity=0.5] (s3\k) at (7.5+\k*0.5,0) {};
}
\node[fill=green,opacity=0.5] (s35) at (7.5+5*0.5,0) {};


\foreach \k in {1,2,3,4,7}
{
\node[fill=blue,opacity=0.5] (s2\k) at (3.5+\k*0.5,0) {};
}
\node[fill=yellow,opacity=0.5] (s25) at (3.5+5*0.5,0) {};
\node[fill=orange,opacity=0.5] (s26) at (3.5+6*0.5,0) {};

\foreach \k in {1,2,5,6,7}
{
\node[fill=blue,opacity=0.5] (s4\k) at (11+\k*0.5,0) {};
}

\node[fill=yellow,opacity=0.5] (s43) at (11+3*0.5,0) {};
\node[fill=orange,opacity=0.5] (s44) at (11+4*0.5,0) {};

\draw (s11) to [bend left] (s21);
\draw (s12) to [bend left] (s22);
\draw (s13) to [bend left] (s23);
\draw (s14) to [bend left] (s24);
\draw (s15) to [bend left] (s25);
\draw (s15) to [bend left] (s26);
\draw (s16) to [bend left] (s27);

\draw (s21) to [bend right] (s31);
\draw (s22) to [bend right] (s32);
\draw (s23) to [bend right] (s33);
\draw (s24) to [bend right] (s34);
\draw (s25) to [bend right] (s35);
\draw (s26) to [bend right] (s35);
\draw (s27) to [bend right] (s36);

\draw (s31) to [bend left] (s35);
\draw (s32) to [bend left] (s35);
\draw (s33) to [bend left] (s35);
\draw (s34) to [bend left] (s35);
\draw (s35) to [bend left] (s36);

\draw (s31) to [bend right] (s41);
\draw (s32) to [bend right] (s42);
\draw (s33) to [bend right] (s45);
\draw (s34) to [bend right] (s46);
\draw (s35) to [bend right] (s43);
\draw (s35) to [bend right] (s44);
\draw (s36) to [bend right] (s47);

\draw (s41) to [bend left] (s51);
\draw (s42) to [bend left] (s52);
\draw (s43) to [bend left] (s53);
\draw (s44) to [bend left] (s53);
\draw (s45) to [bend left] (s54);
\draw (s46) to [bend left] (s55);
\draw (s47) to [bend left] (s56);

\node[fill=blue,opacity=0.5,label=right:\footnotesize{$\{1,2\}$}] (l1) at (0.5,2) {};
\node[fill=green,opacity=0.5,label=right:\footnotesize{$\{3,4\}$}] (l2) at (0.5,1.5) {};
\node[fill=yellow,opacity=0.5,label=right:\footnotesize{$\{1,4\}$}] (l3) at (2.5,2) {};
\node[fill=orange,opacity=0.5,label=right:\footnotesize{$\{2,3\}$}] (l4) at (2.5,1.5) {};

\end{tikzpicture}
     \caption{A permutation gadget for the rotation $\langle \ell ; 3,5 \rangle$.}
    \label{fig:rotation-gadget}
\end{figure}
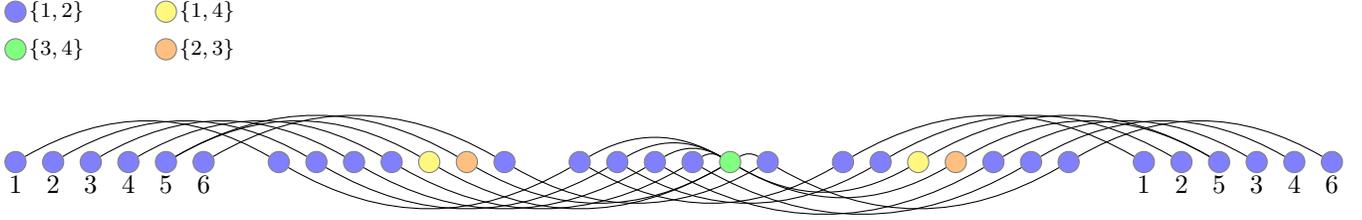
\end{center}

\begin{lemma}\label{lem:rotation-gadget}
Let $j < k \leq \ell\in \N$ be integers.
In time polynomial in $\ell$, we can construct a permutation gadget for $\langle \ell; j,k \rangle$.
\end{lemma}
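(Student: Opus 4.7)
The plan is to construct the gadget depicted in \cref{fig:rotation-gadget}, generalized for arbitrary $\ell, j, k$. The graph $P_{\langle \ell; j, k\rangle}$ has five layers of vertices, appearing in this order in the linear ordering. Layer~1 is the input tuple $(x_1, \ldots, x_\ell)$, each with list $\{1,2\}$. Layer~2 is a tuple $(u_1, \ldots, u_{\ell+1})$ where $L(u_k) = \{1,4\}$, $L(u_{k+1}) = \{2,3\}$, and $L(u_i) = \{1,2\}$ otherwise. Layer~3 is a tuple $(w_1, \ldots, w_\ell)$ with $L(w_k) = \{3,4\}$ and $L(w_i) = \{1,2\}$ otherwise, together with extra within-layer edges from $w_k$ to every other $w_i$. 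Layer~4 is a tuple $(u'_1, \ldots, u'_{\ell+1})$ mirroring Layer~2 but with the non-$\{1,2\}$ lists at positions $j$ and $j+1$. Layer~5 is the output tuple $(y_1, \ldots, y_\ell)$, each with list $\{1,2\}$. Between consecutive layers the edges form a ``copy--split--merge--split--merge'' pattern: $x_k$ is adjacent to both $u_k$ and $u_{k+1}$, both of which are adjacent to $w_k$, which is adjacent to both $u'_j$ and $u'_{j+1}$, both of which are adjacent to $y_j$. Positions outside the active range are copied through with appropriate index shifts; inside the range $[j, k-1]$ the edges from Layer~3 to Layer~4 shift the index by $+2$ to make room for the teleported value at position $j$, exactly as in the figure.

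To verify (P1)--(P3), I would propagate colors along the subgraph connecting each $x_i$ to $y_{\sigma(i)}$. Property (P1) is immediate. For every $i$ with $\sigma(i) \in \{i, i+1\}$, the relevant subgraph is an induced path of length four whose five vertices all have list $\{1,2\}$, so its edges force alternating colors and the color is preserved, giving $f(x_i) = f(y_{\sigma(i)})$. For $i = k$, the subgraph is the ``splitter--switcher--splitter'' gadget on $\{x_k, u_k, u_{k+1}, w_k, u'_j, u'_{j+1}, y_j\}$; a direct case analysis shows that $f(x_k) = 1$ forces $f(u_k) = 4$, then $f(w_k) = 3$ (since $f(w_k) \in \{3,4\}$ must avoid $4$), then $f(u_{k+1}) = 2$, then $f(u'_j) = 4$, $f(u'_{j+1}) = 2$, and finally $f(y_j) = 1$; the case $f(x_k) = 2$ is symmetric. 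The subgraphs for different indices use disjoint interior vertices apart from $w_k$, whose color is automatically disjoint from the $\{1,2\}$-lists used in the other chains, so the propagations do not interfere. Any input assignment therefore extends to a proper coloring, giving (P2) and (P3). The construction is clearly polynomial in $\ell$.

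The main obstacle is verifying $\rainbow$-freeness. Every edge of $P_{\langle \ell; j, k\rangle}$ either joins two consecutive layers or lies inside Layer~3 as a ``star'' edge incident to $w_k$. In a hypothetical induced $\rainbow$ on $v_1 \prec v_2 \prec v_3 \prec v_4$ with edges $v_1v_4, v_2v_3$, both edges cannot be within-Layer-3 star edges because they share $w_k$; hence the outer edge $v_1v_4$ must be an inter-layer edge. The key observation is that between any two adjacent layers the edge set is an ``almost-identity with a single local split'' at positions $\{k, k+1\}$ or $\{j, j+1\}$, and two disjoint edges under such a pattern are either crossing---producing a $\wave$ rather than a $\rainbow$---or share a splitter endpoint. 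If instead the inner edge $v_2v_3$ is a $w_k$-star edge, then $w_k$ is adjacent to the Layer-3 endpoint of $v_1v_4$ by the star structure, forcing an extra edge that violates the induced $\rainbow$ pattern. Ruling out these configurations together with the easier case where $v_1v_4$ and $v_2v_3$ span different adjacent layer pairs completes the proof. The delicate point is the bookkeeping around the splitter positions and the $w_k$-star, which is the technical heart of the argument.
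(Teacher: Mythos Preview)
Your construction coincides with the paper's, and your treatment of (P1)--(P3) is essentially right; the only quibble is that in your forcing chain for $i=k$ the value $f(u'_j)=4$ is not determined before $f(y_j)$ is---only the subpath through $u'_{j+1}$ forces $f(y_j)$ from $f(w_k)$---but the conclusion $f(x_k)=f(y_j)$ is unaffected.

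The genuine gap is in the $\rainbow$-freeness argument, specifically at the Layer~3--Layer~4 interface. That bipartite graph is not ``almost-identity with a single local split'': the non-$w_k$ part is a piecewise shift (by $0$, $+2$, $+1$ on the ranges $[1,j-1]$, $[j,k-1]$, $[k+1,\ell]$ respectively), while $w_k$ is sent far to the left, to $u'_j$ and $u'_{j+1}$. In particular, for any $i$ with $j\le i<k$ the two edges $w_i u'_{i+2}$ and $w_k u'_j$ are disjoint and \emph{nested}, not crossing, so your dichotomy ``either crossing or share a splitter endpoint'' fails here. This configuration is also not covered by your clause ``the inner edge $v_2v_3$ is a $w_k$-star edge'', since the inner edge $w_ku'_j$ is an inter-layer edge, not an intra-Layer-3 edge.

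What actually rescues the construction is again the star at $w_k$: in any such nested pair one Layer-3 endpoint is $w_k$ and the other is some $w_i$ with $i\neq k$, and the star edge $w_iw_k$ is the extra edge that destroys the induced $\rainbow$. So the star is needed not only when it is itself the inner edge, but also to kill nested inter-layer pairs between Layers~3 and~4. The paper handles this uniformly by asserting monotonicity only for pairs of disjoint inter-layer edges whose four endpoints span no further edge; the nested pairs above are then outside the scope of that claim precisely because $w_iw_k\in E$. Either route closes the gap, but your current case split does not.
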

\begin{proof}

The gadget has $5\ell +2$ vertices. We partition them into five sets $S_1,\ldots,S_5$ such that $|S_1|=|S_3|=|S_5|=\ell$ and $|S_2|=|S_4|=\ell+1$ in a natural way:
the first $\ell$ vertices belong to $S_1$, next $\ell + 1$ vertices belong to $S_2$ and so on.
For $p \in [5]$, the $i$-th vertex of $S_p$ is denoted by $s^p_i$.

The lists of vertices of the gadget are as follows:
\[
    L(s) =
    \begin{cases}
        \{1,4\} & \text{ if } s \in \{s^2_k, s^4_j\} \\
        \{2,3\} & \text{ if } s \in \{s^2_{k+1},s^4_{j+1}\}\\
        \{3,4\} & \text{ if } s = s^3_k \\
        \{1,2\} & \text{ otherwise.}
    \end{cases}
\]

The edge set of the gadget consists of the following elements:
\begin{description}
    \item[Between $S_1$ and $S_2$:] $s^1_is^2_i$ for $i\in [k]$ and $s^1_is^2_{i+1}$ for $i \in [k, \ell]$,
    \item[Between $S_2$ and $S_3$:] $s^2_is^3_i$ for $i\in [k]$ and $s^2_is^3_{i-1}$ for $i \in [k+1,\ell+1]$,
    \item[Inside $S_3$:]  $s^3_ks^3_i$ for $i\in[\ell] \setminus \{k\}$,
    \item[Between $S_3$ and $S_4$:] $s^3_is^4_i$ for $i\in [j-1]$ and $s^3_is^4_{i+2}$ for $i \in [j, k-1]$ and $s^3_ks^4_j,s^3_ks^4_{j+1}$, and $s^3_is^4_{i+1}$ for $i \in [k+1,\ell]$,
    \item[Between $S_4$ and $S_5$:] $s^4_is^5_i$ for $i \in [j]$ and $s^4_is^5_{i-1}$ for $i \in [j+1,\ell+1]$.        
\end{description}

This completes the construction of the gadget $P_{\langle \ell; j,k \rangle}$: the input vertices are $S_1$ and the output vertices are $S_5$.
Clearly,  $S_1$ and $S_5$ are disjoint and independent.
In order to show that the constructed graph is a link, let us verify that it is $\rainbow$-free.

\begin{claim}
    $P_{\langle \ell; j,k \rangle}$ is $\rainbow$-free.
\end{claim}
\begin{claimproof}
For contradiction, suppose there is an induced copy of $\rainbow$ in $P_{\langle \ell; j,k \rangle}$ and let $u_1,u_2,u_3,u_4$ be its consecutive vertices.
Recall that each edge of $P_{\langle \ell; j,k \rangle}$ either connects a vertex of $S_i$ with a vertex of $S_{i+1}$ for some $i\in[4]$ or is contained in $S_3$.
In particular, vertices of $S_5$ have no forward neighbors, so $u_1,u_2\notin S_5$.
Furthermore, for any two disjoint and non-adjacent edges $xy,wv$ such that $x,w\in S_i$ and $y,v\in S_{i+1}$, we have that if $x \prec w$, then $y \prec v$.
Therefore, the copy of $\rainbow$ cannot contain both edges that are between some sets $S_i$, $S_{i+1}$, and thus, at least one of the edges $u_1u_4$, $u_2u_3$ has to be contained in $S_3$.

If $u_1,u_4\in S_3$, then also $u_2,u_3\in S_3$ as we have $u_1 \prec u_2 \prec u_3 \prec u_4$.
However, all the edges inside $S_3$ share common vertex $s^3_k$, and thus $S_3$ cannot contain an induced copy of $\rainbow$.

So it must hold that $u_2,u_3\in S_3$ and $\{u_1,u_4\}\not\subseteq S_3$.
Therefore, either $u_1\in S_2$ and $u_4\in S_3$, or $u_1\in S_3$ and $u_4\in S_4$.
Since $u_2,u_3$ are adjacent, we have that one of $u_2,u_3$ has to be $s_3^k$, i.e., the universal vertex of $S_3$.
But then this vertex is adjacent to one of $u_1,u_4$ contained in $S_3$, so the copy of $\rainbow$ cannot be induced, a contradiction.
\end{claimproof}

It remains to verify that $P_{\langle \ell; j,k \rangle}$ satisfies the properties of a permutation gadget listed in \cref{def:permutation-gadget}. Property (P1) is obvious from the definition of $L$.

\begin{claim}\label{clm:rotationP2}
$P_{\langle \ell; j,k \rangle}$ satisfies property (P2).
\end{claim}
\begin{claimproof}
Let $f$ be a mapping $f: S_1\to \{1,2\}$.
We have to prove that $f$ can be extended to a coloring of $P_{\langle \ell; j,k \rangle}$, respecting lists $L$.
We will denote the extended coloring by $f$, too.

We will define $f$ vertex by vertex, according to the linear order of the vertex set.
At each step, for a vertex $s \in S_i$, we will choose for $s$ a color from $L(s)$ which is not used on any backward neighbor of $s$; note that all such neighbors are already colored when we color $s$.

Note that we only have to worry about edges between consecutive sets.
Indeed, the only edges in $S_3$ connect vertices with list $\{1,2\}$ with a vertex with list $\{3,4\}$, so they are irrelevant for the coloring.

\begin{description}
    \item[Vertices of $S_2$:] For $i\in [k-1]$, we color $s^2_i$ with color 2 (resp., 1) if $f(s^1_i)=1$ (resp.  $f(s^1_i)=2$).
        If $f(s^1_k)=1$, set the color of $s^2_k$ to 4 and the color of $s^2_{k+1}$ to 2.
        If $f(s^1_k)=2$, set the color of $s^2_k$ to 1 and the color of $s^2_{k+1}$ to 3.
        Finally, for $i \in [k+1,\ell]$, we set the color os $s^2_{i+1}$ to 2 (resp., 1) if $f(s^1_i)=1$ (resp., $f(s^1_i)=2$).

        Observe that the pair of colors appearing on $s^2_k,s^2_{k+1}$ is either $1,3$ or $2,4$.

        \item[Vertices of $S_3$:] For $i\in [k-1]$, we set the color of $s^3_i$ to 1 (resp. 2) if $f(s^2_i)=2$ (resp., $f(s^2_i)=2$).
        We also set the color of $s_3^k$ to 3 (resp., 4) if $f(s^2_k)=4$ and $f(s^2_{k+1})=2$ (resp., $f(s^2_k)=1$ and $f(s^2_{k+1})=3$).
        For $i \in [k+1,\ell]$, we set the color of $s^3_i$ to 1 (resp., 2) if $f(s_2^{i+1})=2$ (resp., $f(s_2^{i+1})=1$).        

        \item[Vertices of $S_4$:] For $i\in [j-1]$, we set the color of $s^4_i$ to 2 (resp., 1) if $f(s^3_i)=1$ (resp., $f(s^3_i)=2$).
        If $f(s_3^k)=3$, we set the color of $s^4_j$ to 4 and the color of $s^4_{j+1}$ to 2.
        If $f(s_3^k)=2$, we set the color of $s^4_j$ to 1 and the color of $s^4_{j+1}$ to 3.
        For $i \in [j, k-1]$, we set the color of $s^4_{i+2}$ to 2 (resp., 1) if $f(s^3_i)=1$ (resp., $f(s^3_i)=2$).
        For $i \in [k+1,\ell]$, we set the color of $s^4_{i+1}$ to 2 (resp., 1) if $f(s^3_{i})=1$ (resp. $f(s^3_{i})=2$).

        Similarly as in $S_2$, the pair of colors appearing on $s^4_j,s^4_{j+1}$ is either $4,2$ or $1,3$.

        \item[Vertices of $S_5$:] For every $i\in[j-1]$, we set the color of $s^5_i$ to 2 (resp., 1) if $f(s^4_i)=1$ (resp., $f(s^4_i)=2$).
        We also set the color of $s_5^j$ to 1 (resp., 2) if $f(s^2_j)=4$ and $f(s^2_{j+1})=2$ (resp., $f(s^2_j)=1$ and $f(s^2_{j+1})=3$).    
        For every $i\in[j+1,\ell]$, we set the color of $s^5_i$ to 2 (resp., 1) if $f(s^4_{i+1})=1$ (resp., $f(s^4_{i+1})=2$).
\end{description}
As, at the moment of coloring every vertex $v$, we chose a color distinct from the ones used for all backwards neighbors of $v$, we found a coloring of the gadget hat extends $f$.
This completes the proof of the claim.
\end{claimproof}

\begin{claim}\label{clm:rotationP3}
$P_{\langle \ell; j,k \rangle}$ satisfies property (P3).
\end{claim}
\begin{claimproof}
Let $f: V(S_1 \cup S_2 \cup S_3 \cup S_4 \cup S_5)\to [4]$ be a coloring of the gadget, respecting list $L$, and let $i\in [\ell]$.
We need to show that $f(s^1_i)=f(s^5_{\langle \ell; j,k \rangle(i)})$.
We consider cases, depending on the position of $i$ with respect to $j$ and $k$.

First assume that either (a) $i\in [j-1]$ or (b) $i \in [j, k-1]$, or (c) $i \in [k+1,\ell]$.
We have $\langle \ell; j,k \rangle(i) = i$ in cases (a) and (c), and $\langle \ell; j,k \rangle(i) = i+1$ in case (b).
In all cases, $P_{\langle \ell; j,k \rangle}$ has a 5-vertex path joining $s^1_i$ and $s^5_{\langle \ell; j,k \rangle(i)}$, whose all vertices have lists $\{1,2\}$. Depending on the case, the path is:
\begin{align*}
    \text{(a)} & \quad s^1_i - s^2_i - s^3_i - s^4_i - s^5_i \\
    \text{(b)} & \quad s^1_i - s^2_i - s^3_i - s^4_{i+2} - s^5_{i+1} \\
    \text{(c)} & \quad s^1_i - s^2_{i+1} - s^3_i - s^4_{i+1} - s^5_i.
\end{align*}
In any coloring of such a path, respecting the lists, the endpoints have the same color.

It remains to consider the case  $i=k$. Note that we have $\langle \ell; j, k \rangle (k) = j$.

Suppose first that $f(s^1_k)=1$ and consider the path $s^1_k - s^2_k - s^3_k - s^4_{j+1} - s^5_j$.
The lists of consecutive vertices are $\{1,2\}, \{1,4\}, \{3,4\}, \{2,3\}, \{1,2\}$.
Thus, if $f(s^1_k)=1$, the colors on this path are forced to be $1,4,3,2,1$.
In particular, $f(s^5_j) = 1$.

So now assume that $f(s^1_k)=2$ and consider the path $s^1_k - s^2_{k+1} - s^3_k - s^4_{j} - s^5_j$ with lists of consecutive vertices
$\{1,2\}, \{2,3\}, \{3,4\}, \{1,4\}, \{1,2\}$.
If $f(s^1_k)=2$, the colors on this path are forced to be $2,3,4,1,2$.
In particular, $f(s^5_j) = 2$.
\end{claimproof}

Thus, $P_{\langle \ell; j,k \rangle}$ is indeed a permutation gadget for ${\langle \ell; j,k \rangle}$, which was to be shown.
\end{proof}

Now, \cref{lem:permutation-gadget} follows directly from \cref{obs:compose-permutation-gadgets}, \cref{obs:compose-rotations}, and \cref{lem:rotation-gadget},

\subsubsection{Indicator gadgets}\label{sec:ind-gadget}
The next type of a gadget that we will need is an \emph{indicator}.

Throughout this subsection, let $n \in \N$ and let $I \subseteq \binom{[n]}{2}$ be a set of pairwise disjoint pairs  from of $[n]$, each of the form $\{i,i+1\}$, and let $N=n+|I|$.
Let us define a bijection $\gamma: [n] \cup I \to [n + |I|]$.
We will do it by specifying the relative order of images of particular elements of $[n] \cup I$.
The images of elements in $[n]$ are in the natural order, i.e., $\gamma(1) < \gamma(2) < \ldots < \gamma(n)$.
For each $\{i,i+1\} \in I$, the image of this pair is placed between $\gamma(i)$ and $\gamma(i+1)$.
Note that, as the pairs in $I$ are of the form $\{i,i+1\}$, $\gamma$ is well-defined.

\begin{definition}[Indicator gadget]\label{def:indicator}
    Let $c \in \{1,2\}$.
    An \emph{indicator gadget} (for $c,n$ and $I$) is a tuple $(\Ind_{c},(x_1,\ldots,x_n),$ $(y_1,\ldots,y_N),L)$,
    where $(\Ind_c,(x_1,\ldots,x_n),(y_1,\ldots,y_N))$ is a link and $L:V(\Ind_c)\to 2^{[4]}$ is a list function, such that:
    \begin{enumerate}[({I}1)]
        \item For every $i\in[n]$ and $j \in [N]$, it holds that $L(x_i)=L(y_j)=\{1,2\}$.
        
        \item Every $f:\{x_1,\ldots,x_n\}\to\{1,2\}$ can be extended to a coloring of the gadget so that the following holds.
        For every $\{i,i+1\}\in I$, if $\{f(x_i),f(x_{i+1})\}\neq \{c\}$, then $f(y_{\gamma(\{i,i+1\})}) \neq c$.
        
        \item For every coloring $f$ of the gadget, the following holds.
        For every $i\in[n]$, we have $f(x_i)=f(y_{\gamma(i)})$, and for every $\{i,i+1\}\in I$, if $f(x_i)=f(x_{i+1})=c$,
        then $f(y_{\gamma(\{i,i+1\})})=c$.
    \end{enumerate}    
\end{definition}
Note that each output vertex of the gadget corresponds either to an input vertex,
or to a pair in $I$; this correspondence is given by $\gamma$.
The gadget plays two roles. First, the coloring of input vertices is copied to their corresponding output vertices.
Second, for each pair in $I$, the color of its corresponding output vertex \emph{indicates}
if both input vertices representing this pair are colored $c$.

\begin{center}
\begin{figure}[t] 
    \begin{tikzpicture}[every node/.style={draw,circle,fill=white,inner sep=0pt,minimum size=8pt},every loop/.style={}]
\foreach \k in {1,2,3,4,5,6}
{
\node[fill=blue,opacity=0.5] (v1\k) at (-0.4+\k*0.4,0) {};
}

\foreach \k in {1,2,5,6,9,10}
{
\node[fill=blue,opacity=0.5] (v2\k) at (2.2+\k*0.4,0) {};
}
\foreach \k in {3,7}
{
\node[fill=orange,opacity=0.5] (v2\k) at (2.2+\k*0.4,0) {};
}
\foreach \k in {4,8}
{
\node[fill=yellow,opacity=0.5] (v2\k) at (2.2+\k*0.4,0) {};
}

\draw (v11) to [bend right] (v21);
\draw (v12) to [bend right] (v22);
\draw (v12) to [bend right] (v23);
\draw (v13) to [bend right] (v24);
\draw (v13) to [bend right] (v25);
\draw (v14) to [bend right] (v26);
\draw (v14) to [bend right] (v27);
\draw (v15) to [bend right] (v28);
\draw (v15) to [bend right] (v29);
\draw (v16) to [bend right] (v210);

\foreach \k in {1,2,4,5,7,8}
{
\node[fill=blue,opacity=0.5] (v3\k) at (6.4+\k*0.4,0) {};
}
\foreach \k in {3,6}
{
\node[fill=green,opacity=0.5] (v3\k) at (6.4+\k*0.4,0) {};
}

\draw (v21) to [bend left] (v31);
\draw (v22) to [bend left] (v32);
\draw (v23) to [bend left] (v33);
\draw (v24) to [bend left] (v33);
\draw (v25) to [bend left] (v34);
\draw (v26) to [bend left] (v35);
\draw (v27) to [bend left] (v36);
\draw (v28) to [bend left] (v36);
\draw (v29) to [bend left] (v37);
\draw (v210) to [bend left] (v38);

\foreach \k in {1,2,3,4,5,6,7,8}
{
\node[fill=blue,opacity=0.5] (v4\k) at (9.8+\k*0.4,0) {};
}

\foreach \k in {1,2,3,4,5,6,7,8}
{
\draw (v3\k) to [bend right] (v4\k);
}

\foreach \k in {1,2,3,4,5,6,7,8}
{
\node[fill=blue,opacity=0.5] (v5\k) at (13.2+\k*0.4,0) {};
\draw (v4\k) to [bend left] (v5\k);
}

\node[fill=blue,opacity=0.5,label=right:\footnotesize{$\{1,2\}$}] (l1) at (0.5,2) {};
\node[fill=orange,opacity=0.5,label=right:\footnotesize{$\{1,3\}$}] (l2) at (0.5,1.5) {};
\node[fill=yellow,opacity=0.5,label=right:\footnotesize{$\{1,4\}$}] (l3) at (2.5,2) {};
\node[fill=green,opacity=0.5,label=right:\footnotesize{$\{1,3,4\}$}] (l4) at (2.5,1.5) {};

\end{tikzpicture}
     \caption{Gadget $\Ind_{1}$ for $n=6$ and $I=\{\{2,3\},\{4,5\}\}$.}
    \label{fig:ind-gadget}
\end{figure}
\end{center}

In the following lemma we show how to construct indicator gadgets; consult also~\cref{fig:ind-gadget}.

\begin{lemma}\label{lem:ind-gadget}
     Let $c \in \{1,2\}$, let $n \in\N$ and let $I\subseteq \binom{[n]}{2}$ be a set of pairwise disjoint pairs from of $[n]$, each of the form $\{i,i+1\}$.
     In time polynomial in $n$, we can construct an indicator gadget for $c,n$, and $I$.
\end{lemma}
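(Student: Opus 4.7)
The plan is to construct $\Ind_c$ explicitly; I handle $c = 1$, since the case $c = 2$ is obtained by interchanging the roles of colours $1$ and $2$ in every list.

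The gadget will consist of five layers $V_1, V_2, V_3, V_4, V_5$ with edges only between consecutive layers, mirroring \cref{fig:ind-gadget}. The input layer $V_1 = (x_1, \ldots, x_n)$ has all lists $\{1,2\}$. For each $i \in [n]$ not appearing in any pair of $I$, each of $V_2, V_3, V_4, V_5$ contains one dedicated vertex with list $\{1,2\}$, matched along a $5$-vertex pass-through path rooted at $x_i$. For each pair $\{i, i+1\} \in I$, I place inside $V_2$ a block of four consecutive vertices with lists $\{1,2\}, \{1,3\}, \{1,4\}, \{1,2\}$; the vertex $x_i$ is adjacent to the first two and $x_{i+1}$ is adjacent to the last two. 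In $V_3$, the two $\{1,2\}$-vertices of the block continue along their own pass-through paths, while the $\{1,3\}$- and $\{1,4\}$-vertices both connect to a single vertex with list $\{1,3,4\}$ (which I call \emph{green}) placed between them. The layers $V_4$ and $V_5$ each contain $N = n + |I|$ vertices with list $\{1,2\}$, joined to the previous layer by a perfect matching. The output sequence $(y_1, \ldots, y_N)$ is $V_5$ in its natural order, and by construction this order realizes the bijection $\gamma$.

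To verify that this is a link I would check $\rainbow$-freeness. Since the edge set is contained in $\bigcup_j E(V_j, V_{j+1})$ and there are no edges within a single layer, any induced copy of $\rainbow$ on $u_1 \prec u_2 \prec u_3 \prec u_4$ must have $u_1 u_4$ and $u_2 u_3$ in the same inter-layer bipartite graph, with $u_1, u_2 \in V_j$ and $u_3, u_4 \in V_{j+1}$. A case inspection of each of the four consecutive pairs of layers shows the edges are \emph{order preserving}: each $V_j$-vertex reaches a contiguous range in $V_{j+1}$, and different $V_j$-vertices reach disjoint ranges in the natural order (the only sharing occurs at a green vertex of $V_3$, where the two $V_2$-vertices reaching it are at adjacent positions). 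This rules out the crossing pattern of $\rainbow$.

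Property~(I1) is immediate from the lists on $V_1$ and $V_5$. For~(I3), each $i \in [n]$ has a $5$-vertex path of $\{1,2\}$-vertices from $x_i$ to $y_{\gamma(i)}$, which forces $f(x_i) = f(y_{\gamma(i)})$ by the alternation on two-element lists. For a pair $\{i, i+1\} \in I$ with $f(x_i) = f(x_{i+1}) = 1$, the $\{1,3\}$-vertex (adjacent to $x_i$) is forced to $3$ and the $\{1,4\}$-vertex (adjacent to $x_{i+1}$) to $4$, so the adjacent green vertex with list $\{1,3,4\}$ is forced to $1$; alternation then propagates through $V_4$ and $V_5$ to give $f(y_{\gamma(\{i,i+1\})}) = 1$. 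For~(I2), given any $f : \{x_1, \ldots, x_n\} \to \{1, 2\}$, I extend greedily: each pass-through path admits a unique alternating extension, and for each pair $\{i, i+1\} \in I$ with $(f(x_i), f(x_{i+1})) \ne (1,1)$ I choose values inside the block so that the green vertex receives a colour in $\{3, 4\}$, and then pick the intermediate $V_4$-vertex so that $y_{\gamma(\{i,i+1\})}$ receives colour $2$.

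The main technical point is this last extension step: one must enumerate the three configurations $(f(x_i), f(x_{i+1})) \in \{(1,2), (2,1), (2,2)\}$ and in each exhibit a compatible assignment. For $(1,2)$ set the $\{1,4\}$-vertex to $1$, forcing green $= 4$; for $(2,1)$ set the $\{1,3\}$-vertex to $1$, forcing green $= 3$; for $(2,2)$ set both to $1$ and choose green $\in \{3,4\}$. The construction is clearly computable in polynomial time in $n$.
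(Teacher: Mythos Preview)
Your construction and verification are essentially identical to the paper's: five independent layers with order-preserving bipartite edges between consecutive layers, a $\{1,3\}$/$\{1,4\}$ pair in $V_2$ feeding a $\{1,3,4\}$ ``green'' vertex in $V_3$ for each pair in $I$, and pass-through $\{1,2\}$-paths otherwise. The only minor omission is that in verifying~(I2) you do not explicitly treat the case $(f(x_i),f(x_{i+1}))=(1,1)$, but the extension there is forced and valid, so the argument goes through.
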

\begin{proof}
    By symmetry, assume that $c=1$.    
    Let $\gamma$ be the function defined for $n$ and $I$ as before.
    
    The vertex set of the gadget is partitioned into five sets $V_1,\ldots,V_5$, where, for all $i \in [4]$, all vertices of $V_{i}$ precede all vertices from $V_{i+1}$.
    We will construct the sets one by one.
    
\begin{description}
    \item[Set $V_1$:] We start with introducing the vertices $v^1_1,\ldots,v^1_n$ (in that order), all with list $\{1,2\}$.
    \item[Set $V_2$:] Next, we introduce vertices $v^2_1,\ldots,v^2_n$ (in that order), all with list $\{1,2\}$.
                Next, for every $\{i,i+1\}\in I$, we add vertices $u^2_i,w^2_i$, with lists, respectively, $\{1,3\}$ and $\{1,4\}$.
                We position them in the order so that $v^2_i \prec u^2_i \prec w^2_i \prec v^2_{i+1}$.
                For every $i\in [n]$, we add the edge $v^1_iv^2_i$, and for every $\{i,i+1\}\in I$, we add edges $v^1_iu^2_i$ and $v^1_{i+1}w^2_i$.

    \item[Set $V_3$:] We introduce vertices $v^3_1,\ldots,v^3_n$ (in that order), all with list $\{1,2\}$.
                For every $\{i,i+1\}\in I$, we add a vertex $z^3_i$, with list $\{1,3,4\}$.
                We insert $z^3_i$ between $v^3_i$ and $v^3_{i+1}$.
                For every $i\in [n]$, we add the edge $v^2_iv^3_i$, and for every $\{i,i+1\}\in I$, we add edges $u^2_iz^3_i$ and $w^2_iz^3_i$.

    \item[Set $V_4$:] We introduce  vertices $v^4_1,\ldots,v^4_n$ (in that order), all with list $\{1,2\}$.
                For every $\{i,i+1\}\in I$, we add vertex $z^4_i$ with list $\{1,2\}$.
                We insert $z^4_i$ between $v^4_i$ and $v^4_{i+1}$.
                For every $i\in [n]$, we add the edge $v^3_iv^4_i$, and for every $\{i,i+1\}\in I$, we add the edge $z^3_iz^4_i$.

    \item[Set $V_5$:] We introduce vertices $v^5_1,\ldots,v^5_n$ (in that order), all with list $\{1,2\}$.
                For every $\{i,i+1\}\in I$, we add a vertex $z^5_i$ with list $\{1,2\}$.
                We insert $z^5_i$ between $v^5_i$ and $v^5_{i+1}$.
                For every $i\in [n]$, we add the edge $v^4_iv^5_i$, and for every $\{i,i+1\}\in I$, we add the edge $z^4_iz^5_i$.
\end{description}
This completes the construction of $\Ind_1$; the input vertices are $V_1$ and the output vertices are $V_5$.
Note that, for each $i \in [n]$, the $\gamma(i)$-th output vertex is $v^5_i$,
and for each $\{i,i+1\} \in I$, the $\gamma(\{i,i+1\})$-th output vertex is $z^5_i$.

Let us argue that $\Ind_1$ is an indicator gadget for 1, $n$, $I$.

\begin{claim}
    $\Ind_{1}$ is $\rainbow$-free.
\end{claim}
\begin{claimproof}
    Observe that each of the sets $V_1,\ldots,V_5$ is independent and the only edges are between pairs of consecutive sets.
    Moreover, by the construction, for two disjoint edges $uv,xy$ such that $u,x\in V_{i}$ and $v,y\in V_{i+1}$, it holds that if $u \prec x$, then $v \prec y$, so $\Ind_{1}$ does not contain $\rainbow$ as a subgraph (even not necessarily induced).
\end{claimproof}

Now, let us verify properties from \cref{def:indicator}. Property (I1) is obvious.

\begin{claim}\label{clm:I2}
    $\Ind_{1}$ satisfies (I2) for $\gamma$.
\end{claim}
\begin{claimproof}
Let $f:\{v^1_1,\ldots,v^1_n\}\to \{1,2\}$.

We aim to extend $f$ to a coloring of the whole gadget, so that, for every $\{i,i+1\} \in I$,
if $\{ f(v^1_i),f(v^1_{i+1}) \} \neq \{1\}$, then the color of $v^5_{\gamma(\{i,i+1\})}$ is 2.
Slightly abusing the notation, we will call the extended coloring $f$.

For $i\in [n]$, if $f(v^1_i)=1$ (resp., $f(v^1_i)=2$), then we set $f(v^2_i)=f(v^4_i)=2$ (resp., $f(v^2_i)=f(v^4_i)=1$) and $f(v^3_i)=f(v^5_i)=1$ (resp., $f(v^3_i)=f(v^5_i)=1$).
Note that the vertices $v^p_i$ for $p \in \{2,3,4,5\}$ have no other neighbors of the gadget.

Now consider a pair $\{i,i+1\}\in I$.
If $\{f(v^1_i),f(v^1_{i+1})\}=\{1\}$, then we set $f(u^2_i)=3$ and $f(w^2_i)=4$, and we set $f(z^3_i)=f(z^5_i)=1$ and $f(z^4_i)=2$.
If $\{f(x'_i),f(x'_{i+1})\}\neq\{1\}$, then at least one of $v^1_i,v^1_{i+1}$ is colored with $2$, and thus at least one of $u^2_i,w^2_i$ can be colored with $1$. Then $z^3_i$ can be colored with a color from $\{3,4\}$.
We finish the coloring by setting $f(z^4_i)=1$ and $f(z^5_i)= 2$, as was required for this $\{i,i+1\}$. 
Thus, the property (I2) is indeed satisfied. 
\end{claimproof}

\begin{claim}\label{clm:I3}
    $\Ind_{1}$ satisfies (I3) for $\gamma$.
\end{claim}
\begin{claimproof}
Let $f$ be a coloring of $(\Ind_{1},L)$.
Note that, for each $i \in [n]$, there is a path $v^1_i - v^2_i - v^3_i - v^4_i - v^5_i$ with all vertices with list $\{1,2\}$.
Consequently, the color of $v^5_i$ is the same as the color of $v^1_i$, and these two vertices are paired by $\gamma$.

Now let $\{i,i+1\}\in I$ be such that $f(v^1_i)=f(v^1_{i+1})=1$.
Then it must hold that $f(u^2_i)=3$ and $f(w^2_i)=4$ as $u^2_i,w^2_i$ are adjacent, respectively, to $v^1_i$ and $v^1_{i+1}$,
and have lists, respectively, $\{1,3\}$ and $\{1,4\}$.
Since $z^3_i$ is adjacent to both $u^2_i,w^2_i$ and has list $\{1,3,4\}$, it holds $f(z^3_i)=1$.
Thus, the path $z^3_i - z^4_i - z^5_i$ whose all vertices have list $\{1,2\}$ forces the color of $z^5_i$ to 1.
This vertex precisely corresponds to $\{i,i+1\}$ via $\gamma$.
So, property (I3) is satisfied.
\end{claimproof}

Thus, $\Ind_1$ is indeed an indicator gadget.
This completes the proof of the lemma.
\end{proof}

\subsubsection{NOT-$cc$ gadgets}\label{sec:notcc-gadget}
The last type of a gadget is a NOT-$cc$ gadget, for $c \in \{1,2\}$.

Throughout this subsection, let $N \in \N$ and let $I \subseteq \binom{[N]}{2}$ be a set of pairwise disjoint pairs from of $[N]$, each of the form $\{i,i+1\}$.
Let $n = N - |I|$; note that $n > 0$.
Let $\delta : [n] \to [N]$ be an injection defined as follows.
The image of this function is the set of these $i$, for which $\{i,i+1\} \notin I$.
Furthermore, we have $\delta(1) < \delta(2) < \ldots < \delta(n)$.
Note that $\delta$ is well-defined.

\begin{definition}[NOT-$cc$]\label{def:ncc}
    Let $c \in \{1,2\}$.
    An \emph{NOT-$cc$ gadget} (for $N$ and $I$) is a tuple $(B_{c},(x_1,\ldots,x_N),(y_1,\ldots,y_n),L)$,
    where $(B_c,(x_1,\ldots,x_N),(y_1,\ldots,y_n))$ is a link and $L:V(B_c)\to 2^{[4]}$ is a list function, such that:
    \begin{enumerate}[({B}1)]
        \item For every $i\in[N]$ and $j \in [n]$, it holds that $L(x_i)=L(y_j)=\{1,2\}$.
        
        \item Every $f:\{x_1,\ldots,x_N\}\to\{1,2\}$ such that for every $\{i,i+1\} \in I$ it holds that $\{f(x_1),f(x_2)\} \neq \{c\}$,
        can be extended to a coloring of the gadget.
        
        \item For every coloring $f$ of the gadget, the following holds.
        For every $i\in[n]$, we have $f(y_i)=f(x_{\delta(i)})$, and for every $\{i,i+1\}\in I$ we have $\{ f(x_i),f(x_{i+1}) \} \neq \{c\}$.
    \end{enumerate}    
\end{definition}
Note that there are two types of input vertices: those $x_i$ for which $\{i,i+1\} \notin I$, and the remaining ones.
The former ones have corresponding output vertices; this correspondence is given by $\delta$.
The remaining input vertices do not have corresponding output vertices.

Again, the gadget plays two roles. First, it transfers the coloring of  input vertices of the first type to their corresponding output vertices.
Second, for each pair in $I$, the gadget ensures that input vertices corresponding to that pair are not both colored $c$.

\begin{center}
    \begin{figure}[t]
        \centering
   
\begin{tikzpicture}[every node/.style={draw,circle,fill=white,inner sep=0pt,minimum size=8pt},every loop/.style={}]


\foreach \k in {1,2,3,4,5,6,7,8}
{
\node[fill=blue,opacity=0.5] (v6\k) at (-0.4+\k*0.4,0) {};
}

\foreach \k in {1,2,5,6,9,10}
{
\node[fill=blue,opacity=0.5] (v7\k) at (3+\k*0.4,0) {};
}
\foreach \k in {3,7}
{
\node[fill=orange,opacity=0.5] (v7\k) at (3+\k*0.4,0) {};
}
\foreach \k in {4,8}
{
\node[fill=yellow,opacity=0.5] (v7\k) at (3+\k*0.4,0) {};
}

\draw (v61) to [bend left] (v71);
\draw (v62) to [bend left] (v72);
\draw (v63) to [bend left] (v73);
\draw (v64) to [bend left] (v74);
\draw (v64) to [bend left] (v75);
\draw (v65) to [bend left] (v76);
\draw (v66) to [bend left] (v77);
\draw (v67) to [bend left] (v78);
\draw (v67) to [bend left] (v79);
\draw (v68) to [bend left] (v710);

\foreach \k in {1,2,4,5,7,8}
{
\node[fill=blue,opacity=0.5] (v8\k) at (7.2+\k*0.4,0) {};
}
\foreach \k in {3,6}
{
\node[fill=green,opacity=0.5] (v8\k) at (7.2+\k*0.4,0) {};
}

\draw (v71) to [bend right] (v81);
\draw (v72) to [bend right] (v82);
\draw (v73) to [bend right] (v83);
\draw (v74) to [bend right] (v83);
\draw (v75) to [bend right] (v84);
\draw (v76) to [bend right] (v85);
\draw (v77) to [bend right] (v86);
\draw (v78) to [bend right] (v86);
\draw (v79) to [bend right] (v87);
\draw (v710) to [bend right] (v88);

\foreach \k in {1,2,3,4,5,6}
{
\node[fill=blue,opacity=0.5] (v9\k) at (10.6+\k*0.4,0) {};
\node[fill=blue,opacity=0.5] (v10\k) at (13.4+\k*0.4,0) {};
\draw (v9\k) to [bend right] (v10\k);
}

\draw (v81) to [bend left] (v91);
\draw (v82) to [bend left] (v92);
\draw (v84) to [bend left] (v93);
\draw (v85) to [bend left] (v94);
\draw (v87) to [bend left] (v95);
\draw (v88) to [bend left] (v96);

\node[fill=blue,opacity=0.5,label=right:\footnotesize{$\{1,2\}$}] (l1) at (0.5,2) {};
\node[fill=orange,opacity=0.5,label=right:\footnotesize{$\{1,3\}$}] (l2) at (0.5,1.5) {};
\node[fill=yellow,opacity=0.5,label=right:\footnotesize{$\{1,4\}$}] (l3) at (2.5,2) {};
\node[fill=green,opacity=0.5,label=right:\footnotesize{$\{3,4\}$}] (l4) at (2.5,1.5) {};

\end{tikzpicture}
     \caption{A NOT-11 gadget for $n=6$ and $I=\{\{2,3\},\{4,5\}\}$.}
    \label{fig:notcc-gadget}
\end{figure}
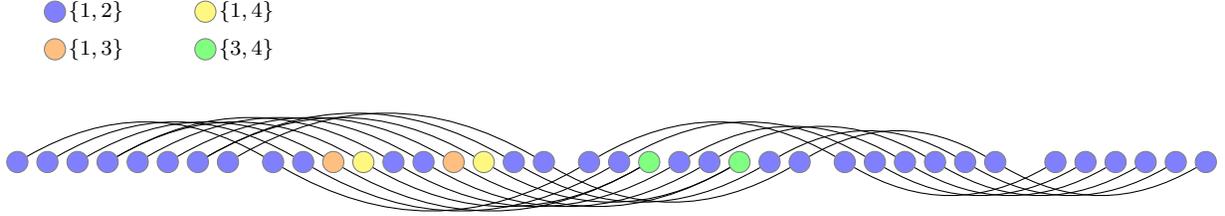
\end{center}

In the following lemma we show how to construct NOT-$cc$ gadgets; consult also~\cref{fig:notcc-gadget}.

\begin{lemma}
     Let $c \in \{1,2\}$, let $N \in\N$ and let $I\subseteq \binom{[N]}{2}$ be a set of pairwise disjoint pairs from of $[N]$, each of the form $\{i,i+1\}$. Let $n = N - |I|$.
     In time polynomial in $N$, we can construct a NOT-$cc$ gadget for $I$.
\end{lemma}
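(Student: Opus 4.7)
The plan is to construct the NOT-$cc$ gadget by adapting the five-layer design of the indicator gadget $\Ind_c$ from \cref{lem:ind-gadget}. By symmetry, we may assume $c=1$.

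The construction mirrors $\Ind_1$ for $N$ inputs and the pair set $I$, with two modifications. First, for each pair $\{i,i+1\}\in I$, we shrink the list of the layer-3 pair-indicator vertex (playing the role of $z^3_i$) from $\{1,3,4\}$ to $\{3,4\}$. This forbids color $1$ at $z^3_i$, which, through the neighbors $u^2_i$ (list $\{1,3\}$) and $w^2_i$ (list $\{1,4\}$), forces at least one of $x_i, x_{i+1}$ to receive color $2$, delivering the ``not-both-$1$'' condition. Second, for each pair $\{i,i+1\}\in I$, we delete the entire vertical chain $v^2_i, v^3_i, v^4_i, v^5_i$ that would propagate the color of $x_i$ to the output, as well as the chain $z^4_i, z^5_i$ propagating the pair-indicator's color; neither is needed. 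What survives in $V_4$ and $V_5$ is exactly one vertex per index in the image of $\delta$, joined by a matching, and the outputs are $y_k := v^5_{\delta(k)}$ for $k\in[n]$.

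To verify correctness, the $\rainbow$-freeness argument is identical to that of \cref{lem:ind-gadget}: each layer is independent, every edge lies between two consecutive layers, and on each bipartite slice the linear orders on the two sides remain compatible (if $u \prec u'$ are in layer $V_p$ and $v, v'$ in $V_{p+1}$ are their partners, then $v \prec v'$), so no $\rainbow$ arises even as a non-induced subgraph. Property (B1) is immediate. For (B3), each $k\in[n]$ has a path $x_{\delta(k)} - v^2_{\delta(k)} - v^3_{\delta(k)} - v^4_{\delta(k)} - v^5_{\delta(k)}$ consisting entirely of $\{1,2\}$-list vertices, forcing $f(y_k) = f(x_{\delta(k)})$; and for each $\{i,i+1\}\in I$, since $f(z^3_i) \in \{3,4\}$, if $f(z^3_i)=3$ then $u^2_i = 1$ (as $L(u^2_i)=\{1,3\}$ and $u^2_i$ is adjacent to $z^3_i$) and hence $f(x_i)=2$, while if $f(z^3_i)=4$ then analogously $f(x_{i+1})=2$. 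For (B2), given an input coloring avoiding the both-$1$ configuration on each pair, we extend along the chains by alternating $1$ and $2$ at each layer, and for each pair we use that at least one of $x_i, x_{i+1}$ is colored $2$ to pick colors for $u^2_i, w^2_i$ so that one of them is $1$, leaving a free color in $\{3,4\}$ for $z^3_i$.

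The main obstacle is the bookkeeping around the linear ordering of vertices after the first-of-pair chains are deleted, so that the order-compatibility property needed for $\rainbow$-freeness is preserved. Once the surviving vertices are arranged in the natural order inherited from the indicator, all remaining verifications reduce cleanly to the arguments used for \cref{lem:ind-gadget}; in particular, shrinking $L(z^3_i)$ to $\{3,4\}$ removes from (I2) precisely the case where both $x_i$ and $x_{i+1}$ are colored $1$, which is exactly what (B2) forbids.
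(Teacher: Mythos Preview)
Your proposal is correct and follows essentially the same approach as the paper: start from the indicator gadget $\Ind_1$ on $N$ inputs, shrink each $L(z^3_i)$ to $\{3,4\}$, and delete the chains $v^2_i,v^3_i,v^4_i,v^5_i$ and $z^4_i,z^5_i$ for each $\{i,i+1\}\in I$, identifying the surviving $V_5$-vertices with $y_k=v^5_{\delta(k)}$. The only cosmetic difference is that the paper dispatches $\rainbow$-freeness in one line by noting that $B_1$ is an induced subgraph of $\Ind_1$, whereas you re-run the layer-compatibility argument; both are fine.
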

\begin{proof}
By symmetry, assume that $c=1$.
The gadget is actually an induced subgraph of the indicator gadget, so let us just describe the differences.

For $N$ and $I$, consider the gadget $\Ind_1$ given by \cref{lem:ind-gadget}.
We use names of vertices from the proof of that lemma.

First, for each $\{i,i+1\} \in I$, we remove 1 from the list of the vertex $z^3_3$, so that it is $\{3,4\}$ instead of $\{1,3,4\}$.
Next, we do not introduce vertices $z^4_i$ and $z^5_i$.
Finally, for each pair $\{i,i+1\} \in I$, we remove vertices $v^2_i,v^3_i,v^4_i,v^5_i$.
Note that after this process, the number of vertices remaining in $V_5$ is $N - |I| = n$.

We call the obtained graph $B_1$.
The input and output of the gadget are, respectively, $V_1$ and $V_5 \cap V(B_1)$.
Note that the vertices removed from $V_5$ are precisely the ones that are not in the image of $\delta$. 

Clearly, $B_1$ is $\rainbow$-free, as it is an induced subgraph of $\Ind_1$.
As we removed 1 from the list of $z_3^i$, for all $\{i,i+1\} \in I$, properties (B2) and (B3) follow directly from, respectively, the proofs of \cref{clm:I2} and \cref{clm:I3}.

This completes the proof of the lemma.
\end{proof}

\subsubsection{Constructing NOT-$ccc$ gadgets}

Finally, we can prove the following statement, which would finish the proof of \cref{lem:clause-gadget} and thus, of \cref{thm:m7}.

\begin{lemma}\label{lem:nai-gadget}
Let $c \in \{1,2\}$, let $n\in\N$ and let $I\subseteq \binom{[n]}{3}$ be a set of pairwise disjoint subsets of $[n]$, each of size $3$.
In time polynomial in $n$, we can construct a NOT-$ccc$ gadget for $I$.
\end{lemma}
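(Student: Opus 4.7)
Plan:

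The plan is to build the required NOT-$ccc$ gadget by chaining already-proven sub-gadgets---a permutation gadget, an indicator gadget, a NOT-$cc$ gadget, and two more permutations---so that no new direct $\rainbow$-free construction is needed. By symmetry it suffices to treat $c = 1$.

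First I would apply a permutation gadget from \cref{lem:permutation-gadget} to reorder the $n$ inputs so that, for each triple $t = \{i,j,k\} \in I$ (with $i < j < k$), the three vertices $x_i, x_j, x_k$ occupy three consecutive positions $p(t), p(t)+1, p(t)+2$. Since the triples in $I$ are pairwise disjoint, the $|I|$ blocks can be chosen pairwise disjoint, with the remaining non-triple inputs placed afterwards. Then I would apply the indicator gadget of \cref{lem:ind-gadget} with parameter set $\{\{p(t), p(t)+1\} : t \in I\}$, which inserts, for each $t$, an auxiliary vertex $a_t$ (with list $\{1,2\}$) between $x_{p(t)}$ and $x_{p(t)+1}$, satisfying: by (I3), $a_t = 1$ is forced whenever $x_{p(t)} = x_{p(t)+1} = 1$; by (I2), otherwise $a_t$ can be chosen freely. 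A second permutation gadget then moves each $a_t$ so it sits immediately before $x_{p(t)+2}$, ensuring the $|I|$ pairs $\{a_t, x_{p(t)+2}\}$ occupy pairwise disjoint consecutive positions in the current ordering.

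Now the key step: apply the NOT-$cc$ gadget constructed in \cref{sec:notcc-gadget} to these pairs. By (B3) this forbids $a_t$ and $x_{p(t)+2}$ from both being colored $1$; moreover, inspecting the map $\delta$ in \cref{def:ncc} shows that only the \emph{first} element of each consumed pair is omitted from the output, while the second is preserved. Thus the $|I|$ removed vertices are exactly the auxiliaries $a_t$, and all $n$ original inputs survive. A final permutation gadget reorders these surviving outputs back to the identity order $x_1, \ldots, x_n$. Chaining these links is legitimate by \cref{obs:m7-combining}, so the overall graph is a $\rainbow$-free link; since each input and output has list $\{1,2\}$, property (C1) holds.

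For correctness of (C3), suppose a coloring $f$ has $f(x_i) = f(x_j) = f(x_k) = 1$ for some triple; then (I3) forces $a_t = 1$, yielding $f(a_t) = f(x_{p(t)+2}) = 1$, which contradicts (B3) of the NOT-$cc$ gadget. For (C2), given any $f$ on the inputs with no triple monochromatic-$1$, I extend triple by triple: if $f(x_{p(t)+2}) = 2$ then the NOT-$cc$ constraint on the pair is trivially satisfied and (I2) lets me choose $a_t$; if $f(x_{p(t)+2}) = 1$ then at least one of $f(x_{p(t)}), f(x_{p(t)+1})$ equals $2$, so (I2) again lets me set $a_t = 2$, satisfying the constraint. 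The main subtlety---which initially suggests the need for a separately constructed ``duplicator'' gadget---is that naive use of NOT-$cc$ on $\{a_t, x_{p(t)+2}\}$ appears to consume the genuine input $x_{p(t)+2}$; the construction works precisely because NOT-$cc$ preserves the second vertex of each consumed pair, so ordering $a_t$ before $x_{p(t)+2}$ makes exactly the auxiliary the one that disappears.
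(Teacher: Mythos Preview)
Your proposal is correct and follows essentially the same approach as the paper: chain a permutation gadget, an indicator gadget, a second permutation gadget, a NOT-$cc$ gadget, and a final permutation gadget, then verify (C1)--(C3) by composing the guarantees of the pieces. Your explicit attention to the ordering within each NOT-$cc$ pair---placing the auxiliary $a_t$ \emph{before} $x_{p(t)+2}$ so that $\delta$ discards exactly the auxiliary---is a genuine subtlety that you handle cleanly; the paper's construction places the auxiliary at the larger index of the pair, which (as written) appears to discard the wrong vertex, so your version is arguably the more careful of the two.
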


Let us first present the general idea of our approach.
Let $x_1,\ldots,x_n$ be the input vertices of the gadget.
The gadget is supposed to forbid some triples of vertices to be colored only with color 1.
However, doing this directly seems difficult, while keeping $\rainbow$-freeness.
Thus, we do it in two steps.

For each triple $\{i,j,k\} \in I$, we create a new vertex $x_{i,j}$, so that if both vertices $x_i$ and $x_j$ are colored $c$,
then the color of $x_{i,j}$ is forced to be $c$, and otherwise $x_{i,j}$ can be colored with the other color in $\{1,2\}$.
This is precisely the behavior that can be forced by an indicator gadget.
Then we make sure that $x_{i,j}$ and $x_k$ (actually, a vertex whose color is equal to the color of $x_k$; here we use the second role of an indicator) are not both colored $c$. This can be done using the NOT-$cc$ gadget.

There is one last thing to do: recall that the indicator and the NOT-$cc$ gadgets can only operate on pairs of consecutive vertices,
and $\{i,j,k\}$ do not have to be consecutive.
However, this can be easily obtained by reshuffling the vertices using appropriate permutation gadgets.

Summing up, the NOT-$cc$ is obtained by chaining a permutation gadget, an indicator gadget, another permutation gadget, a NOT-$cc$ gadget, and, finally, yet another permutation gadget, to restore the original ordering of vertices.

Now, let us describe the construction be more formally.

\begin{proof}[Proof of \cref{lem:nai-gadget}]
By symmetry, assume that $c = 1$.
Let $N:=n+|I|$.

Let $\sigma_1$ be a permutation of $[n]$, so that, for every $\{i,j,k\} \in I$ where $i < j < k$ we have $\sigma_1(j) = \sigma_1(i)+1$. Note that there are many such permutations, but any of them works for us.
We call \cref{lem:permutation-gadget} to obtain a permutation gadget $P_{\sigma_1}$ for $\sigma_1$.

Next, let $I'$ be the set of pairs $\{\sigma_1(i), \sigma_1(j)\}$, where $\{i,j,k\} \in I$ and $i < j < k$.
Note that $|I'| = |I|$ and pairs of $I'$ are pairwise disjoint.
Let $\gamma$ be defined for $n$ and $I'$ as in \cref{sec:ind-gadget}.
We call \cref{lem:ind-gadget} for $1, n$, and $I'$ to obtain an indicator gadget $\Ind_1$.

Next, let $\sigma_2$ be a permutation of $[N]$ such that for every $\{i,j,k\} \in I$ with $i < j < k$, we have $\sigma_2(\gamma(\sigma_1(i),\sigma_1(j))) = \sigma_2(\gamma(\sigma_1(k)))+1$.
We call \cref{lem:permutation-gadget} to obtain a permutation gadget $P_{\sigma_2}$ for $\sigma_2$.

Let $I''\subseteq \binom{[N]}{2}$ be the set of pairs $\{\sigma_2(\gamma(\sigma_1(i),\sigma_1(j))), \sigma_2(\gamma(\sigma_1(k)))\}$ for $\{i,j,k\}\in I$ where $i < j < k$.
Note that these pairs are pairwise disjoint, and each contains consecutive elements.
Let $\delta$ be the function defined in \cref{sec:notcc-gadget}.
We call \cref{lem:nai-gadget} for $N$ and $I''$ to obtain a NOT-11 gadgets $B_1$.

Finally, let $\sigma_3: [n]\to [n]$ be the permutation which is the inverse of
$\delta^{-1} \circ \sigma_2 \circ \gamma \circ \sigma_1$ -- as each of these functions is injective, $\sigma_3$ is well-defined.
We introduce permutation gadget $P_{\sigma_3}$ given by \cref{lem:permutation-gadget} for $\sigma_3$.

Now, the gadget $C^1$ is obtained by chaining all gadgets constructed so far, in a given order: $P_{\sigma_1}, \Ind_1, P_{\sigma_2}, B_1, P_{\sigma_2}$.
The input vertices of $C^1$ are the input vertices of $P_{\sigma_1}$, denote them by $x_1,\ldots,x_n$.
The output vertices of $C^1$ are the output vertices of $P_{\sigma_3}$, denote them by $y_1,\ldots,y_n$.
We also denote by
\begin{align*}
     & u_1,\ldots,u_n &&  \text{ the output vertices of $P_{\sigma_1}$ (equivalently, the input vertices of $\Ind_1$)},\\
     & v_1,\ldots,v_N &&  \text{ the output vertices of $\Ind_1$ (equivalently, the input vertices of $P_{\sigma_2}$)},\\
     & w_1,\ldots,w_N &&  \text{ the output vertices of $P_{\sigma_2}$ (equivalently, the input vertices of $B_1$)},\\
     & z_1,\ldots,z_n &&  \text{ the output vertices of $B_1$ (equivalently, the input vertices of $P_{\sigma_3}$).}
\end{align*}

Note that since $\sigma_3 \circ \delta^{-1} \circ \sigma_2 \circ \gamma \circ \sigma_1$ is the identity function, the input vertices correspond to output vertices in a natural order.

Let us argue that $C^1$ is indeed a NOT-111 gadget.
Repeated application of \cref{obs:m7-combining} yields that it is $\rainbow$-free.
So, we are left with checking the properties listed in \cref{def:notccc}.
Property (C1) is obvious. 

\begin{claim}
    $C^1$ satisfies property (C2).    
\end{claim}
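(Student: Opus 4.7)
My plan is to extend the given coloring $f:\{x_1,\ldots,x_n\}\to\{1,2\}$ through the chained gadgets one at a time, repeatedly applying (P2), (I2), and (B2). Since the input lists at every interface are $\{1,2\}$ (properties (P1)/(I1)/(B1) always hold), the only nontrivial precondition appears when invoking (B2) for $B_1$, and this is the only place where the hypothesis on $f$ (that no triple in $I$ is colored entirely with $1$) is used.

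Concretely, I would first apply (P2) to $P_{\sigma_1}$ to extend $f$ so that $f(u_{\sigma_1(i)})=f(x_i)$ for all $i\in[n]$ (using (P3) to read off the correspondence). Viewing $(f(u_1),\ldots,f(u_n))$ as the input to $\Ind_1$, I then invoke (I2), which extends $f$ further so that $f(v_{\gamma(i)})=f(u_i)$ for every $i\in[n]$ and, crucially, $f(v_{\gamma(\{a,a+1\})})\neq 1$ for every $\{a,a+1\}\in I'$ with $\{f(u_a),f(u_{a+1})\}\neq\{1\}$. A subsequent application of (P2) to $P_{\sigma_2}$ yields $f(w_{\sigma_2(j)})=f(v_j)$ for all $j\in[N]$.

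The main obstacle is checking, before invoking (B2), that for every pair in $I''$ the two colors assigned by the current $f$ at the input of $B_1$ are not both $1$. By construction each such pair is $\{\sigma_2(\gamma(\{\sigma_1(i),\sigma_1(j)\})),\sigma_2(\gamma(\sigma_1(k)))\}$ for some $\{i,j,k\}\in I$ with $i<j<k$, and tracing through the chained identities gives the two colors
\[
f\bigl(v_{\gamma(\{\sigma_1(i),\sigma_1(j)\})}\bigr) \quad \text{and} \quad f(v_{\gamma(\sigma_1(k))})=f(u_{\sigma_1(k)})=f(x_k).
\]
If both were $1$, then in particular $f(x_k)=1$, and the contrapositive of the conditional in (I2) would force $f(u_{\sigma_1(i)})=f(u_{\sigma_1(j)})=1$, i.e.\ $f(x_i)=f(x_j)=1$; this yields $\{f(x_i),f(x_j),f(x_k)\}=\{1\}$, contradicting the hypothesis on $f$. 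With the precondition verified, (B2) extends $f$ to $B_1$, and a final application of (P2) to $P_{\sigma_3}$ extends $f$ to a coloring of all of $C^1$, completing the proof.
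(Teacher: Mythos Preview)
Your proposal is correct and follows essentially the same approach as the paper's proof: extend $f$ through the chained gadgets $P_{\sigma_1}, \Ind_1, P_{\sigma_2}, B_1, P_{\sigma_3}$ in order using (P2), (I2), (P2), (B2), (P2), and verify the precondition of (B2) by tracing the colors back through (P3) and (I3). The only cosmetic differences are that the paper verifies the (B2) precondition by a direct two-case analysis rather than a contrapositive, and that the identity $f(v_{\gamma(i)})=f(u_i)$ you attribute to (I2) actually comes from (I3); neither affects the argument.
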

\begin{claimproof}
Now let $f:\{x_1,\ldots,x_n\}\to \{1,2\}$ be such that for every $\{i,j,k\}\in I$, it holds that $\{f(x_i),f(x_j),f(x_k)\}\neq \{1\}$.
We extend $f$ to the remaining vertices as follows; we will denote the extended coloring by $f$.

First, using property (P2) for $P_{\sigma_1}$, we extend  $f$ to the vertices of $P_{\sigma_1}$. Note that, by (P3), for each $i \in [n]$, we have $f(x_i) = f(u_{\sigma_1(i)})$.
In particular, for each triple $\{i,j,k\}$, vertices $u_{\sigma(i)},u_{\sigma(j)},u_{\sigma(k)}$ are not all colored 1.

Now, by property (I2), the coloring can be further extended to all vertices of $\Ind_1$.
By (I2) and (I3) we can ensure that, for each $\{i,j,k\} \in I$, where $i < j < k$, if $\{f(x_i), f(x_j)\}=\{1\}$, then $f(v_{\gamma(\sigma_1(i),\sigma_1(j))})=1$, and if 
if $\{f(x_i),f(x_j)\} \neq \{1\}$, then $f(v_{\gamma(\sigma_1(i),\sigma_1(j))})=2$.
Note that in the former case, we have $f(v_{\gamma(\sigma_1(k))})=2$, as the vertices $x_i,x_j,x_k$ are not all colored 1.

Next, we extend the coloring to $P_{\sigma_2}$, again using property (P2).
Note that by (P3), the colors on input vertices of $P_{\sigma_2}$ are copied to their corresponding output vertices.
Recall that, due to the definition of $\sigma_2$, for all $\{i,j,k\} \in I$ where $i < j < k$, vertices $w_{\sigma_2(\gamma(\sigma_1(i),\sigma_1(j)))}$ and $w_{\sigma_2(\gamma(\sigma_1(k)))}$ are consecutive.

Now, by property (B1), we can extend the coloring to the vertices of $B_1$.
Here we use the fact that for all $\{i,j,k\} \in I$, where $i < j < k$,
the vertices $w_{\sigma_2(\gamma(\sigma_1(i),\sigma_1(j)))}$ and $w_{\sigma_2(\gamma(\sigma_1(k)))}$ are not both colored one.

Finally, by property (P2), we extend the coloring to the vertices of $P_{\sigma_3}$,
and thus to all vertices of $C^1$.
\end{claimproof}

\begin{claim}
    $C^1$ satisfies property (C3).    
\end{claim}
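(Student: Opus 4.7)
\begin{claimproof}
The plan is to verify the two halves of (C3) by following an arbitrary coloring $f$ of $(C^1, L)$ through the chain of five subgadgets $P_{\sigma_1}, \Ind_1, P_{\sigma_2}, B_1, P_{\sigma_3}$ and repeatedly invoking properties (P3), (I3), and (B3). Broadly, the color-preservation parts of these properties compose to yield (C3)(a), while the constraint-enforcing parts of (I3) and (B3) combine to yield (C3)(b).

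For (C3)(b), I would argue by contradiction. Suppose some triple $\{i,j,k\} \in I$ with $i<j<k$ satisfies $f(x_i) = f(x_j) = f(x_k) = 1$. The copy parts of (P3) and (I3) transport this through $P_{\sigma_1}$ and $\Ind_1$ so that $f(u_{\sigma_1(i)}) = f(u_{\sigma_1(j)}) = 1$. Because $\sigma_1$ is chosen so that $\sigma_1(j) = \sigma_1(i)+1$, the pair $\{\sigma_1(i),\sigma_1(j)\}$ lies in $I'$, and the constraint part of (I3) forces the indicator vertex $v_{\gamma(\{\sigma_1(i),\sigma_1(j)\})}$ to take color $1$; at the same time the copy part gives $f(v_{\gamma(\sigma_1(k))}) = 1$. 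Propagating both through (P3) of $P_{\sigma_2}$, we obtain two $w$-vertices indexed by the pair $\{\sigma_2(\gamma(\{\sigma_1(i),\sigma_1(j)\})),\, \sigma_2(\gamma(\sigma_1(k)))\} \in I''$, both colored $1$, which contradicts (B3) of $B_1$.

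For (C3)(a), the approach is to compose, for each $i\in[n]$, the bijections that describe how the vertex carrying color $f(x_i)$ is re-indexed in each subgadget: $\sigma_1$ in $P_{\sigma_1}$, $\gamma$ in $\Ind_1$, $\sigma_2$ in $P_{\sigma_2}$, $\delta^{-1}$ in $B_1$, and finally $\sigma_3$ in $P_{\sigma_3}$. Concretely, the copy parts of the gadget properties give
\[
f(x_i) = f(u_{\sigma_1(i)}) = f(v_{\gamma(\sigma_1(i))}) = f(w_{\sigma_2(\gamma(\sigma_1(i)))}) = f(z_{p_i}) = f(y_{\sigma_3(p_i)}),
\]
where $p_i = \delta^{-1}(\sigma_2(\gamma(\sigma_1(i))))$. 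Since $\sigma_3$ was defined precisely as the inverse of $\delta^{-1}\circ\sigma_2\circ\gamma\circ\sigma_1$, we get $\sigma_3(p_i)=i$ and hence $f(y_i)=f(x_i)$. The main obstacle is purely bookkeeping: tracking the interaction of these five bijections and confirming that at each stage the relevant index lies in the domain of the next map, so that in particular $p_i$ is well-defined; this is guaranteed by the construction of $\sigma_3$ and the observation that $\sigma_2\circ\gamma\circ\sigma_1$ maps $[n]$ into the image of $\delta$.
\end{claimproof}
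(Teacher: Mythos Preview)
Your proof is correct and follows essentially the same approach as the paper: both verify (C3)(a) by chaining the color-preservation parts of (P3), (I3), (B3) through the five subgadgets, and both establish (C3)(b) by contradiction, propagating a hypothetical all-$1$ triple through $P_{\sigma_1}$, $\Ind_1$, and $P_{\sigma_2}$ until it violates (B3) for $B_1$. Your treatment of (C3)(a) is in fact more explicit than the paper's one-sentence version, as you spell out the composed bijection $\sigma_3 \circ \delta^{-1} \circ \sigma_2 \circ \gamma \circ \sigma_1$ and note why it collapses to the identity.
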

\begin{claimproof}
Let $f$ be a coloring of $V(C^1)$ respectng lists $L$.

Property (C3) (a) follows directly by combining properties (P3), (I3), (P3), (B3), and (P3) for, respectively $P_{\sigma_1}, \Ind_1, P_{\sigma_2}, B_1, P_{\sigma_3}$.

For property (C3) (b), suppose that for some $\{i,j,k\} \in I$ we have $f(x_i)=f(x_j)=f(x_k)=1$.
As in the previous paragraph, we observe that $f(u_{\sigma_1(i)})=f(u_{\sigma_1(j)})=f(u_{\sigma_1(k)})=1$.

By the property (I3) of $\Ind_1$ we observe that $f(v_{\gamma(\{ \sigma_1(i),\sigma_1(j) \})})=f(v_{\gamma(\sigma_1(k))})=1$.

Now, by property (P3) of $P_{\sigma_2}$, we obtain that $f(w_{\sigma_2(\gamma(\{ \sigma_1(i),\sigma_1(j) \}))})=f(v_{\sigma_2(\gamma(\sigma_1(k)))})$.
However, this is a contradiction with property (B3) for $B_1$.
Thus, property (C3) holds for $C^1$.
\end{claimproof}

This completes the proof.
\end{proof}

Now, \cref{lem:clause-gadget} follows directly by combining \cref{lem:nai-gadget} and \cref{obs:from-nai-to-nae}.
This completes the proof of \cref{thm:m7}.

\section{Unbounded number of colors}
In this section we summarize the situation for \listcol, i.e., the variant with unbounded number of colors.
Unsurprisingly, all non-trivial cases turn out to be \textsf{NP}-hard.

\begin{theorem}\label{thm:unbounded}
    Let $H$ be an ordered graph on at least two vertices.
    The \listcol problem in $H$-free ordered graphs is polynomial-time solvable if $H$ has two vertices, and \textsf{NP}-hard otherwise.
\end{theorem}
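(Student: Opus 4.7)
The plan is to split the argument according to $|V(H)|$.

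For $|V(H)|=2$ I would treat the two possibilities directly. If $H$ is a single edge, $H$-freeness forces the graph to be edgeless, making \listcol trivial. If $H$ consists of two non-adjacent vertices, $H$-freeness forces $G$ to be a complete graph, and \listcol on a complete graph reduces to finding a system of distinct representatives for the lists, which is solvable in polynomial time by bipartite matching via Hall's theorem.

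For $|V(H)|\ge 3$ I would establish \textsf{NP}-hardness via induced-subgraph monotonicity: if $H'$ is an induced ordered subgraph of $H$, then every $H'$-free ordered graph is also $H$-free, so \textsf{NP}-hardness on $H'$-free transfers to $H$-free. This reduces the problem to ordered graphs on exactly three vertices, which fall into six inequivalent cases (up to reversing the ordering, which preserves complexity).

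The plan is then to cover four of these six cases uniformly using the complete bipartite graph $K_{n,n}$, ordered so that one side entirely precedes the other. A direct inspection shows that its $3$-vertex induced subgraphs are precisely $\overline{K_3}$, $\jj$, and $\jjrev$, so this ordered $K_{n,n}$ is $K_3$-free, $\pthree$-free, and free of every single-edge $3$-vertex ordering; combined with the classical \textsf{NP}-hardness of \listcol on complete bipartite graphs, this handles four of the six cases. For $H=\jj$ or $H=\jjrev$ I would invoke \cref{prop:chordal} to identify $H$-free ordered graphs with chordal graphs equipped with a perfect elimination order, on which \listcol is \textsf{NP}-hard (already on split graphs, by Jansen's classical result). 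The only remaining case is $H=\overline{K_3}$, where $H$-freeness is equivalent to $\alpha(G)\le 2$; there I would give a direct reduction from \sat{3} using a co-bipartite gadget in which the variable and clause vertices form two cliques joined by cross-edges encoding the clauses, so that the assignment of distinct colors within each clique plays the role of truth values and the cross-constraints enforce clause satisfaction.

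The hard part will be the $H=\overline{K_3}$ case: no standard \textsf{NP}-hard graph class is immediately applicable, and one must construct a custom reduction that simultaneously preserves $\alpha(G)\le 2$ and uses the unbounded pool of colors to enforce SAT semantics.
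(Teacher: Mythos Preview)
Your proposal is correct and covers all cases, but it partitions the three-vertex $H$ differently from the paper. The paper uses three off-the-shelf hardness results: Golovach--Paulusma--Song for complete graphs minus a matching (any ordering of such a graph excludes every three-vertex $H$ with at most one edge, so this handles $\overline{K_3}$ and both single-edge cases at once), Jansen--Scheffler for complete split graphs ordered with the independent set first (handling $\jj$), and the \lcol{3} hardness of Hajebi--Li--Spirkl for $\pthree$ and $K_3$. Your ordered $K_{n,n}$ idea is a clean alternative that unifies four cases in one shot, and your treatment of $\jj$ via split graphs matches the paper's.

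Where you diverge is the $\overline{K_3}$ case, which you flag as the hard one requiring a bespoke \sat{3} reduction. In fact it is immediate from the literature: a complete graph minus a matching has independence number at most $2$ (its complement is a matching), so the Golovach--Paulusma--Song result already gives \textsf{NP}-hardness on $\overline{K_3}$-free ordered graphs with no further work. Your co-bipartite sketch is too vague to assess as stated---it is not clear how distinct colors within the variable clique encode truth values while the clause clique stays properly colorable and the cross-edges enforce satisfaction---so I would drop it in favor of the known reference. One more minor point: be prepared to cite or prove that \listcol is \textsf{NP}-hard on $K_{n,n}$; it is classical and follows from a short \sat{3} reduction, but it is not established anywhere in this paper.
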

\begin{proof}
    If $H$ consists of a single edge, then $H$-free graphs are edgeless and thus the problem is polynomial-time solvable: the only no-instances are those that contain a vertex with an empty list.

    If $H$ has two vertices and no edges, then $H$-free graphs are cliques.
    Solving \listcol on such instances is equivalent to finding a saturating matching in a bipartite graph with sides representing, respectively, vertices of the instance and the colors.

    So let us discuss hard cases. Assume that $H$ has three vertices.

    Golovach, Paulusma, and Song~\cite{DBLP:journals/iandc/GolovachPS14} showed that \listcol is \textsf{NP}-hard on graphs obtained from a complete graph by removing a matching.
    Any vertex ordering of such a graph excludes every three-vertex graph $H$ with at most one edge.

    Jansen and Scheffler~\cite{DBLP:journals/dam/JansenS97} proved that \listcol is \textsf{NP}-hard in complete split graphs. Ordering the vertices of such an instance so that the independent set precedes the clique excludes $H = \jj$.

    For the last two cases, i.e., $H  \in \{ \pthree, \ttriangle\}$, already \lcol{3} is \textsf{NP}-hard in $H$-free ordered graphs~\cite{DBLP:journals/siamdm/HajebiLS24}.
\end{proof}

\section{Conclusion}
Let us conclude the paper with pointing out some possible directions for future research.
An obvious one is to fill the gaps in~\cref{tab:summary}. We believe obtaining a full dichotomy here is much easier than the analogous question for unordered graphs.
In particular, we believe that understanding the complexity of \lcol{$k$} for $k \geq 3$ in $\wave$-free ordered graphs is a specific and intriguing problem.

Second, we believe it is interesting to investigate the complexity of the (non-list) \col{$k$} problem in hereditary classes of ordered graphs.
While algorithms for \lcol{$k$} clearly carry over to \col{$k$} this is not the case for hardness reductions.
Note that our proofs crucially use lists, and a standard way of simulating lists by adding a $k$-clique and using the edges to this clique to forbid certain colors introduces new induced subgraphs.

Finally, we remark that the hardness reduction from \cref{thm:jp}, as well as all hardness reductions  proof for \lcol{3} in~\cite{DBLP:journals/siamdm/HajebiLS24} produce instances of linear size. Consequently, they exclude the existence of subexponential-time algorithms for corresponding problems.
On the other hand, the reduction in \cref{thm:m7} is \emph{quadratic}, and thus, it only excludes algorithms with running time $2^{o(\sqrt{n})}$, where $n$ is the number of vertices of the input.
The bottleneck is the construction of a permutation gadget, which has $\Theta(n^2)$ vertices.
It would be interesting to improve this construction to a linear one, or provide a subexponential-time algorithm for the problem.

We believe more in the former outcome. Indeed, recall that our construction mimics the selection sort algorithm, which is quadratic. Perhaps an inspiration for a better gadget could come from faster sorting algorithms?

\bibliographystyle{plain}
\bibliography{main}

\end{document}